 \newtheorem{thm}{Theorem}[section]
 \newtheorem{lem}[thm]{Lemma}
 \theoremstyle{definition}
 \newtheorem{rem}[thm]{Remark}
 \numberwithin{equation}{section}
\theoremstyle{definition}
\theoremstyle{remark}
\begin{document}
\title{Global Well-posedness of Incompressible Elastodynamics in Two Dimensions}
\author{Zhen
 Lei\footnote{School of Mathematical Sciences; LMNS and Shanghai
 Key Laboratory for Contemporary Applied Mathematics, Fudan University, Shanghai 200433, P. R.China and
 Institute for Advanced Study, Princeton University, NJ 08540, USA. {\it Email:
 leizhn@gmail.com, leizhn@math.ias.edu}}}
\date{Fudan University and Institute for Advanced Study}
\maketitle

\begin{abstract}
We prove that for sufficiently small initial displacements in some
weighted Sobolev space, the Cauchy problem of the systems of incompressible isotropic Hookean elastodynamics in two space dimensions
admits a uniqueness global classical solution.
\end{abstract}

% With AMS-LaTeX, \maketitle follows the abstract
\maketitle

%%      ---------------------------------------------------------------------
%%      ------------------- TABLE OF CONTENTS (OPTIONAL) --------------------
%%      ---------------------------------------------------------------------

% \tableofcontents

%%      ---------------------------------------------------------------------
%%      ---------------------------- BODY OF PAPER --------------------------
%%      ---------------------------------------------------------------------

%%      Please input or insert the body of your paper here.

\section{Introduction}

This paper considers the global existence of  classical solutions to
the Cauchy problem in incompressible nonlinear elastodynamics. The
elastic body is assumed to be homogeneous, isotropic and
hyperelastic. The systems of equations describing the motion exhibit
a nonlocal nature when one solves the pressure by inverting a Laplacian. The linearized
system turns out to be of wave type. We exploit the fact that the
nonlinearities in the systems of incompressible isotropic Hookean\footnote{The general case will be treated in a forthcoming paper using similar ideas as those in this article.}
elastodynamics are \textit{inherently strong linearly degenerate}
and automatically satisfy a \textit{strong null condition}.
% (for the general non-Kookean case, see Remark \ref{rem}).
We prove the global
existence of classical solutions to this Cauchy problem in the
two-dimensional case for small initial displacements in a certain
weighted Sobolev space.

To place our result in context, we review a few highlights from the
existence theory of nonlinear wave equations and elastodynamics. If
the spatial dimension is no bigger than three, the global existence
of these equations hinges on two basic assumptions (see \cite{Sideris00}): the initial data
being sufficiently small and the nonlinearities satisfying a type of
null condition. The absence of either of these conditions may lead
to the finite time blowup of solutions. In particular, for 3D
compressible elastodynamics, John \cite{John84} proved the formation of
finite time singularities for
arbitrarily small spherically symmetric displacements without the null condition. On the other hand,
Tahvildar-Zadeh \cite{T98} proved the formation
of singularities for large displacements. For nonlinear wave equations with
sufficiently small initial data but without the null condition, the
finite time blowup was shown by John \cite{John81} and Alinhac
\cite{Alinhac00b} in 3D, and Alinhac \cite{Alinhac99a, Alinhac99b,
Alinhac01} in 2D. We remark that the 2D case is highly nontrivial. See also some related classical results by Sideris \cite{Sideris1,Sideris2}.

From now on, we will always assume that the initial data is
sufficiently small in certain weighted Sobolev space since we are
concerned with the long-time behavior of nonlinear elastodynamics
and nonlinear wave equations. The first non-trivial long time
existence result may be the one by John and Klainerman
\cite{JohnKlainerman84} where the almost global existence theory is
obtained for  3D quasilinear scalar wave equation. In the seminal
work \cite{Klainerman85}, Klainerman introduced the vector field
theory and the so-called generalized energy method based on the
scaling, rotation and Lorentz invariant properties of wave operator,
providing a general framework for studying nonlinear wave equations.
Then in \cite{Klainerman86}, Klainerman  proved the global existence
of classical solutions for 3D scalar quasilinear wave equations under
the assumptions that the nonlinearity satisfies the null condition.
This landmark work was also obtained independently by Christodoulou
\cite{Christodoulou86} using a conformal mapping method. We also
mention that John \cite{John88} established the almost global
existence theory for 3D compressible elastodynamics via an
$L^1-L^\infty$ estimate.

The generalized energy method of Klainerman can be adapted to prove
almost global existence for certain nonrelativistic systems of 3-D
nonlinear wave equations, using Klainerman-Sideris's weighted $L^2$
energy estimate which only involves the scaling and rotation
invariance of the system, as was first done in \cite{KS96}. This
approach was subsequently developed to obtain the global existence under
the null condition in \cite{SiderisTu01}, see also \cite{Yokoyama}
for a different method. Of particular importance is that Sideris
\cite{Sideris00} (independently by Agemi \cite{Agemi00}, see also an
earlier result \cite{Sideris96}) proved the global existence of
classical solutions to the 3D compressible elastodynamics under a
null condition. For 3D incompressible elastodynamics, the Hookean
part of the system is inherently degenerate and satisfies a null
condition. The global existence was established by Sideris and
Thomases in \cite{ST05, ST07}. We would like to point out that a unified
treatment for obtaining weighted $L^2$ estimates (of second order
derivatives of unknowns) for certain 3D hyperbolic systems appeared
in \cite{ST06}.

As having been pointed out in  \cite{LSZ13}, the existence question is more delicate in 2-D case because, even
under the assumption of the null condition, quadratic nonlinearities
have at most critical time decay. A series of articles considered
the case of cubically nonlinear equations satisfying the null
condition, see for example \cite{Hoshiga,Katayama,L95}. Alinhac
\cite{Alinhac00} was the first to establish the global existence for 2D
scalar wave equation with null bilinear forms. His argument combines
vector fields  with what he called the ghost energy method. We
emphasis that Alinhac took the advantage of the Lorentz
invariance of the system. In particular, at the time of this
writing, under null conditions, the global well-posedness of
following problems (for which Lorentz invariance is not available)
still remains widely open:
\begin{itemize}\nonumber
\item Nonlinear nonrelativistic wave
systems in two dimension.
\item Systems of nonlinear elastodynamics in two dimension.
\end{itemize}
The first non-trivial long time existence result concerning the
above two problems is the recent work by Lei, Sideris and Zhou
\cite{LSZ13}, in which the authors proved the almost global
existence for incompressible isotropic elastodynamics in 2D by
formulating the system in Eulerian coordinate.  Their proof is based on
Klainerman's generalized energy method, enhanced by Alinhac's ghost
energy method and Klainerman-Sideris's weighted $L^2$ energy method.
That is the first time to apply Alinhac's ghost weight method to the
case in which the Lorentz invariance is absent and the system is nonlocal. A
novel observation is the treatment of the term involving pressure
which is shown to enjoy a null structure. Unfortunately, at present, it seems
hard for us to improve the result in \cite{LSZ13} to be a global one
under the framework there.

In this paper, we prove the global existence of the incompressible
elasticity in two dimensions. To better illustrate our ideas and to
make the presentation as simple as it could be, we will only focus on
the typical Hookean case, and will treat the general case in another paper using similar ideas.  % (see Remark \ref{rem} for more descriptions).
Our proof here is a more structural one than
requiring involved technical tools. We first formulate the system in
Lagrangian coordinate. Let $p$ denote the pressure and $X$ the flow map. Let $Y(t, y) = X(t, y) - y$. The first observation is that the main part
of the main nonlinearity $\nabla\Gamma^\alpha p$ always contains a
term of $(\partial_t^2 - \Delta)\Gamma^\alpha Y$ or $(\partial_t + \partial_r)D\Gamma^\alpha Y$. Here $D$ denotes a space or time derivative and $\Gamma$ is a vector field which is defined in section 2, above \eqref{Elasticity-D}. This gives us the
so-called \textit{strong null condition}\footnote{When the usual \textit{null condition} is satisfied by a quasi-linear wave systems, the nonlinearities contain the term of  $(\partial_t + \partial_r)\Gamma^\alpha Y$ in general.} (we suggest to use this
terminology). When we perform the highest order
generalized energy estimate, at the first glance we always lose one
derivative if we bound the pressure term in $L^2$ (see Section
\ref{WE} and Section \ref{HEE}). A natural way to avoid this
difficulty may be introducing the new unknowns
$$U^\alpha = \Gamma^\alpha Y + (\nabla X)^{- \top}\nabla(-
\Delta)^{-1}\nabla\cdot[(\nabla X)^\top\Gamma^\alpha Y]$$ to
symmetrize the system. Unfortunately, this idea leads to complicated
calculations, and more essentially, it may not work at all. But
fortunately, the inherent strong null structure of nonlinearities
helps us to obtain a kind of estimate in which we gain one derivative
when estimating the $L^2$ norm of $\nabla\Gamma^\alpha p$ (see
Section \ref{HEE}). The price we pay here is to have less decay rate
in time, which will be overcome by applying Alinhac's ghost weight
as in \cite{Agemi00}. For the lower order energy estimate, our
observation is that instead of estimating the $L^2$ norm of
$D\Gamma^\alpha Y$, we turn to estimate its divergence-free part and
curl-free part. The divergence-free part is not a main problem. When
estimating the curl-free part of $\Gamma^\alpha Y$, the strong
linearly degenerate structure is present once again by appropriately
rewriting the system as the form in \eqref{Elasticity-L}. Then the
generalized energy estimate for the curl-free part of $\Gamma^\alpha
Y$ can be carried out with a subcritical time decay
which is $\langle t\rangle^{- \frac{3}{2}}$ (it is still not clear
for us whether the similar estimate is true in Eulerian coordinate).

As in \cite{KS96, Sideris00}, we need to estimate a kind of weighted
$L^2$ generalized energy norm. A technical difficulty here is to
control the weighted generalized $L^2$ energy $\mathcal{X}_\kappa$
in terms of the generalized energy $\mathcal{E}_\kappa$. For this
purpose, we will have to estimate the $r$-weighted null form of
$r(\partial_t^2 - \Delta)Y$, which seems not naively true since $r$
is not in $\mathcal{A}_p$ class of a zero-order Riesz operator for
$p = 2$ in two space dimension (See Lemma \ref{WE-1} for details).
We overcome this difficulty by considering a variety of the weighted
$L^2$ generalized energy of Klainerman-Sideris \cite{KS96} (see its
definition \eqref{Defn} in Section \ref{Equations}) so that we only
need to estimate $\langle t\rangle(\partial_t^2 - \Delta)Y$. A trick
here is that instead of a gain of one derivative as in performing
highest order energy estimate in section \ref{HEE}, we use the
strong null condition to gain suitable decay rate in time.

Before ending the introduction, let us mention some related works on
viscoelasticity, where there is viscosity in the momentum equation.
The global well-posedness with near equilibrium states in 2-D is first
obtained in \cite{LLZ2005}. The 3-D case
was obtained independently in  \cite{CZ2006} (see also the thesis
\cite{LeiS}) and \cite{LeiLZ08}. The initial boundary value problem
is considered in \cite{LL2008}, the compressible case can be found
in \cite{QZ2010, HW2011}. For more results near equilibrium, readers
are referred to the nice review paper by Lin \cite{Lin} and other
works in \cite{LZ2005, HX2010,ZF-1,ZF-2} as references. In \cite{Lei} a
class of large solutions in two space dimensions are established via
the strain-rotation decomposition (which is based on earlier results
in \cite{Lei2008} and \cite{Lei3}). In all of these works, the
initial data is restricted by the viscosity parameter. The work
\cite{Kessenich} was the first to establish global existence for 3-D
viscoelastic materials uniformly in the viscosity parameter. We also mention that Hao and Wang  recently established the local a priori estimate for the free boundary incompressible elastodynamics in \cite{HW}.

We will give a self-contained presentation for the whole proof. The
remaining part of this paper is organized as follows: In section
\ref{Equations} we will formulate the system of incompressible
elastodynamics in Lagrangian coordinate and present its basic
properties, then we introduce some notations and state the main
result of this paper. We will outline the main steps of the proof at
the end of this section. In section \ref{lemmas} we will prove some
weighted Sobolev imbedding inequalities, weighted $L^\infty$
estimate and a refined Sobolev inequality. We also
give the estimate for good derivatives and lay down a preliminary
step for estimating weighted generalized $L^2$ energy. Then we will
explore the estimate for strong null form in section \ref{WE}, and
at the end of that section we give the estimate for the weighted
$L^2$ energy. In section \ref{HEE} we present the highest order
generalized energy estimate. Then we perform the lower order
generalized energy estimate in section \ref{LEE}.

\begin{rem}
After posting this article on Arxiv (see  arXiv:1402.6605), the author was informed by Dr. Wang that he could give another proof of the main result which, as being claimed in Wang's paper, can improve the understanding of the behavior of solutions in different coordinates using a different approach and from the point of view in frequency space (see Xuecheng Wang, \textit{Global existence for the 2D incompressible isotropic elastodynamics for small initial data}, arXiv:1407.0453).
\end{rem}

\section{Equation and Its Basic Properties}\label{Equations}

In the incompressible case, the equations of elastodynamics are in
general more conveniently written as a first-order system with
constraints in Eulerian coordinates (see, for instance \cite{ST05,
ST07, LSZ13}). But we will formulate the system in Lagrangian
coordinate below.

For any given smooth flow map $X(t, y)$, we call it incompressible
if
\begin{equation}\nonumber
\int_{\Omega}dy = \int_{\Omega_t}dX,\quad \Omega_t = \{X(t, y)|y \in
\Omega\}
\end{equation}
for any smooth bounded connected domain $\Omega$. Clearly, the
incompressibility is equivalent to
\begin{equation}\nonumber
\det(\nabla X) \equiv 1.
\end{equation}
Denote
\begin{equation}\label{Perturb-fm}
X(t, y) = y + Y(t, y).
\end{equation}
Then we have
\begin{equation}\label{Struc-1}
\nabla\cdot Y = - \det(\nabla Y).
\end{equation}
Moreover, a simple calculation shows that
\begin{equation}\label{Inverse}
(\nabla X)^{- 1} = \begin{pmatrix} 1 + \partial_2Y^2 & -
\partial_1Y^2\\ - \partial_2Y^1 & 1 + \partial_1Y^1\end{pmatrix} = (\nabla\cdot X)I - (\nabla X)^\top.
\end{equation}
We remark that throughout this paper we use the following convention
$$(\nabla Y)_{ij} = \frac{\partial Y^i}{\partial y^j}.$$
We often use the following notations
\begin{equation}\nonumber
\omega = \frac{y}{r},\quad r = |y|,\quad \omega^\perp =
\begin{pmatrix}- \omega_2, \omega_1\end{pmatrix},\quad \nabla^\perp =
\begin{pmatrix}- \partial_2, \partial_1\end{pmatrix}.
\end{equation}

For homogeneous, isotropic and hyperelastic materials, the motion of
the elastic fluids is determined by the following Lagrangian
functional of flow maps:
\begin{eqnarray}\label{LaF}
\mathcal{L}(X; T, \Omega) &=&
\int_0^T\int_{\Omega}\big(\frac{1}{2}|\partial_tX(t, y)|^2 -
W(\nabla X(t, y))\\\nonumber && +\ p(t, y)\big[\det\big(\nabla X(t,
y)\big) - 1\big]\big)dydt.
\end{eqnarray}
Here $W \in C^\infty(GL_2, \mathbb{R}_+)$ is the strain energy
function which depends only on $F = \nabla X$ and $p(t, y)$ is a Lagrangian
multiplier which is used to force the flow maps to be
incompressible. We call that $X(t, y)$ is a critical point of
$\mathcal{L}$ if for a given $T \in (0, \infty)$ and  bounded smooth
connected domain $\Omega$, there holds
\begin{equation}\nonumber
\frac{d}{d\epsilon}\Big|_{\epsilon = 0}\mathcal{L}(X^\epsilon; T, \Omega) =
0
\end{equation}
for all $p \in C^1(\mathbb{R}_+\times \mathbb{R}^2 \rightarrow
\mathbb{R})$ and any one-parameter family of incompressible flow
maps $X^\epsilon \in C^1(\mathbb{R}_+ \times \mathbb{R}^2
\rightarrow \mathbb{R}^2)$ with $\frac{d}{dt}\Big|_{\epsilon =
0}X^\epsilon(t, y) = Z(t, y)$, $X^0(t, y) = X(t, y)$ and  $Z(0, y) =
Z(T, y) \equiv 0$ for all $y \in \Omega$,  $Z(t, y) \equiv 0$ for
all $t \in [0, T]$ and $y \in
\partial\Omega$.

Let us focus on the most simplest case, i.e., the Hookean case, in
which the strain energy functional is simply given by
$$W(\nabla X) = \frac{1}{2}|\nabla X|^2.$$
% The general case is commented in Remark \ref{rem}.
Clearly, the Euler-Lagrangian equation of \eqref{LaF} takes
\begin{equation}\label{B-9}
\begin{cases}
\partial_t^2Y - \Delta Y = - (\nabla X)^{- \top}\nabla p,\\[-4mm]\\
\nabla\cdot Y = - \det(\nabla Y).
\end{cases}
\end{equation}

Let us take a look at the invariance groups of system \eqref{B-9}.
Suppose that $X(t, y)$ is a critical point of $\mathcal{L}$ in
\eqref{LaF}. Clearly, $\widetilde{X}(t, y)$ are also critical points
of $\mathcal{L}$ in \eqref{LaF}, which are defined either by
\begin{equation}\label{RoT}
\widetilde{X}(t, y) = Q^TX(t, Q y),\quad \forall\quad Q = e^{\lambda
A},\ \ A = \begin{pmatrix}0 & - 1\\ 1 & 0\end{pmatrix},
\end{equation}
or by
\begin{equation}\label{SaT}
\widetilde{X}(t, y) = \lambda^{-1}X(\lambda t, \lambda y)
\end{equation}
for all $\lambda > 0$. As a result, one also has
\begin{equation}\label{2-1}
\begin{cases}
\partial_t^2\widetilde{Y} - \Delta \widetilde{Y} = - (\nabla \widetilde{X})^{- \top}\nabla \widetilde{p},\\[-4mm]\\
\nabla\cdot \widetilde{Y} = - \det(\nabla \widetilde{Y}),
\end{cases}
\end{equation}
where $\widetilde{p}(t, y) = p(t, Qy)$ if $\widetilde{X}$ is defined
by \eqref{RoT} and $\widetilde{p}(t, y) = p(\lambda t, \lambda y)$
if $\widetilde{X}$ is defined by \eqref{SaT}.

Let us first take $\widetilde{X}$ being defined in \eqref{SaT}.
Differentiate \eqref{2-1} with respect to $\lambda$ and then take
$\lambda = 1$, one has
\begin{equation}\label{2-2}
\begin{cases}
(\nabla X)^{\top}(\partial_t^2 - \Delta)(S - 1)Y + [\nabla
  (S - 1)Y]^{\top}(\partial_t^2 - \Delta) Y = - \nabla Sp,\\[-4mm]\\
\nabla\cdot (S - 1)Y = \partial_1(S - 1)Y^2\partial_2Y^1 +
  \partial_1Y^2\partial_2(S - 1)Y^1\\
\quad\quad\quad\ -\ \partial_1Y^1\partial_2(S - 1)Y^2 - \partial_1(S
  - 1)Y^1\partial_2Y^2.
\end{cases}
\end{equation}
Here $S$ denotes the scaling operator: $$S = t\partial_t +
y^j\partial_j,$$ and throughout this paper, we use Einstein's
convention for repeated indices. Similarly, denote the rotation
operator by
\begin{equation}\nonumber
\Omega = I\partial_\theta + A,\quad \partial_\theta = y^1\partial_2
- y^2\partial_1,
\end{equation}
where $A$ is given in \eqref{RoT}. Let $\widetilde{X}$ be defined in
\eqref{RoT}. Differentiate \eqref{2-1} with respect to $\lambda$ and
then take $\lambda \to 0$, one has
\begin{equation}\label{2-3}
\begin{cases}
(\nabla X)^{\top}(\partial_t^2\Omega Y - \Delta \Omega Y) + (\nabla
  \Omega Y)^{\top}(\partial_t^2Y - \Delta Y) = - \nabla \partial_\theta p,\\[-4mm]\\
\nabla\cdot \Omega Y = \partial_1\Omega Y^2\partial_2Y^1 +
\partial_1Y^2\partial_2\Omega Y^1 -
\partial_1Y^1\partial_2\Omega Y^2 - \partial_1\Omega Y^1\partial_2Y^2.
\end{cases}
\end{equation}

For any  vector $Y$ and scalar $p$, we make the following
conventions:
\begin{equation}\nonumber
\begin{cases}
\widetilde{\Omega} Y \triangleq \partial_\theta Y + AY, \quad
\widetilde{\Omega} p
\triangleq \partial_\theta p,\quad \widetilde{\Omega} Y^j = (\widetilde{\Omega} Y)^j,\\[-4mm]\\
\widetilde{S} Y \triangleq (S - 1) Y, \quad \widetilde{S} p
\triangleq S p,\quad \widetilde{S} Y^j = (\widetilde{S} Y)^j.
\end{cases}
\end{equation}
Let $\Gamma$ be any operator of the following set
\begin{equation}\nonumber
\{\partial_t, \partial_1, \partial_2, \widetilde{\Omega},
\widetilde{S}\}.
\end{equation}
Then for any multi-index $\alpha = \{\alpha_1, \alpha_2, \alpha_3,
\alpha_4, \alpha_5\}^\top \in \mathbb{N}^5$, using similar arguments as in \eqref{2-2} and
\eqref{2-3}, we have (see Appendix A)
\begin{eqnarray}\label{Elasticity-D}
&&\partial_t^2\Gamma^\alpha Y - \Delta \Gamma^\alpha Y = - (\nabla
  X)^{- \top}\nabla\Gamma^\alpha p\\\nonumber
&&\quad\quad\quad\quad\quad -\ \sum_{\beta + \gamma
  = \alpha,\ \gamma \neq \alpha}C_\alpha^\beta(\nabla X)^{- \top}(\nabla
  \Gamma^\beta Y)^{\top}(\partial_t^2 - \Delta)\Gamma^\gamma Y.
\end{eqnarray}
together with the constraint
\begin{equation}\label{Struc-2}
\nabla\cdot \Gamma^\alpha Y = \sum_{\beta + \gamma =
\alpha}C_\alpha^\beta\big(\partial_1\Gamma^\beta Y^2\partial_2\Gamma^\gamma Y^1 -
  \partial_1\Gamma^\gamma Y^1\partial_2\Gamma^\beta Y^2\big).
\end{equation}
Here the binomial coefficient $C_\alpha^\beta$ is given by
$$C_\alpha^\beta = \frac{\alpha!}{\beta!(\alpha - \beta)!}.$$ The structural identity \eqref{Struc-2} will be of extremal
importance for our proof.

Throughout this paper, we use the notation $D$ for space-time
derivatives:
$$D = (\partial_t, \partial_1,
\partial_2).$$
We use $\langle a\rangle$ to denote
$$\langle a\rangle = \sqrt{1 + a^2}$$
and $[a]$ to denote the biggest integer which is no more than $a$:
$$[a] = {\rm the\ biggest\ integer\ which\ is\ no\ more\ than\ a}.$$
We often use the following abbreviations:
$$\|\Gamma^{\leq |\alpha|}f\| = \sum_{|\beta| \leq |\alpha|}\|\Gamma^\beta f\|.$$
We need to use Klainerman's generalized energy which is defined by
\begin{equation}\nonumber
\mathcal{E}_{\kappa} = \sum_{|\alpha| \leq \kappa -
1}\|D\Gamma^\alpha Y\|_{L^2}^2.
\end{equation}
We define the following weighted $L^2$ generalized energy by (which
is a modification of the original one of Klainerman-Sideris in
\cite{KS96})
\begin{equation}\label{Defn}
\mathcal{X}_{\kappa} = \sum_{|\alpha| \leq \kappa - 2}\Big(\int_{r
\leq 2\langle t \rangle}\langle t - r\rangle^2 |D^2\Gamma^\alpha
Y|^2dy + \int_{r > 2\langle t \rangle}\langle t \rangle^2
|D^2\Gamma^\alpha Y|^2dy\Big).
\end{equation}
To describe the space of the initial data, we follow Sideris \cite{Sideris00} and introduce
\begin{equation}\nonumber
\Lambda = \{\nabla, r\partial_r, \Omega\},
\end{equation}
and
\begin{equation}\nonumber
H^\kappa_\Lambda =\big\{(f, g)\big| \sum_{|\alpha| \leq \kappa -
1}\big(\|\Lambda^\alpha f\|_{L^2} + \|\nabla\Lambda^\alpha f\|_{L^2}
+ \|\Lambda^\alpha g\|_{L^2}\big)<\infty\big\}.
\end{equation}
Then as in \cite{Sideris00}, we define
\begin{eqnarray}\nonumber
&&H^\kappa_\Gamma(T) = \big\{Y: [0,T)\times \mathbb{R}^2 \to
  \mathbb{R}^2\big|\Gamma^\alpha Y \in L^\infty([0,T);
  L^2(\mathbb{R}^2)),\\\nonumber
&&\quad \quad \quad\ \ \ \ \ \partial_t\Gamma^\alpha Y,
\nabla\Gamma^\alpha
  Y \in L^\infty([0,T); L^2(\mathbb{R}^2)),\ \ \forall\ |\alpha| \leq \kappa - 1\big\}
\end{eqnarray}
with the norm
\begin{equation}\nonumber
\sup_{0 \leq t < T} \mathcal{E}_\kappa^{1/2}(Y).
\end{equation}

We are ready to state the main theorem of this paper.

\begin{thm}\label{GlobalW}
Let $W(F) = \frac{1}{2}|F|^2$ be an isotropic Hookean strain energy
function. Let $M_0 > 0$ and $0 < \delta < \frac{1}{8}$ be two given
constants and $(Y_0, v_0) \in H^{\kappa}_\Lambda$ with $\kappa \ge
10$. Suppose that $Y_0$ satisfies the structural constraint
\eqref{Struc-1} at $t = 0$ and
\begin{eqnarray}\nonumber
&&\mathcal{E}_{\kappa}^{\frac{1}{2}}(0) = \sum_{|\alpha| \leq \kappa
- 1}\big(
  \|\nabla\Lambda^{\alpha}Y_0\|_{L^2} +
  \|\Lambda^{\alpha}v_0\|_{L^2}\big) \leq
  M_0,\\\nonumber
&&\mathcal{E}_{\kappa - 2}^{\frac{1}{2}}(0) = \sum_{|\alpha| \leq
\kappa - 3}
  \big(\|\nabla\Lambda^{\alpha}Y_0\|_{L^2} +
  \|\Lambda^{\alpha}v_0\|_{L^2}\big) \leq
  \epsilon.
\end{eqnarray}
There exists a positive constant $\epsilon_0 < e^{- M_0}$ which
depends only on $\kappa$ and $M_0$, $\delta$ such that, if $\epsilon
\leq \epsilon_0$, then the system of incompressible Hookean
elastodynamics \eqref{B-9} with following initial data
$$Y(0, y) = Y_0(y),\quad \partial_tY(0, y) = v_0(y)$$ has a unique
global classical solution such that
\begin{eqnarray}\nonumber
\mathcal{E}_{\kappa - 2}^{\frac{1}{2}}(t) \leq \epsilon
\exp\big\{C_0^2M_0\big\},\quad \mathcal{E}_{\kappa}^{\frac{1}{2}}(t)
\leq C_0M_0(1 + t)^{\delta}
\end{eqnarray}
for some $C_0 > 1$ uniformly in $t$.
\end{thm}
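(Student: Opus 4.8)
The plan is to run a bootstrap (continuity) argument on the pair of energies $(\mathcal{E}_{\kappa-2}, \mathcal{E}_\kappa)$ together with the weighted energy $\mathcal{X}_\kappa$. Fix the ansatz
\begin{equation}\nonumber
\mathcal{E}_{\kappa-2}^{1/2}(t) \leq 2\epsilon\exp\{C_0^2 M_0\},\qquad \mathcal{E}_\kappa^{1/2}(t) \leq 2C_0 M_0 \langle t\rangle^{\delta},
\end{equation}
on a maximal interval $[0,T^\ast)$, and show that on that interval the \emph{same} bounds hold with the constant $2$ replaced by $1$; local well-posedness and the constraint-propagation identity \eqref{Struc-1}–\eqref{Struc-2} then close the argument and give $T^\ast=\infty$. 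First I would collect the consequences of the ansatz: the weighted Sobolev and refined $L^\infty$ estimates from Section \ref{lemmas} convert the $L^2$ bounds into pointwise decay for $\Gamma^{|\alpha|}Y$ with $|\alpha|$ roughly $\le \kappa/2$, the good-derivative estimates give the extra $\langle t-r\rangle/\langle t\rangle$ gain on $\bar\partial \Gamma^\alpha Y$, and crucially one controls $\mathcal{X}_\kappa$ in terms of $\mathcal{E}_\kappa$ using Lemma \ref{WE-1} applied to $\langle t\rangle(\partial_t^2-\Delta)\Gamma^\alpha Y$ — the point being that, by the strong null structure of \eqref{Elasticity-D}, the right-hand side of that equation is a sum of null forms and pressure terms which are themselves (to leading order) $(\partial_t^2-\Delta)\Gamma^\gamma Y$, so the $\langle t\rangle$-weighted $L^2$ norm is absorbable without the borderline Riesz-operator issue that $r$-weights would create in 2D.

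The heart of the matter is the highest-order estimate. Applying $\Gamma^\alpha$ with $|\alpha|\le\kappa-1$ and testing \eqref{Elasticity-D} against $\partial_t\Gamma^\alpha Y$ (with Alinhac's ghost weight $e^{q(t-r)}$, $q'(s)=\langle s\rangle^{-2\delta}$ say), the energy inequality reads schematically
\begin{equation}\nonumber
\frac{d}{dt}\Big(\mathcal{E}_\kappa + \text{ghost terms}\Big) + \sum_{|\alpha|\le\kappa-1}\big\|\langle t-r\rangle^{-\delta}\bar\partial\Gamma^\alpha Y\big\|_{L^2}^2 \lesssim \big|\text{pressure}\big| + \big|\text{quadratic null terms}\big|.
\end{equation}
For the quadratic terms $(\nabla X)^{-\top}(\nabla\Gamma^\beta Y)^\top(\partial_t^2-\Delta)\Gamma^\gamma Y$ one splits into the case where the high-order factor carries few enough derivatives to be estimated in $L^\infty$ times $\mathcal{E}_\kappa^{1/2}$, and the case where it is the top factor — there one uses $(\partial_t^2-\Delta)\Gamma^\gamma Y$ again to expose $\mathcal{X}_\kappa$-type weights plus the ghost-energy good-derivative gain, producing something like $\langle t\rangle^{-1+\delta}\mathcal{E}_\kappa$ which is integrable after accounting for the $\langle t\rangle^{2\delta}$ growth, i.e. Grönwall with a $\langle t\rangle^{-1+C\delta}$ coefficient. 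The pressure term $\nabla\Gamma^\alpha p$ is the delicate one: one does \emph{not} bound it in $L^2$ directly (that loses a derivative), but instead uses that $\nabla\cdot[(\nabla X)^{-\top}\nabla\Gamma^\alpha p] = -\nabla\cdot\big[\text{quadratic}\big]$, so that $\Gamma^\alpha p$ solves an elliptic equation whose right side is, by \eqref{Struc-2} and \eqref{Elasticity-D}, a divergence of null quadratic terms — hence $\|\nabla\Gamma^\alpha p\|_{L^2}$ is controlled by a \emph{product} of lower norms with one derivative effectively gained, at the cost of the slower decay that the ghost weight then recovers.

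For the lower-order estimate ($|\alpha|\le\kappa-3$), instead of estimating $\Gamma^\alpha Y$ itself I would Helmholtz-decompose it: the divergence-free part satisfies a clean wave equation and is handled by standard generalized energy plus the $\mathcal{X}_{\kappa}$ weights, while the curl-free part is governed — after rewriting the system in the form alluded to as \eqref{Elasticity-L} using the constraint \eqref{Struc-2}, which makes the nonlinearity genuinely quadratic in good quantities — by an energy estimate with subcritical $\langle t\rangle^{-3/2}$ decay, so its contribution to the Grönwall integral is harmless and one gets the constant-in-time bound $\mathcal{E}_{\kappa-2}^{1/2}\lesssim\epsilon e^{C_0^2 M_0}$. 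The main obstacle, as flagged in the introduction, is precisely the interplay at top order between \emph{(i)} the derivative loss in the pressure, circumvented by the strong null/elliptic-gain mechanism, and \emph{(ii)} the resulting loss of time decay, circumvented by the ghost weight — getting the bookkeeping so that the exponent $\delta<1/8$ leaves the final Grönwall coefficient integrable (something like $\langle t\rangle^{-1+C_0\delta}$) is where all the constants have to be chosen consistently, and where $\kappa\ge 10$ is used to have enough room in the Sobolev splitting. Closing the bootstrap with strict inequalities, together with propagation of the incompressibility constraint, yields the global solution and the stated bounds.
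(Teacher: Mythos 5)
Your proposal follows essentially the same architecture as the paper: a continuation/bootstrap argument that reduces the theorem to two differential inequalities for $\mathcal{E}_{\kappa}$ and $\mathcal{E}_{\kappa-2}$; a ghost-weighted top-order energy estimate that converts the strong null structure into a time-integrable good-derivative term; an elliptic/structural gain of one derivative on $\nabla\Gamma^\alpha p$ via the constraint \eqref{Struc-2}; control of $\mathcal{X}_\kappa$ by $\mathcal{E}_\kappa$ through $\langle t\rangle\|(\partial_t^2-\Delta)\Gamma^\alpha Y\|_{L^2}$; and a curl--divergence split at the lower order. So the route is the paper's.

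Two points deserve a concrete flag. First, the ghost weight: $e^{-q(\sigma)}$ must be uniformly bounded above and below so that the ghost-weighted quadratic form is comparable to $\mathcal{E}_\kappa$, which requires $\int_{\mathbb{R}} q'(s)\,ds <\infty$. The paper takes $q(\sigma)=\arctan\sigma$, so $q'(\sigma)=\langle\sigma\rangle^{-2}$, giving the damping term $\int \langle t-r\rangle^{-2}|(\omega_j\partial_t+\partial_j)\Gamma^\alpha Y|^2$. Your tentative choice $q'(s)=\langle s\rangle^{-2\delta}$ with $\delta<\tfrac18$ is not integrable, so $e^{-q}$ would be unbounded and the ghost energy would no longer control $\mathcal{E}_\kappa$. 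The $\delta$ in the statement is the slow-growth exponent $\mathcal{E}_\kappa^{1/2}\lesssim\langle t\rangle^\delta$ (used to keep $\int\langle t\rangle^{\delta-3/2}\,dt<\infty$ when closing the $\mathcal{E}_{\kappa-2}$ bound); it is not a ghost-weight parameter. Second, in the lower-order step it is the vorticity $\nabla^\perp\cdot\Gamma^\alpha Y$ --- equivalently the divergence-free part of $\Gamma^\alpha Y$ --- that satisfies the wave equation \eqref{Elasticity-L} and is estimated by generalized energy with the $\langle t\rangle^{-3/2}$ decay rate \eqref{LOEE-1}; the curl-free part $\nabla\cdot\Gamma^\alpha Y$ is bounded directly from \eqref{Struc-2} as a purely quadratic quantity, \eqref{B-3}, and needs no separate energy estimate. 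You appear to have swapped these two roles. Neither issue changes the architecture, but the ghost-weight integrability is where a careless parameter choice would quietly break the equivalence of energies and hence the Gr\"onwall closure.
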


The main strategy of the proof is as follows: For initial data
satisfying the constraints in Theorem \ref{GlobalW}, we will prove
that
\begin{eqnarray}\label{C1}
\mathcal{E}_{\kappa}^\prime(t) \leq \frac{C_0}{4}\langle
t\rangle^{-1}\mathcal{E}_{\kappa}(t)\mathcal{E}_{\kappa -
  2}^{\frac{1}{2}}(t)
\end{eqnarray}
which is given in \eqref{ES-2} and
\begin{eqnarray}\label{C2}
\mathcal{E}_{\kappa - 2}^\prime(t) \leq \frac{C_0}{4}\langle
t\rangle^{-\frac{3}{2}}\mathcal{E}_{\kappa -
2}(t)\mathcal{E}_{\kappa}^{\frac{1}{2}}(t)
\end{eqnarray}
which is given in \eqref{LOEE-1} and \eqref{B-3}, for all $t \geq 0$ and some absolute
positive constant $C_0$ depending only on $\kappa$.  Once the above
differential inequalities are proved, it is easy to show that the
bounds for $\mathcal{E}_{\kappa - 2}^{\frac{1}{2}}$ and
$\mathcal{E}_{\kappa}^{\frac{1}{2}}$ given in the theorem hold true for all $t \geq 0$ by taking an appropriate small $\epsilon_0$, which yields the global existence result and
completes the proof of Theorem \ref{GlobalW}.

Indeed, one may just take
$$\epsilon_0 =  C_0^{-1}\delta\exp\big\{- C_0^2M_0\big\}.$$
Note that
$$\mathcal{E}_{\kappa - 2}^{\frac{1}{2}}(0) \leq \epsilon \leq \epsilon_0,\quad \mathcal{E}_{\kappa}^{\frac{1}{2}}(0) \leq M_0.$$
By continuity, there exists a positive time $T < \infty$  such that the bounds of $\mathcal{E}_{\kappa - 2}^{\frac{1}{2}}(t)$ and $\mathcal{E}_{\kappa}^{\frac{1}{2}}(t)$ in Theorem \ref{GlobalW} are true for $t \in [0, T]$:
\begin{eqnarray}\label{B-1}
\mathcal{E}_{\kappa - 2}^{\frac{1}{2}}(t) \leq \epsilon
\exp\big\{C_0^2M_0\big\} \leq C_0^{-1}\delta,\quad \mathcal{E}_{\kappa}^{\frac{1}{2}}(t)
\leq C_0M_0(1 + t)^{\delta}.
\end{eqnarray}

We claim that \eqref{B-1} is true for all $ t \in [0, \infty)$. We prove this claim by contradiction. Suppose that $T_{\rm max}\in [T, \infty)$ be the largest time such that \eqref{B-1} is true on $[0, T_{\rm max}]$. We are going to deduce a consequence which contradicts to the assumption on $T_{\rm max} < \infty$.
Keep in mind that now we have both \eqref{B-1} and the differential inequalities \eqref{C1}-\eqref{C2} in hand for $t \in [0,T_{\rm max}]$. By using \eqref{B-1} and the first differential inequality \eqref{C1}, one
has
\begin{eqnarray}\nonumber
\mathcal{E}_{\kappa}(t) \leq
\mathcal{E}_{\kappa}(0)\exp\Big\{\frac{\delta}{4}\int_0^t\langle t\rangle^{-1}dt\Big\} = M_0^2(1
+ t)^{\frac{\delta}{2}},\quad 0 \leq t \leq T_{\rm max}.
\end{eqnarray}
Similarly, by using \eqref{B-1} and the second differential inequality \eqref{C2}, one
has
\begin{eqnarray}\nonumber
\mathcal{E}_{\kappa - 2}(t) &\leq& \mathcal{E}_{\kappa - 2}
  (0)\exp\Big\{\frac{C_0^2M_0}{4}\int_0^t\langle
  s\rangle^{\delta - \frac{3}{2}}ds\Big\}\\\nonumber
&<& \epsilon^2\exp\big\{C_0^2M_0\big\},\quad 0 \leq t \leq T_{\rm max}.
\end{eqnarray}
Consequently, we have proved that, by taking
$$\epsilon_0 =  C_0^{-1}\delta\exp\big\{- C_0^2M_0\big\},$$
one has
\begin{eqnarray}\nonumber
\mathcal{E}_{\kappa - 2}(t) < \epsilon^2\exp\big\{C_0^2M_0\big\},\quad \mathcal{E}_{\kappa}(t) < M_0^2(1
+ t)^{2\delta},\quad 0 \leq t \leq T_{\rm max}.
\end{eqnarray}
The above inequalities show that \eqref{B-1} can still be true for $t \in [0, T_{\rm max} + \epsilon']$ for some $\epsilon' > 0$.  This contracts to the assumption on $T_{\rm max}$.
Hence we in fact
proved the \textit{a priori} bounds \eqref{B-1} on $[0, \infty)$ which is stated in Theorem
\ref{GlobalW}. Moreover, we have
$$\mathcal{E}_{\kappa - 2}^{\frac{1}{2}} \leq \epsilon
\exp\big\{C_0^2M_0\big\} \leq C_0^{-1}\delta.$$

So from now on our main goal is going to show the two \textit{a
priori} differential inequalities \eqref{C1}-\eqref{C2}. The highest order
one will be done in section \ref{HEE} and the lower order one will
be done in section \ref{LEE}. By taking an appropriately large $C_0$ and an appropriately small $\delta$, we can assume that
$\mathcal{E}_{\kappa - 2}^{\frac{1}{2}} \ll 1$, which is always assumed in the remaining of
this paper. We often use the fact that $\|\nabla X\|_{L^\infty} \leq
3$ since $\|\nabla X - I\|_{L^\infty} \lesssim \mathcal{E}_{\kappa -
2}^{\frac{1}{2}}$. Similarly, by \eqref{Inverse}, the above is also
applied for $(\nabla X)^{- 1}$.

\section{Preliminaries}\label{lemmas}

In this section we derive several weighted $L^\infty-L^2$ type decay
in time estimates. We remark that the idea of part of the proofs are
basically appeared in the earlier work \cite{LSZ13} and the references therein. The new
weighted $L^2$ energy $\mathcal{X}$ makes the proofs slightly
different. For a self-contained presentation, we still include their
proofs below.

We shall need apply the Littlewood-Paley theory. Let $\phi$ be a
smooth function supported in $\{\tau \in \mathbb{R}^+: \frac{3}{4}
\leq \tau \leq \frac{8}{3}\}$ such that
\begin{equation}\nonumber
\sum_{j\in \mathbb{Z}}\phi(2^{-j}\tau) = 1.
\end{equation}
For $f \in L(\mathbb{R}^2)$, we set
\begin{equation}\nonumber
\Delta_jf = \mathcal{F}^{-1}\big(\phi(2^{-
j}|\xi|)\mathcal{F}(f)\big)
\end{equation}
and
\begin{equation}\nonumber
S_{j}f = \sum_{- \infty < k \leq j - 1,\ k \in \mathbb{Z}}\Delta_kf.
\end{equation}
Here $\mathcal{F}$ denotes the usual Fourier transformation in $y$-variable and
$\mathcal{F}^{- 1}$ the inverse Fourier transformation.

The following lemma takes care of the decay properties of $L^\infty$
norm of derivative of unknowns. The 3D version of some of those kind of estimates has already
appeared in the work of Klainerman \cite{Klainerman85}, Klainerman-Sideris
\cite{KS96}, Sideris \cite{Sideris00} and the references therein. See also \cite{LSZ13} for some 2D cases. It shows that the $L^\infty$ norm of
derivative of unknowns will decay in time at least as $\langle
t\rangle^{- \frac{1}{2}}$. This can be improved a little bit to get
an extra factor $\langle t - r\rangle^{- \frac{1}{2}}$ near the
light cone region $\langle t\rangle/2 \leq r \leq 3\langle
t\rangle/2$. By a refined Sobolev imbedding inequality, one can even
improve the decay rate in time to be $\langle
t\rangle^{-1}\ln^{\frac{1}{2}}(e + t)$ in the space-time region away
from the light cone (we remark that in this paper we don't need the
full strength of the estimate in \eqref{C-11}). This will be used to
break the criticality of lower-order generalized energy estimate in
the space-time region $|r - t| \geq \frac{\langle t\rangle}{2}$. It
also exhibits that the lack of Lorentz invariance only leads to a
loss of time decay of $\ln^{\frac{1}{2}}(e + t)$ in \eqref{C-11}.

\begin{lem}\label{DecayEF}
Let $t \geq 4$. Then there holds
\begin{equation}\label{C-1}
\langle r \rangle^{\frac{1}{2}}|D\Gamma^\alpha Y| \lesssim \|D\Gamma^{\leq 2}\Gamma^\alpha Y\|_{L^2} \leq
\mathcal{E}_{|\alpha| + 3}^{\frac{1}{2}}.
\end{equation}
Moreover, for $r \leq 2\langle t\rangle /3$, or $r \geq 5\langle
t\rangle/4$, there holds
\begin{equation}\label{C-11}
 t |D\Gamma^\alpha Y| \lesssim
\big(\mathcal{E}_{|\alpha| + 1}^{\frac{1}{2}} +
\mathcal{X}_{|\alpha| + 3}^{\frac{1}{2}}\big)\ln^{\frac{1}{2}}\big(e
+ t\big).
\end{equation}
For $\langle t\rangle/3 \leq r \leq 5\langle t\rangle/2$, there
holds
\begin{equation}\label{C-3}
\langle r \rangle^{\frac{1}{2}} \langle t-r\rangle^{\frac{1}{2}}
|D\Gamma^\alpha Y| \lesssim \mathcal{E}_{|\alpha| + 2}^{\frac{1}{2}}
+ \mathcal{X}_{|\alpha| + 3}^{\frac{1}{2}}.
\end{equation}
\end{lem}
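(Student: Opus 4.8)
The plan is to establish the three estimates as successive refinements, all coming from Sobolev embedding on $\mathbb{R}^2$ combined with the commutator structure of the vector fields $\Gamma$ and the weighted energy $\mathcal{X}$. First I would record the elementary fact that $\langle r\rangle\lesssim \langle t\rangle$ on $r\le 2\langle t\rangle$ and $\langle r\rangle \sim \langle t-r\rangle$ on $r\ge 2\langle t\rangle$, and that $\partial_r$, $\frac{1}{r}\partial_\theta$ can be written in terms of $\nabla$ while the tangential part $\frac1r\partial_\theta$ is controlled by $\Omega$; hence each application of $r\partial_r$ or $\Omega$ costs at most one extra $\Gamma$.

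For \eqref{C-1}: since we are in two space dimensions, the standard Sobolev inequality $\|g\|_{L^\infty}\lesssim \|g\|_{H^2}$ applied after a dyadic/partition-of-unity localization in the radial variable yields $\langle r\rangle^{1/2}|g(x)|\lesssim \sum_{|\beta|\le 2}\|\langle r\rangle^{1/2}\partial^\beta g\|_{L^2}$ up to harmless weight commutations; taking $g=D\Gamma^\alpha Y$ and moving the $\langle r\rangle^{1/2}$ weight inside by writing $r\partial_i$ in terms of the rotation and scaling fields converts it into $\mathcal{E}_{|\alpha|+3}^{1/2}$. Concretely I would localize with $S_j$/$\Delta_j$ to a dyadic annulus $\{r\sim 2^k\}$, use the 2D embedding $\|f\|_{L^\infty(\text{annulus})}^2 \lesssim \|f\|_{L^2}\|\nabla f\|_{L^2}$ together with the angular derivative, and sum in $k$; the $\langle r\rangle^{1/2}$ is exactly the gain from the measure of the annulus.

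For \eqref{C-3}, the refinement near the light cone: here one also exploits that $\langle t-r\rangle$ can be generated by the scaling field, since $S = t\partial_t + r\partial_r$ and on the support $\langle t\rangle\sim\langle r\rangle$ the operator $\langle t-r\rangle D$ is comparable to a combination of $\Gamma$-derivatives plus $\langle t\rangle(\partial_t^2-\Delta)$-type quantities that are absorbed into $\mathcal{X}$; applying the same annular 2D Sobolev embedding to $\langle t-r\rangle^{1/2}\langle r\rangle^{1/2} D\Gamma^\alpha Y$ and distributing the weights onto $\Gamma$'s and onto the $\mathcal{X}_{|\alpha|+3}$ norm gives the claim. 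For \eqref{C-11}, away from the cone, I would instead use the refined Sobolev inequality alluded to in the text (the one producing the $\ln^{1/2}(e+t)$ loss): on the region $r\le 2\langle t\rangle/3$ or $r\ge 5\langle t\rangle/4$ one has $\langle t\rangle\lesssim \langle t-r\rangle$ plus, where $r$ is small, $\langle t\rangle\sim\langle t\rangle$ with the $\langle t\rangle^2$-weighted part of $\mathcal{X}$ available; summing the dyadic pieces in $k$ produces the logarithm, and the weight $\langle t\rangle$ is split as before between a first-order $\Gamma$-derivative ($\mathcal{E}_{|\alpha|+1}^{1/2}$) and the second-order weighted energy $\mathcal{X}_{|\alpha|+3}^{1/2}$.

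The main obstacle I expect is bookkeeping the weight $\langle t-r\rangle$ (or $\langle t\rangle$) correctly through the Littlewood–Paley localization and the commutators: one must verify that commuting $\langle t-r\rangle^{1/2}$ or a cutoff adapted to the cone past $\nabla$ and past $\Delta_j$ only generates terms of the same or lower order, with constants independent of $t$, and that the summation over dyadic shells converges (this is where the $\ln^{1/2}(e+t)$ appears and cannot be avoided in the absence of Lorentz boosts). A secondary point of care is that near $r=0$ the weight $\langle r\rangle$ degenerates to a constant, so the small-$r$ annuli must be handled by the plain $H^2$ embedding without the radial gain, which is consistent since there $\langle r\rangle^{1/2}\sim 1$.
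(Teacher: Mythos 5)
Your treatments of \eqref{C-1} and \eqref{C-11} are essentially the paper's: for \eqref{C-1} the Sobolev embedding $H^1(\mathbb{S}^1)\hookrightarrow L^\infty(\mathbb{S}^1)$ plus an integration in the radial variable (equivalently your annular argument, the $\langle r\rangle^{1/2}$ being the Jacobian gain) with one $\Omega$ and one $\partial_r$ extra, giving $\mathcal{E}_{|\alpha|+3}^{1/2}$; for \eqref{C-11} a Brezis--Gallou\"et-type inequality $\|f\|_{L^\infty}\lesssim\|\nabla f\|_{L^2}\ln^{1/2}(e+t)+\langle t\rangle^{-1}(\|f\|_{L^2}+\|\nabla^2f\|_{L^2})$, obtained from the Littlewood--Paley decomposition with frequency cutoff $2^N\sim t$ (the logarithm comes from Cauchy--Schwarz over the $O(\ln t)$ intermediate frequency blocks, not from spatial annuli), applied to $(1-\varphi^t)D\Gamma^\alpha Y$ with a radial cutoff $\varphi^t$ near the cone, together with the observation that the weight in \eqref{Defn} is comparable to $\langle t\rangle$ on ${\rm supp}(1-\varphi^t)$.

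The step for \eqref{C-3}, however, has a genuine gap. You propose to generate the weight by claiming that ``$\langle t-r\rangle D$ is comparable to a combination of $\Gamma$-derivatives plus $\langle t\rangle(\partial_t^2-\Delta)$-type quantities that are absorbed into $\mathcal{X}$''. Neither half of this is available. The identity $(t^2-r^2)\partial_t=tS-y^jL_j$ needs the Lorentz boosts $L_j=t\partial_j+y^j\partial_t$, which are not in the admissible family $\Gamma$ here; with only $S$, $\Omega$, $\partial$ one cannot control $\langle t-r\rangle$ times a \emph{first} derivative pointwise by $\Gamma$-derivatives --- the Klainerman--Sideris mechanism acts only at the level of second derivatives, $\langle t-r\rangle|D^2u|\lesssim|D\Gamma u|+|Du|+\langle t\rangle|(\partial_t^2-\Delta)u|$, and that is the content of Lemma \ref{WE-1}, not of this lemma. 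Moreover $\langle t\rangle\|(\partial_t^2-\Delta)\Gamma^\alpha Y\|_{L^2}$ is \emph{not} part of $\mathcal{X}$: in the region $\langle t\rangle/3\le r\le 5\langle t\rangle/2$ relevant to \eqref{C-3} the weight in \eqref{Defn} is only $\langle t-r\rangle\ll\langle t\rangle$, the inequality relating $\mathcal{X}$ to $\langle t\rangle\|(\partial_t^2-\Delta)Y\|_{L^2}$ (Lemmas \ref{WE-1}, \ref{SN-2}) goes in the opposite direction, and its nonlinear form is itself proved using the present lemma, so invoking it here would be circular. The correct mechanism is the same radial argument as in \eqref{C-1} with the weight $\langle t-\rho\rangle$ inserted before integrating in $\rho$: after the integration by parts the full weight $\langle t-r\rangle$ lands only on the terms carrying one \emph{extra} derivative, i.e.\ on $\|\langle t-r\rangle\partial_r\Omega^{\le 1}(\varphi^tD\Gamma^\alpha Y)\|_{L^2}$, which is controlled directly by the definition of $\mathcal{X}_{|\alpha|+3}$ (insert a radial cutoff $\varphi^t$ adapted to $\langle t\rangle/3\le r\le 5\langle t\rangle/2$; it commutes with $\Omega$, and its gradient only costs $\langle t\rangle^{-1}$), while the unweighted lower-order terms give $\mathcal{E}_{|\alpha|+2}^{1/2}$; only the half power $\langle t-r\rangle^{1/2}$ survives on the left because the pointwise quantity $r\langle t-r\rangle|f|^2$ is bounded by the product of the weighted norm of $\partial_r\Omega^{\le1}f$ and the unweighted norm of $\Omega^{\le1}f$. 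No wave-operator term and no scaling-field identity are needed.
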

\begin{proof}
First of all, by Sobolev imbedding $H^2(\mathbb{R}^2)
\hookrightarrow L^\infty(\mathbb{R}^2)$, \eqref{C-1} is
automatically true for $r \leq 1$. By Sobolev imbedding on sphere
$H^1(\mathbb{S}^1) \hookrightarrow L^\infty(\mathbb{S}^1)$, one has
\begin{equation}\nonumber
|f(r\omega)|^2 \lesssim \sum_{|\beta| \leq
1}\int_{\mathbb{S}^1}|\Omega^\beta f(r\omega)|^2d\sigma.
\end{equation}
Consequently, % for $\lambda \geq 1$,
we have
\begin{align*}
r|f(r\omega)|^2&\lesssim  \sum_{|\beta| \leq
  1} \int_{\mathbb{S}^1}r|\Omega^\beta f(r\omega)|^2d\sigma\\
&= - \sum_{|\beta| \leq 1}\int_{\mathbb{S}^1}d\sigma
  \int_r^\infty r\partial_\rho
  [|\Omega^\beta f(\rho\omega)|^2]d\rho\\
&= - \sum_{|\beta| \leq 1}\int_{\mathbb{S}^1}d\sigma
  \int_r^\infty r 2\Omega^\beta f(\rho \omega)\partial_\rho\Omega^\beta f(\rho \omega)d\rho\\
&\lesssim \sum_{|\beta| \leq 1}\int_{\mathbb{S}^1}\int_r^\infty
  |\Omega^\beta f(\rho\omega)||\partial_\rho\Omega^\beta f(\rho \omega)|\rho d\rho d\sigma\\
&\lesssim \sum_{|\beta| \leq 1}\|\partial_r\Omega^\beta
  f\|_{L^2}\|\Omega^\beta f\|_{L^2}.
\end{align*}
Then \eqref{C-1} follows by taking $f = D\Gamma^\alpha Y$ in the
above estimate.

Next, by the following well-known Bernstein inequality:
\begin{eqnarray}\nonumber
\|\Delta_jf\|_{L^\infty} \lesssim 2^{j}\|\Delta_jf\|_{L^2},\quad
\|S_jf\|_{L^\infty} \lesssim 2^{j}\|S_jf\|_{L^2},
\end{eqnarray}
one has
\begin{eqnarray}\nonumber
\|f\|_{L^\infty} &=& \|\sum_j\Delta_jf\|_{L^\infty}\\\nonumber &\lesssim&
  2^{- N}\|S_{- N}f\|_{L^2} + \sum_{- N \leq j \leq
  N }2^j\|\Delta_jf\|_{L^2} + \sum_{j \geq N +
  1}2^j\|\Delta_jf\|_{L^2}\\\nonumber
&\lesssim& 2^{- N}\|f\|_{L^2} + \sqrt{2N}\|\nabla f\|_{L^2} + 2^{-
  N}\|\nabla^2f\|_{L^2}.
\end{eqnarray}
Choosing $N = \frac{\ln(e + t)}{\ln 2}$, one has
\begin{equation}\label{sobolev}
\|f\|_{L^\infty} \lesssim \|\nabla f\|_{L^2}\ln^{\frac{1}{2}}\big(e
+ t\big) + \frac{1}{1 + t}\big(\|f\|_{L^2} +
\|\nabla^2f\|_{L^2}\big).
\end{equation}

Let us first choose a radial cutoff function $\varphi \in
C_0^\infty(\mathbb{R}^2)$ which satisfies
\begin{equation}\nonumber
\varphi = \begin{cases}1,\quad {\rm if}\ \frac{3}{4} \leq r \leq \frac{6}{5}\\
0,\quad {\rm if}\ r < \frac{2}{3}\ {\rm or}\ r >
\frac{5}{4}\end{cases},\quad |\nabla\varphi| \leq 100.
\end{equation}
For each fixed $t \geq 4$, let $\varphi^t(y) = \varphi(y/\langle
t\rangle)$. Clearly, one has
$$\varphi^t(y) \equiv 1\ \ {\rm for}\ \frac{3\langle t \rangle}{4} \leq r \leq
\frac{6\langle t \rangle}{5},\quad \varphi^t(y) \equiv 0\ \ {\rm
for}\ r \leq \frac{2\langle t \rangle}{3}\ {\rm or}\ r \geq
\frac{5\langle t \rangle}{4}$$ and $$|\nabla\varphi^t(y)| \leq
100\langle t\rangle^{-1}.$$

Consequently, for $r \leq 2\langle t\rangle /3$, or $r \geq 5\langle
t\rangle/4$, by  using \eqref{sobolev}, one has
\begin{eqnarray}\nonumber
 t |f| &\lesssim& \langle t\rangle \|(1
  - \varphi^t) f\|_{L^\infty}\\\nonumber
&\lesssim& \langle t\rangle\|\nabla[(1 - \varphi^t)
  f]\|_{L^2}\ln^{\frac{1}{2}}
  \big(e + t\big) + \|(1 - \varphi^t)f\|_{L^2}
  + \|\nabla^2[(1 - \varphi^t)f]\|_{L^2}\\\nonumber
&\lesssim& \big(\|f\|_{L^2} + \langle t\rangle\|(1 -
  \varphi^t)\nabla f\|_{L^2}\big)\ln^{\frac{1}{2}}\big(e + t\big)\\\nonumber
&&+\  \|f\|_{L^2} + \|(1 - \varphi^t)\nabla^2f\|_{L^2} +
  \langle t\rangle^{-1}\|1_{{\rm supp}\varphi^t}\nabla f\|_{L^2}.
\end{eqnarray}
Here we use $1_{\Omega}$ to denote the characteristic function of
$\Omega$. Note that the weight in the definition of
$\mathcal{X}_{|\alpha|}(Y)$ is equivalent to $\langle t\rangle$ on
the support of $1 - \varphi^t(y)$. Hence, we have
\begin{eqnarray}\nonumber
 t|D\Gamma^\alpha Y| &\lesssim& \big(\|D\Gamma^\alpha
  Y\|_{L^2} + \|\langle t\rangle(1 - \varphi^t)\nabla D
  \Gamma^\alpha Y\|_{L^2}\big)\ln^{\frac{1}{2}}\big(e +
  t\big)\\\nonumber
&&+\ \|D\Gamma^\alpha Y\|_{L^2} + \|(1 - \varphi^t)
  \nabla^2D\Gamma^\alpha Y\|_{L^2} + \langle t\rangle^{-1}
  \|1_{{\rm supp}\varphi^t}\nabla D\Gamma^\alpha
  Y\|_{L^2}\\\nonumber
&\lesssim& \big(\mathcal{E}_{|\alpha| +  1}^{\frac{1}{2}} +
  \mathcal{X}_{|\alpha| +  3}^{\frac{1}{2}}\big)
  \ln^{\frac{1}{2}}\big(e + t\big).
\end{eqnarray}
This completes the proof of \eqref{C-11}.

It remains to prove \eqref{C-3}. Notice that $r \geq 1$ for $t \geq 4$.
 Similarly as proving \eqref{C-1}, we calculate that
\begin{eqnarray}\nonumber
&&r\langle t-r\rangle|f(r\omega)|^2 \lesssim r\langle
  t-r\rangle\sum_{|\beta| \leq 1}\int_{\mathbb{S}^1}
  |\Omega^\beta f(r\omega)|^2d\sigma\\\nonumber
&&= - \sum_{|\beta| \leq 1}\int_{\mathbb{S}^1}d\sigma \int_r^\infty
  r\partial_\rho[\langle t-\rho\rangle
  |\Omega^\beta f(\rho\omega)|^2] d\rho\\\nonumber
&&\lesssim \sum_{|\beta| \leq 1}\int_{\mathbb{S}^1}d\sigma
  \int_r^\infty [\langle t-\rho\rangle |\Omega^\beta f(\rho\omega)||\partial_r
  \Omega^\beta f(\rho\omega)|+  |\Omega^\beta f(\rho\omega)|^2]  \rho d\rho\\\nonumber
&&\lesssim \sum_{|\beta| \leq 1}\int_{\mathbb{S}^1}d\sigma
  \int_r^\infty [\langle t-\rho\rangle^2 |\partial_r
  \Omega^\beta f(\rho\omega)|^2 +
  |\Omega^\beta f(\rho\omega)|^2]  \rho d\rho\\\nonumber
&&= \sum_{|\beta| \leq 1}[\|\langle t-r\rangle \partial_r
  \Omega^\beta f\|_{L^2} + \|\Omega^\beta  f\|_{L^2}]^2.
\end{eqnarray}
Slightly changing the definition of $\varphi^t$ and then taking $f =
\varphi^tD\Gamma^\alpha Y$ in the above inequality, one has
\begin{eqnarray}\nonumber
r\langle t-r\rangle|\varphi^tD\Gamma^\alpha Y|^2 &\lesssim&
  \sum_{|\beta| \leq 1}\big(\|\langle t-r\rangle \partial_r
  \Omega^\beta[\varphi^t D\Gamma^\alpha Y]\|_{L^2}
  + \|\Omega^\beta D\Gamma^\alpha Y\|_{L^2}\big)^2\\\nonumber
&\lesssim& \sum_{|\beta| \leq 1}\|\varphi^t\langle t-r\rangle
  \partial_r\Omega^\beta[ D\Gamma^\alpha Y]\|_{L^2}^2\\\nonumber
&&+\ \sum_{|\beta| \leq 1}\|\partial_r\varphi^t\langle
  t-r\rangle\Omega^\beta[D\Gamma^\alpha Y]\|_{L^2}^2
  + \mathcal{E}_{|\alpha| +  2}
\end{eqnarray}
which yields \eqref{C-3}. Here we used the fact that $\Omega$
commutes with $\varphi^t$ due to the symmetry of $\varphi^t$.

\end{proof}

Now let us study the decay properties of the second order
derivatives of unknowns under $L^\infty$ norm. The following lemma
shows that away from the light cone, the second derivatives of
unknowns decay in time like $\langle t\rangle^{-1}$. But near the
light cone, the decay rate is only $\langle
t\rangle^{-\frac{1}{2}}$, with an extra factor $\langle t -
r\rangle^{- 1}$. We emphasis that the 3D version has already appeared in \cite{KS96}.

\begin{lem}\label{DecayES}
Let $t \geq 4$. Then for $r \leq 2\langle t\rangle /3$, or $r \geq
5\langle t\rangle/4$, there hold
\begin{equation}\label{C-12}
 t |D^2\Gamma^\alpha Y| \lesssim \mathcal{X}_{|\alpha|
+ 4}^{\frac{1}{2}}.
\end{equation}
For $r \leq \frac{5\langle t\rangle}{2}$, there holds
\begin{equation}\label{C-4}
\langle r \rangle^{\frac{1}{2}}\langle t-r\rangle|D^2\Gamma^\alpha
Y| \lesssim \mathcal{X}_{|\alpha| + 4}^{\frac{1}{2}} +
\mathcal{E}_{|\alpha| + 3}^{\frac{1}{2}},
\end{equation}
\end{lem}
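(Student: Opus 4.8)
The plan is to mimic the proof of Lemma \ref{DecayEF}, but now applied to second derivatives, using the fact that the weighted energy $\mathcal{X}_{|\alpha|}$ directly controls $\langle t-r\rangle^2$- or $\langle t\rangle^2$-weighted $L^2$ norms of $D^2\Gamma^\alpha Y$. For \eqref{C-12}, in the region $r \leq 2\langle t\rangle/3$ or $r \geq 5\langle t\rangle/4$, the weight $\langle t-r\rangle$ in the definition \eqref{Defn} of $\mathcal{X}_\kappa$ is comparable to $\langle t\rangle$ (this is exactly the point exploited already in proving \eqref{C-11}), so $\|\langle t\rangle\, 1_{\{r\le 2\langle t\rangle/3\ \mathrm{or}\ r\ge 5\langle t\rangle/4\}} D^2\Gamma^\alpha Y\|_{L^2} \lesssim \mathcal{X}_{|\alpha|+2}^{1/2}$, and likewise with derivatives $\nabla$ and $\nabla^2$ applied to $D^2\Gamma^\alpha Y$ one gets $\mathcal{X}_{|\alpha|+3}^{1/2}$ and $\mathcal{X}_{|\alpha|+4}^{1/2}$. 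First I would introduce the same cutoff $\varphi^t(y)=\varphi(y/\langle t\rangle)$ as in Lemma \ref{DecayEF} and apply the log-free version of \eqref{sobolev} — more precisely the plain bound $\|g\|_{L^\infty}\lesssim \langle t\rangle^{-1}(\|g\|_{L^2}+\|\nabla^2 g\|_{L^2}) + \sqrt{2N}\langle t\rangle^{-1}\cdot\langle t\rangle\|\nabla g\|_{L^2}$, but here I actually want to avoid the logarithm entirely: since we may multiply $g=(1-\varphi^t)D^2\Gamma^\alpha Y$ by $\langle t\rangle$ and absorb it into the weight, I would instead just use $H^2(\mathbb{R}^2)\hookrightarrow L^\infty(\mathbb{R}^2)$ on the rescaled function $g(\langle t\rangle z)$, giving $\langle t\rangle|g(y)| \lesssim \|\langle t\rangle g\|_{L^2} + \|\langle t\rangle \nabla g\|_{L^2}\cdot\langle t\rangle + \dots$ — the cleanest route is scaling: set $h(z) = g(\langle t\rangle z)$, then $\|h\|_{L^\infty}\lesssim \|h\|_{H^2(\mathbb{R}^2)}$, and unwinding the scaling gives $\langle t\rangle|g| \lesssim \|\langle t\rangle g\|_{L^2} + \langle t\rangle^2\|\nabla g\|_{L^2}\langle t\rangle^{-1} + \langle t\rangle^3\|\nabla^2 g\|_{L^2}\langle t\rangle^{-2}$, i.e. $\langle t\rangle|g|\lesssim \langle t\rangle\|g\|_{L^2} + \langle t\rangle\|\nabla g\|_{L^2} + \langle t\rangle\|\nabla^2 g\|_{L^2}$. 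Distributing derivatives of $\varphi^t$ (which cost $\langle t\rangle^{-1}$ each and are supported where the weight is $\sim\langle t\rangle$ anyway) then bounds everything by $\mathcal{X}_{|\alpha|+4}^{1/2}$, which is \eqref{C-12}.

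For \eqref{C-4}, in the region $r\leq \frac{5\langle t\rangle}{2}$ I would again split by a cutoff: where $r \le 2\langle t\rangle/3$, \eqref{C-12} already gives $\langle t\rangle|D^2\Gamma^\alpha Y|\lesssim \mathcal{X}_{|\alpha|+4}^{1/2}$ and since $\langle t-r\rangle\lesssim\langle t\rangle$ and $\langle r\rangle^{1/2}\lesssim \langle t\rangle^{1/2}$ there, the claimed bound follows (with room to spare). The substantive region is the light-cone neighborhood $\langle t\rangle/3 \lesssim r \le \frac{5\langle t\rangle}{2}$ (roughly), and there I would copy the spherical-Sobolev argument from the proof of \eqref{C-3}: apply $H^1(\mathbb{S}^1)\hookrightarrow L^\infty(\mathbb{S}^1)$, multiply by $r\langle t-r\rangle^2$, integrate by parts in the radial variable as in
\begin{equation}\nonumber
r\langle t-r\rangle^2 |f(r\omega)|^2 = -\sum_{|\beta|\le 1}\int_{\mathbb{S}^1}d\sigma\int_r^\infty r\,\partial_\rho\big[\langle t-\rho\rangle^2|\Omega^\beta f(\rho\omega)|^2\big]d\rho,
\end{equation}
and the $\partial_\rho$ hits either $|\Omega^\beta f|^2$ — producing $\langle t-\rho\rangle^2|\Omega^\beta f||\partial_\rho\Omega^\beta f|\lesssim \langle t-\rho\rangle^2|\partial_\rho\Omega^\beta f|^2 + |\Omega^\beta f|^2$ by Cauchy-Schwarz — or the weight $\langle t-\rho\rangle^2$, producing $2\langle t-\rho\rangle|\Omega^\beta f|^2$, which is acceptable because one factor of $\langle t-\rho\rangle$ is bounded by $\langle\rho\rangle$ in the relevant range and gets absorbed, or more simply because $\langle t-\rho\rangle|\Omega^\beta f|^2 \lesssim \langle t-\rho\rangle^2|\partial_\rho\Omega^\beta f|^2 + |\Omega^\beta f|^2$ once integrated against another radial integration by parts. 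Taking $f = \varphi^t D^2\Gamma^\alpha Y$ with an appropriately widened cutoff supported in $\{\langle t\rangle/3\lesssim r\lesssim 5\langle t\rangle/2\}$, commuting $\Omega$ past the radially symmetric $\varphi^t$, and recognizing $\|\varphi^t\langle t-r\rangle\partial_r\Omega^\beta D^2\Gamma^\alpha Y\|_{L^2}\lesssim \mathcal{X}_{|\alpha|+4}^{1/2}$ (one extra $\Omega$, one extra $\partial_r$, acting on $D^2\Gamma^\alpha Y$, all with the $\langle t-r\rangle$ weight that is dominated by the weight in \eqref{Defn} since $r\lesssim\langle t\rangle$ there) while $\|\partial_r\varphi^t\langle t-r\rangle\Omega^\beta D^2\Gamma^\alpha Y\|_{L^2}\lesssim \langle t\rangle^{-1}\|\langle t\rangle 1_{\mathrm{supp}\,\nabla\varphi^t}D^2\Gamma^\alpha Y\|_{L^2}\lesssim \mathcal{X}_{|\alpha|+3}^{1/2}$ (the support of $\nabla\varphi^t$ lies where the weight is $\sim\langle t\rangle$), and the remaining $\|\Omega^\beta D^2\Gamma^\alpha Y\|_{L^2}\lesssim \mathcal{E}_{|\alpha|+3}^{1/2}$, assembles to \eqref{C-4}.

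The main obstacle, and the only place requiring care, is the bookkeeping of the weight at the edges of the light-cone cutoff: precisely, one must check that on the support of $\nabla\varphi^t$ the weights $\langle t-r\rangle$ from the spherical integration by parts and $\langle t\rangle$ from the definition of $\mathcal{X}$ are genuinely comparable (they are, since $\nabla\varphi^t$ is supported near $r\sim 2\langle t\rangle/3$ and $r\sim 5\langle t\rangle/4$, where $|t-r|\gtrsim \langle t\rangle$), and that in passing $\Omega$ and $\partial_r$ through $\varphi^t$ the commutator terms are lower order; the symmetry of $\varphi^t$ (already used in Lemma \ref{DecayEF}) handles the $\Omega$ commutator, and $[\partial_r,\varphi^t]=(\partial_r\varphi^t)$ is again supported where the weight is $\sim\langle t\rangle$. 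One should also note that $\partial_r = \omega\cdot\nabla$ so $\partial_r\Omega^\beta D^2\Gamma^\alpha Y$ is controlled by $|\nabla \Omega^\beta D^2\Gamma^\alpha Y|$, costing the stated number of extra $\Gamma$'s; since $\kappa\ge 10$ and the lemma is stated with the index shifts $|\alpha|+4$, there is no difficulty with the regularity budget. Everything else is a routine repetition of the computations already carried out for Lemma \ref{DecayEF}, with $\langle t-r\rangle^2$ in place of $\langle t-r\rangle$ and $D^2\Gamma^\alpha Y$ in place of $D\Gamma^\alpha Y$.
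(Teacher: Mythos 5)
Your argument for \eqref{C-12} lands in the right place — cutoff $1-\varphi^t$, plain $H^2(\mathbb{R}^2)\hookrightarrow L^\infty$, and absorbing $\langle t\rangle$ into the weight which is comparable to $\langle t\rangle$ on $\mathrm{supp}(1-\varphi^t)$ — and this is exactly what the paper does. (The detour through rescaling $h(z)=g(\langle t\rangle z)$ is neither needed nor consistent in its intermediate powers, since in $2$D one has $\|h\|_{L^2}=\langle t\rangle^{-1}\|g\|_{L^2}$ and $\|\nabla^2h\|_{L^2}=\langle t\rangle\|\nabla^2g\|_{L^2}$, so unwinding would produce different powers of $\langle t\rangle$ on the three terms; but since you only use the unscaled $|g|\lesssim\|g\|_{H^2}$ in the end, this is harmless.)

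For \eqref{C-4} there is a genuine gap in the treatment of the interior region. You propose to reduce all of $r\leq 2\langle t\rangle/3$ to \eqref{C-12} on the grounds that $\langle t-r\rangle\lesssim\langle t\rangle$ and $\langle r\rangle^{1/2}\lesssim\langle t\rangle^{1/2}$ there, ``with room to spare.'' But these bounds go the wrong way: they show $\langle r\rangle^{1/2}\langle t-r\rangle\lesssim\langle t\rangle^{3/2}$, which does \emph{not} imply $\langle r\rangle^{1/2}\langle t-r\rangle\lesssim\langle t\rangle$. Indeed for $r\sim\langle t\rangle/2$ one has $\langle r\rangle^{1/2}\langle t-r\rangle\sim\langle t\rangle^{3/2}$, so \eqref{C-12} would only yield
\begin{equation}\nonumber
\langle r\rangle^{1/2}\langle t-r\rangle|D^2\Gamma^\alpha Y|\lesssim\langle t\rangle^{1/2}\,\mathcal{X}_{|\alpha|+4}^{1/2},
\end{equation}
which is off by an unbounded factor $\langle t\rangle^{1/2}$. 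Note also that \eqref{C-4} is \emph{genuinely stronger} than \eqref{C-12} at such $r$: it encodes decay $|D^2\Gamma^\alpha Y|\lesssim\langle t\rangle^{-3/2}$, whereas \eqref{C-12} only gives $\langle t\rangle^{-1}$. That extra half power comes from the angular Sobolev trade, so the radial integration-by-parts argument cannot be confined to a light-cone collar $\langle t\rangle/3\lesssim r\leq 5\langle t\rangle/2$.

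The fix, and what the paper actually does, is to use \eqref{C-12} only for the trivial range $r\leq 1$ (where $\langle r\rangle^{1/2}\lesssim 1$ and $\langle t-r\rangle\lesssim\langle t\rangle$ do suffice), and to run the spherical $H^1(\mathbb{S}^1)\hookrightarrow L^\infty(\mathbb{S}^1)$ plus radial integration-by-parts argument on the \emph{entire} range $1\leq r$, with a single radial cutoff $\widetilde\varphi^t$ equal to $1$ on $r\leq\frac{5}{2}\langle t\rangle$ and supported in $r\leq 3\langle t\rangle$ (an outer cutoff, not an annular one). On $\mathrm{supp}\,\widetilde\varphi^t$ one has $r\lesssim\langle t\rangle$, so $\langle t-r\rangle$ is always dominated by the weight in the definition \eqref{Defn} of $\mathcal{X}$, and the three resulting $L^2$ terms are controlled by $\mathcal{X}_{|\alpha|+4}^{1/2}$ and $\mathcal{E}_{|\alpha|+3}^{1/2}$ exactly as in your light-cone computation. (Also, when $\partial_\rho$ hits the weight $\langle t-\rho\rangle^2$, the simplest bound is $\langle t-\rho\rangle\leq\langle t-\rho\rangle^2$; the two alternatives you sketch are correct in spirit but more roundabout.)
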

\begin{proof}
Let us use the cutoff function $\varphi^t$ in Lemma \ref{DecayEF}.
Note that the weight in the definition of $\mathcal{X}_{|\alpha| +
4}^{\frac{1}{2}}(Y)$ is equivalent to $\langle t\rangle$ on the
support of $1 - \varphi^t(x)$. Thus, by applying the simple Sobolev
imbedding $H^2(\mathbb{R}^2)\hookrightarrow L^\infty(\mathbb{R}^2)$,
we have
\begin{eqnarray}\nonumber
 t|D^2\Gamma^\alpha Y| &\lesssim&  t
  \|\big(1 - \varphi^t(y)\big)D^2\Gamma^\alpha Y\|_{L^2}\\\nonumber
&&+\  t  \|\big(1 -
  \varphi^t(y)\big)\nabla^2 D^2\Gamma^\alpha Y\|_{L^2}\\\nonumber
&&+\  t  \|\nabla\big(1 -
  \varphi^t(y)\big)\nabla D^2\Gamma^\alpha Y\|_{L^2}\\\nonumber
&&+\  t  \|\nabla^2 \big(1 -
  \varphi^t(y)\big)D^2\Gamma^\alpha Y\|_{L^2}\\\nonumber
&\lesssim& \mathcal{X}_{|\alpha| + 4}^{\frac{1}{2}} +
  \langle t\rangle^{- 1}\mathcal{X}_{|\alpha| +
  3}^{\frac{1}{2}} + \langle t\rangle^{- 2}\mathcal{X}_{|\alpha| +
  2}^{\frac{1}{2}}\\\nonumber
&\lesssim& \mathcal{X}_{|\alpha| + 4}^{\frac{1}{2}}.
\end{eqnarray}
This proves \eqref{C-12}.

Next, let us prove \eqref{C-4}. For $r \leq 1$, \eqref{C-4} is an
immediate consequence of \eqref{C-12}. We consider the case when $r
\geq 1$. Similarly as in proving \eqref{C-3}, one has
\begin{eqnarray}\nonumber
&&r\langle t-r\rangle^2|f(r\omega)|^2 \lesssim r\langle
  t-r\rangle^2\sum_{|\beta| \leq 1}\int_{\mathbb{S}^1}
  |\Omega^\beta f(r\omega)|^2d\sigma\\\nonumber
&&= - \sum_{|\beta| \leq 1}\int_{\mathbb{S}^1}d\sigma \int_r^\infty
  r\partial_\rho
  [\langle t-\rho\rangle^2 |\Omega^\beta f(\rho\omega)|^2] d\rho\\\nonumber
&&\lesssim \sum_{|\beta| \leq 1}\int_{\mathbb{S}^1}d\sigma
  \int_r^\infty \big[\langle t-\rho\rangle^2 |\Omega^\beta f(\rho\omega)||\partial_r
  \Omega^\beta f(\rho)|+\langle t-\rho\rangle |\Omega^\beta f(\rho\omega)|^2\big]  \rho d\rho\\\nonumber
&&\lesssim \sum_{|\beta| \leq 1}\int_{\mathbb{S}^1}d\sigma
  \int_r^\infty \big[\langle t-\rho\rangle^2 |\partial_r
  \Omega^\beta f(\rho\omega)|^2+\langle t-\rho\rangle^{2}
  |\Omega^\beta f(\rho\omega)|^2\big]  \rho d\rho\\\nonumber
&&= \sum_{|\beta| \leq 1}\big[\|\langle t-r\rangle \partial_r
  \Omega^\beta f\|_{L^2} + \|\langle t - r\rangle\Omega^\beta  f\|_{L^2}\big]^2.
\end{eqnarray}
Now let us choose another cutoff function $\widetilde{\varphi} \in
C_0^\infty(\mathbb{R}^2)$ which is radial and satisfies
\begin{equation}\nonumber
\widetilde{\varphi} = \begin{cases}1,\quad {\rm if}\ r \leq \frac{5}{2}\\
0,\quad {\rm if}\  r > 3\end{cases},\quad
|\nabla\widetilde{\varphi}| \leq 3.
\end{equation}
For each fixed $t \geq 4$, let $\widetilde{\varphi}^t(y) =
\widetilde{\varphi}(y/\langle t\rangle)$. Clearly, one has
$$\widetilde{\varphi}^t(y) \equiv 1\ \ {\rm for}\ r \leq \frac{5\langle t \rangle}{2},\quad
\widetilde{\varphi}^t(y) \equiv 0\ \ {\rm for}\ r \geq 3\langle t
\rangle$$ and
$$|\nabla\widetilde{\varphi}^t(y)| \leq 3\langle t\rangle^{-1}.$$ Taking $f =
\widetilde{\varphi}^tD^2\Gamma^\alpha Y$, one has
\begin{eqnarray}\nonumber
r\langle t-r\rangle^2|\widetilde{\varphi}^tD^2\Gamma^\alpha Y|^2
&\lesssim& \sum_{|\beta| \leq 1}\|\langle
  t-r\rangle\widetilde{\varphi}^t \partial_r\Omega^\beta
  D^2\Gamma^\alpha Y\|_{L^2}^2\\\nonumber
&&+\ \sum_{|\beta| \leq 1}\|\langle t-r\rangle
  \partial_r\widetilde{\varphi}^t\Omega^\beta D^2
  \Gamma^\alpha Y\|_{L^2}^2\\\nonumber
&& +\ \sum_{|\beta| \leq 1}\|\langle t-
  r\rangle\widetilde{\varphi}^t\Omega^\beta
  D^2\Gamma^\alpha Y\|_{L^2}^2\\\nonumber
&\lesssim& \|\langle t-r\rangle\widetilde{\varphi}^t
  D^2\Gamma^{|\alpha| + 2}Y\|_{L^2}^2
  + \|1_{{\rm supp}\widetilde{\varphi}^t}
  D^2\Gamma^{|\alpha| + 1}Y\|_{L^2}^2,
\end{eqnarray}
which gives \eqref{C-4} for $r \geq 1$.
\end{proof}

The next lemma gives a preliminary estimate for the weighted $L^2$
generalized energy norm $\mathcal{X}_{\kappa}$.   We remark that the
definition of $\mathcal{X}_{\kappa}$ here is different from the
original one appeared in \cite{KS96} and \cite{Sideris00} where similar results are obtained in 3D. For a self-contained presentation, we still include the detailed proof below.
\begin{lem}\label{WE-1}
There holds
\begin{equation}\nonumber
\mathcal{X}_{2}^{\frac{1}{2}} \lesssim \mathcal{E}_2^{\frac{1}{2}} + \langle
t\rangle\|(\partial_t^2 - \Delta)Y\|_{L^2}.
\end{equation}
\end{lem}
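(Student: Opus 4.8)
The plan is to estimate the weighted quantity $\mathcal{X}_2$ pointwise-in-$t$ by splitting the $D^2Y$ factor into its "good" (angular/null) part and the remaining radial-radial part, and then to trade the second radial derivative for the wave operator $\partial_t^2 - \Delta$ using the standard identity that, away from the origin, $\Delta$ differs from $\partial_r^2$ by lower-order terms weighted by $r^{-1}$ and by angular derivatives. Concretely, for $r \geq 1$ I would write $|D^2 \Gamma^\alpha Y| \lesssim |(\partial_t^2 - \Delta)Y| + r^{-1}|D\Omega^{\le 1} D Y| + \text{(good derivative terms)}$, where the good-derivative contributions are controlled by $\langle t\rangle^{-1}$ times first-order generalized energy (this is exactly the kind of estimate flagged as "the estimate for good derivatives" in Section \ref{lemmas}). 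Multiplying by the weight $\langle t-r\rangle$ on $r \le 2\langle t\rangle$ and $\langle t\rangle$ on $r > 2\langle t\rangle$, and noting both weights are $\lesssim \langle t\rangle$ everywhere on the relevant regions, one gets $\mathcal{X}_2 \lesssim \langle t\rangle \|(\partial_t^2 - \Delta)Y\|_{L^2} + \|D\Omega^{\le 1} D Y\|_{L^2} + (\text{good terms})$, and the last two batches are bounded by $\mathcal{E}_2^{1/2}$ (or by the $\mathcal{E}_2$-part of the norm), since they involve only at most two derivatives of $Y$ of which one is a $\partial$ or $\Omega$.

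The key steps, in order, are: (i) reduce to $r \geq 1$, handling $r \le 1$ directly by Sobolev imbedding $H^2 \hookrightarrow L^\infty$ absorbed into $\mathcal{E}_2^{1/2}$ since there the weight is bounded; (ii) decompose $D^2$ in polar-type coordinates to isolate $\partial_r^2$, and replace $\partial_r^2 \Gamma^\alpha Y$ by $(\partial_t^2 - \Delta)\Gamma^\alpha Y$ plus $\partial_t^2$-terms plus $r^{-1}$-weighted lower order terms plus angular second derivatives (the classical commutator identity for $\Delta$ in dimension two); (iii) observe $\partial_t^2 Y$ is itself a second space-time derivative, so $\langle t-r\rangle |\partial_t^2 Y|$ over the light-cone region and $\langle t\rangle|\partial_t^2 Y|$ outside need their own treatment — but these are again handled by bounding against the $\partial_t^2 - \Delta$ term after moving $\Delta Y$ to the other side, i.e. one really only estimates the combination; (iv) insert the weights, use that $\langle t-r\rangle \lesssim \langle t\rangle$ on $\{r \le 2\langle t\rangle\}$ and trivially $\langle t\rangle = \langle t\rangle$ on the complement, and take $L^2$ in $y$; (v) collect the "good" terms — the $r^{-1}$-weighted and angular ones — and bound them by $\mathcal{E}_2^{1/2}$, using that $r^{-1}\langle t-r\rangle$ and $r^{-1}\langle t\rangle$ are $\lesssim 1$ on the regions where $r \gtrsim \langle t\rangle$ and, where $r \ll \langle t\rangle$, using instead the interior estimate \eqref{C-12}-type bound or simply the fact that there $|D^2 Y|$ is directly controlled. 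The cleanest route is probably to phrase everything through the pointwise algebraic inequality $\langle t-r\rangle |D^2\Gamma^\alpha Y| \lesssim \langle t\rangle|(\partial_t^2-\Delta)\Gamma^\alpha Y| + |D\,\widetilde\Omega^{\le 1} \Gamma^\alpha Y| + |D\Gamma^\alpha Y|$ valid on $r \le 2\langle t\rangle$, and its analogue outside, then square and integrate.

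The main obstacle, as the authors themselves signal in the paragraph preceding the lemma, is precisely that the naive approach — estimating the $r$-weighted null form of $r(\partial_t^2 - \Delta)Y$ — fails because $r$ is not an $\mathcal{A}_2$ weight for the zero-order Riesz transforms in two dimensions; this is why the modified $\mathcal{X}_\kappa$ with the $\langle t\rangle$-weight (rather than an $r$-weight) outside $\{r \le 2\langle t\rangle\}$ is used. So the delicate point is to make sure that, on the far region $r > 2\langle t\rangle$ where one might have expected an $r$-weight, the weight is only $\langle t\rangle$, and there the extra decay/largeness of $r$ relative to $\langle t\rangle$ makes the $r^{-1}$-weighted "good" terms harmless — i.e. $\langle t\rangle r^{-1} \le \tfrac12$ there — so no $\mathcal{A}_2$ issue ever arises. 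I expect the bulk of the remaining work to be bookkeeping: tracking that every leftover term carries at most one $\Gamma$ beyond the $D^2$ and hence sits inside $\mathcal{E}_2^{1/2}$, and verifying the commutator identity for $\Delta$ produces only the advertised terms. I would finish by combining the two regions and the $r \le 1$ case to conclude $\mathcal{X}_2 \lesssim \mathcal{E}_2^{1/2} + \langle t\rangle\|(\partial_t^2 - \Delta)Y\|_{L^2}$.
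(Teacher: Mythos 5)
Your overall plan — polar decomposition, trade $\partial_r^2$ for the wave operator, split into regions, note $\langle t-r\rangle\lesssim\langle t\rangle$ on $\{r\le 2\langle t\rangle\}$ — matches the paper's. But the step you lean on throughout, the ``pointwise algebraic inequality''
$$\langle t-r\rangle\,|D^2 Y|\;\lesssim\;\langle t\rangle\,|(\partial_t^2-\Delta)Y|+|D\widetilde\Omega^{\le 1}Y|+|DY|,$$
is false, and this is a genuine gap. The problem is the pure spatial Hessian: $\partial_i\partial_j Y$ is not pointwise controlled by $\Delta Y$ and first-order quantities (a Calder\'on--Zygmund phenomenon, not a pointwise one — consider $Y=x_1x_2$, where the mixed second derivative is $1$ but $\Delta Y=0$). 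Your polar rewrite $\partial_i\partial_j=\omega_i\omega_j\partial_r^2+(\text{terms}\sim r^{-1}\nabla,\ r^{-1}\nabla\Omega,\dots)$ turns $\partial_r^2$ into $\Delta$ plus $(\Delta-\partial_r^2)$-type corrections, but once you put the weight $\langle t-r\rangle$ on those corrections you get factors $\langle t-r\rangle/r$, which are unbounded near $r\approx 1$, $t$ large; so the ``good'' terms are not $\lesssim\mathcal{E}_2^{1/2}$ pointwise as you claim. The $\mathcal{A}_2$ remark in the paper is exactly about this: a naive weighted Hessian-by-Laplacian reduction would require a weighted Calder\'on--Zygmund estimate that is unavailable here.

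The paper's proof evades this by an integration-by-parts identity at the level of $L^2$-norms, not pointwise: writing
$$\|(t-r)\partial_i\partial_j Y\|_{L^2}^2 = 2\int(t-r)\omega_i\,\partial_jY\,\partial_i\partial_jY\,dy - 2\int(t-r)\omega_j\,\partial_jY\,\Delta Y\,dy + \int(t-r)^2|\Delta Y|^2\,dy,$$
which after Cauchy--Schwarz and absorbing $\tfrac12\|(t-r)\partial_i\partial_jY\|_{L^2}^2$ reduces the weighted Hessian norm to $\|(t-r)\Delta Y\|_{L^2}$ plus $\|\nabla Y\|_{L^2}$. Only the scalar quantity $(t-r)\Delta Y$ then receives a \emph{pointwise} bound (the identity $(t^2-r^2)\Delta Y=-t^2(\partial_t^2-\Delta)Y-r^2(\Delta Y-\partial_r^2Y)+(t\partial_t-r\partial_r)(t\partial_t+r\partial_r-1)Y$, giving $|(t-r)\Delta Y|\lesssim|D\Gamma Y|+|DY|+t|(\partial_t^2-\Delta)Y|$). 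The mixed derivatives $\partial_t\partial_j Y$ are handled by a different pointwise identity involving $(\partial_t-\partial_r)(t\partial_t+r\partial_r-1)$, and $\partial_t^2 Y$ by moving $\Delta Y$ to the other side and reusing the integrated Hessian bound. So: your outline is broadly right in spirit, but you cannot work purely pointwise across all of $D^2$; you need the integration-by-parts reduction for the spatial Hessian before invoking the pointwise control on $(t-r)\Delta Y$.
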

\begin{proof}
First of all, one may use the decomposition for gradient operator
\begin{equation}\nonumber
\nabla = \omega\partial_r + \frac{\omega^\perp}{r}\partial_\theta
\end{equation}
and the expression of Laplacian in polar coordinate
\begin{equation}\nonumber
\Delta = \partial_r^2 + \frac{1}{r}\partial_r +
\frac{1}{r^2}\partial_\theta^2
\end{equation}
to derive that
\begin{eqnarray}\label{C-5}
|\Delta Y - \partial_r^2Y| \leq \frac{|\partial_rY|}{r} +
\frac{|\partial_\theta^2Y|}{r^2} \lesssim \frac{|\nabla Y| +
|\nabla\Omega Y|}{r}.
\end{eqnarray}
Let us further write
\begin{eqnarray}\nonumber
(t^2 - r^2)\Delta Y &=& - t^2(\partial_t^2 - \Delta) Y - r^2(\Delta
  Y - \partial_r^2Y) + t^2\partial_t^2Y - r^2\partial_r^2Y\\\nonumber
&=& - t^2(\partial_t^2 - \Delta) Y - r^2(\Delta
  Y - \partial_r^2Y)\\\nonumber
&&+\ (t\partial_t - r\partial_r)(t\partial_t +
  r\partial_r - 1)Y.
\end{eqnarray}
Hence, using \eqref{C-5}, one has
\begin{eqnarray}\label{C-6}
|(t - r)\Delta Y| &\lesssim& t|(\partial_t^2 - \Delta) Y| + r|\Delta
  Y - \partial_r^2Y| + |DSY| + |DY|\\\nonumber
&\lesssim& |D\Gamma Y| + |DY| + t|(\partial_t^2 - \Delta) Y|.
\end{eqnarray}
We
remark that in \eqref{C-6}, $r$ can be even larger than $2\langle
t\rangle$.

Next, using \eqref{C-6} and integration by parts, one immediately
has
\begin{eqnarray}\nonumber
&&\|(t - r)\partial_i\partial_j Y\|_{L^2}^2 =
  \int(t - r)^2\partial_i
  \partial_j Y\partial_i\partial_j Ydy\\\nonumber
&&= 2\int(t - r)\omega_i\partial_j
  Y\partial_i\partial_j Ydy - \int(t - r)^2\partial_j
  Y\Delta\partial_j Ydy\\\nonumber
&&= 2\int(t - r)\omega_i\partial_j
  Y\partial_i\partial_j Ydy - 2\int(t - r)\omega_j\partial_j
  Y\Delta Ydy\\\nonumber
&&\quad +\ \int(t - r)^2\Delta Y\Delta
  Ydy\\\nonumber
&&\leq 10\|\nabla Y\|_{L^2}^2  + \frac{1}{2}
  \|(t - r)\partial_i\partial_jY\|_{L^2}^2
  + \int(t - r)^2\Delta Y\Delta Ydy.
\end{eqnarray}
Using \eqref{C-6}, one obtains that
\begin{eqnarray}\label{C-7}
\|(t - r)\partial_i\partial_j Y\|_{L^2} \lesssim
\mathcal{E}_2^{\frac{1}{2}} + \langle t\rangle\|(\partial_t^2 -
\Delta)Y\|_{L^2}.
\end{eqnarray}

To estimate $|(t - r)\partial_{t}\nabla Y|$, let us first write
\begin{eqnarray}\nonumber
(t - r)\partial_t\partial_rY &=& - (\partial_t -
  \partial_r)(t\partial_t + r\partial_r - 1)Y
  + t\partial_t^2Y - r\partial_r^2Y\\\nonumber
&=& - (\partial_t - \partial_r)(t\partial_t +
  r\partial_r - 1)Y + t(\partial_t^2 - \Delta)Y\\\nonumber
&&+\ (t - r)\Delta Y + r(\Delta Y - \partial_r^2Y).
\end{eqnarray}
Then using \eqref{C-5} and \eqref{C-6}, one has
\begin{eqnarray}\label{C-10}
|(t - r)\partial_t\partial_rY| \lesssim |D\Gamma Y| + |DY| +
t|(\partial_t^2 - \Delta) Y|.
\end{eqnarray}
Consequently, we have
\begin{eqnarray}\nonumber
|(t - r)\partial_t\partial_j Y| &=& \big|(t - r)
  \partial_t\omega_j\partial_r Y + (t -
  r) r^{-1}\omega_j^\perp\partial_t\partial_\theta Y\big|\\\nonumber
&\leq& |(t - r)\omega_j\partial_t\partial_r Y| + \big|r^{-1}
  (t\partial_t + r\partial_r)\partial_\theta
  Y - \partial_\theta(\partial_t +
  \partial_r) Y\big|\\\nonumber
&\lesssim& |(t - r)\omega_j\partial_t\partial_r Y| +
  r^{-1}|\partial_\theta SY| + |\partial_\theta
  \partial_tY| + |\partial_\theta
  (\omega\cdot\nabla Y)|\\\nonumber
&\lesssim& |(t - r)\omega_j\partial_t\partial_r Y| +
  r^{-1}|\partial_\theta SY| +
  |\partial_\theta\partial_tY|\\\nonumber
&&\quad +\ | \omega^\perp\cdot\nabla Y| + |\omega\cdot
  \partial_\theta\nabla Y|\\\nonumber
&\lesssim& |D\Gamma Y| + |DY| + t|(\partial_t^2 - \Delta) Y|,
\end{eqnarray}
which gives that
\begin{eqnarray}\label{C-8}
\|(t - r)\partial_t\partial_j Y\|_{L^2} \lesssim
\mathcal{E}_2^{\frac{1}{2}} + \langle t\rangle\|(\partial_t^2 -
\Delta)Y\|_{L^2}.
\end{eqnarray}

At last, using \eqref{C-6}, one has
\begin{eqnarray}\nonumber
|(t - r)\partial_t^2 Y| &\leq& \big|(t - r)
  (\partial_t^2 - \Delta)Y\big| + |(t -
  r) \Delta Y|\\\nonumber
&\lesssim& |(t - r)
  (\partial_t^2 - \Delta)Y| + |D\Gamma Y| + |DY|
  + t|(\partial_t^2 - \Delta) Y|\\\nonumber
&\lesssim& |D\Gamma Y| + |DY|
  +  \langle t\rangle|(\partial_t^2 - \Delta) Y|
\end{eqnarray}
if $r \leq 2\langle t\rangle$, and
\begin{eqnarray}\nonumber
\langle t\rangle|\partial_t^2 Y| \leq \langle t\rangle|
  (\partial_t^2 - \Delta)Y\big| + \langle t\rangle|\Delta Y|
\end{eqnarray}
if $r > 2\langle t\rangle$. Hence, by \eqref{C-7}, we have
\begin{eqnarray}\label{C-9}
&&\int_{r \leq 2\langle t\rangle}\langle t - r\rangle^2|\partial_t^2
  Y|^2dy + \int_{r > 2\langle t\rangle}\langle t\rangle^2|\partial_t^2
  Y|^2dy\\\nonumber
&&\lesssim \langle t\rangle^2\|(\partial_t^2 - \Delta)Y\|_{L^2}^2 +
  \mathcal{E}_2^{\frac{1}{2}}(Y) + \int_{r > 2\langle t\rangle}
  \langle t\rangle^2|\Delta Y|^2dy\\\nonumber
&&\lesssim \langle t\rangle^2\|(\partial_t^2 - \Delta)Y\|_{L^2}^2 +
  \mathcal{E}_2.
\end{eqnarray}

Then the lemma follows from \eqref{C-7}, \eqref{C-8} and
\eqref{C-9}.

\end{proof}

At the end of this section, let us show the estimate for good derivatives
$\omega_j\partial_t + \partial_j$ (see some related results in \cite{LSZ13}).
\begin{lem}\label{GoodDeri}
For $\frac{\langle t\rangle}{3} \leq r \leq \frac{5\langle
t\rangle}{2}$, there holds
\begin{equation}\nonumber
\langle t\rangle|\omega_j\partial_tDY + \partial_jDY| \lesssim
|D Y| + |D\Gamma Y| + t|(\partial_t^2 - \Delta) Y|.
\end{equation}
\end{lem}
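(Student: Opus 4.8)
The plan is to decompose the spacetime gradient $D$ of $Y$ into a ``good'' tangential part along the light cone and a ``bad'' part, using the vector fields $S$ and $\Omega$ to trade the bad part for $\Gamma$-derivatives and the wave operator. The key algebraic identity is the standard one that in the region $r\sim\langle t\rangle$ the operators $\omega_j\partial_t+\partial_j$ are ``good'' because $t\partial_t+r\partial_r=S$ and $t\partial_j-x_j\partial_t$ differ from rotations by factors of $r$ and $t$ which are comparable there. Concretely, I would first write, for any scalar function $f$,
\begin{equation}\nonumber
(t-r)(\omega_j\partial_t+\partial_j)f = \omega_j\,(t\partial_t+r\partial_r-1)f - (x_j\partial_t - t\partial_j)f + (\text{lower order in }r,t)\,f,
\end{equation}
so that $(t-r)(\omega_j\partial_t+\partial_j)f$ is controlled by $|Sf|+|\Omega f|+|f|$. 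But here I only have the factor $\langle t\rangle$, not $\langle t-r\rangle$, on the left, so I must account for the gap: on $r\le (t-r)/C$-type regions one simply uses $\langle t\rangle\lesssim\langle t-r\rangle$, while on the near-cone region $|t-r|\lesssim\langle t\rangle$ one has to be more careful, and this is where the wave operator enters.

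The main steps, in order: (i) apply the identity above with $f$ replaced by each component of $DY$ (i.e.\ $\partial_t Y^i$, $\partial_1 Y^i$, $\partial_2 Y^i$), which introduces $S\,DY$ and $\Omega\,DY$, hence $\nabla\Gamma Y$-type terms, plus zeroth-order terms absorbed into $|\nabla Y|$; (ii) handle the coefficient $(t-r)$ versus $\langle t\rangle$ discrepancy by splitting $\langle t\rangle(\omega_j\partial_t+\partial_j)DY = (t-r)(\omega_j\partial_t+\partial_j)DY + (\langle t\rangle - t + r)(\omega_j\partial_t+\partial_j)DY$; on the support $\frac{\langle t\rangle}{3}\le r\le\frac{5\langle t\rangle}{2}$ the second term's coefficient is $O(\langle t-r\rangle)$ plus $O(1)$, and the first factor of $(\omega_j\partial_t+\partial_j)$ combined with a second derivative is exactly the kind of term estimated in Lemma~\ref{WE-1} style bounds; (iii) to finish the near-cone piece, use the already-derived expressions for $(t-r)\partial_i\partial_j Y$, $(t-r)\partial_t\partial_j Y$ and $(t-r)\partial_t^2 Y$ from the proof of Lemma~\ref{WE-1} (namely \eqref{C-7}, \eqref{C-8}, \eqref{C-10}), which precisely say $\langle t-r\rangle|D^2 Y| \lesssim |D\Gamma Y| + |DY| + t|(\partial_t^2-\Delta)Y|$ pointwise; since $|\omega_j\partial_t DY + \partial_j DY|\le 2|D^2 Y|$, multiplying by the bounded (on this region) quantity $\langle t\rangle/\langle t-r\rangle$ is not allowed directly — instead I keep the good-derivative structure and only use $|D^2 Y|$ bounds where the weight is genuinely $\langle t-r\rangle$, routing the rest through $S$ and $\Omega$.

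The delicate point — and the main obstacle — is that $\langle t\rangle$ is a \emph{larger} weight than $\langle t-r\rangle$ near the light cone, so a naive substitution of the Lemma~\ref{WE-1} pointwise inequalities loses. The resolution is that for the \emph{good} derivative combination $\omega_j\partial_t+\partial_j$ one genuinely gains: commuting it through $DY$ and using $S$, $\Omega$ produces $(t-r)$-weighted $D^2Y$ terms (already estimated) \emph{plus} terms where the extra $\langle t\rangle$ is compensated because the good derivative kills the worst direction. Thus I expect the proof to reduce, after the decomposition in step (ii), to: (a) the $S\,DY$ and $\Omega\,DY$ contributions, which give $|\nabla\Gamma Y|$ directly with the correct $\langle t\rangle^0$ or better weight since $r\sim\langle t\rangle$ converts an $r^{-1}$ into $\langle t\rangle^{-1}$; (b) the residual term $\langle t\rangle(\partial_t^2 - \partial_r^2)Y = \langle t\rangle(\partial_t^2-\Delta)Y + \langle t\rangle(\Delta - \partial_r^2)Y$, where the last term is $O(\langle t\rangle r^{-1}(|\nabla Y|+|\nabla\Omega Y|)) = O(|\nabla Y| + |\nabla\Gamma Y|)$ by \eqref{C-5} and $r\sim\langle t\rangle$; assembling (a) and (b) yields exactly $\langle t\rangle|\omega_j\partial_t DY + \partial_j DY| \lesssim |\nabla Y| + |\nabla\Gamma Y| + t|(\partial_t^2-\Delta)Y|$, which is the claim.
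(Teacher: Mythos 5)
Your final paragraph, items (a) and (b), lands on essentially the paper's proof: decompose $\partial_j=\omega_j\partial_r+\tfrac{\omega_j^\perp}{r}\partial_\theta$ so the good derivative becomes $\omega_j(\partial_t+\partial_r)+\tfrac{\omega_j^\perp}{r}\partial_\theta$, use $t(\partial_t+\partial_r)=S+(t-r)\partial_r$ so that the factor $t$ is traded for $S$ plus the $(t-r)$-weighted second derivatives already controlled in the proof of Lemma~\ref{WE-1}, absorb all $\partial_\theta$-terms via $\tfrac{t}{r}\sim 1$ on the region $\tfrac{\langle t\rangle}{3}\le r\le\tfrac{5\langle t\rangle}{2}$, and recover the wave operator from $\langle t\rangle(\partial_t^2-\partial_r^2)Y=\langle t\rangle(\partial_t^2-\Delta)Y+\langle t\rangle(\Delta-\partial_r^2)Y$ together with \eqref{C-5}. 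That is, up to organisational details (the paper first bounds $t(\partial_t+\partial_r)\partial_r Y$ by splitting $(\partial_t+\partial_r)\partial_r=\tfrac12[(\partial_t+\partial_r)(\partial_t-\partial_r)+(\partial_t+\partial_r)^2]$), this is the argument actually carried out.

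The route you take to get there, however, has concrete problems. The identity in step~(i),
\begin{equation}\nonumber
(t-r)(\omega_j\partial_t+\partial_j)f=\omega_j(t\partial_t+r\partial_r-1)f-(x_j\partial_t-t\partial_j)f+(\text{lower order})f,
\end{equation}
is false as written: subtracting the left side from the claimed main terms on the right leaves $2r\omega_j\partial_rf+\omega_j^\perp\partial_\theta f-\omega_j f$, and the $2r\omega_j\partial_r f$ term carries a factor $r$ and is in no sense lower order. More fundamentally, the term $x_j\partial_t-t\partial_j$ is a Lorentz boost, which is \emph{not} among the vector fields $\{\partial_t,\partial_1,\partial_2,\widetilde{\Omega},\widetilde{S}\}$ used here — precisely because the incompressible elastodynamics system lacks Lorentz invariance — so even a corrected version of this identity could not convert the boost contribution into an allowed $|\nabla\Gamma Y|$ term; your claim that ``$t\partial_j-x_j\partial_t$ differ from rotations by factors of $r$ and $t$'' is not correct (near the cone the boost is $\sim r$ times the \emph{bad} derivative $\omega_j\partial_t-\partial_j$, not a rotation). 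The split in step~(ii), $\langle t\rangle=(t-r)+(\langle t\rangle-t+r)$, also does not help: on the stated region $\langle t\rangle-t+r\gtrsim\langle t\rangle/3$, so the second coefficient is comparable to $\langle t\rangle$, not $O(\langle t-r\rangle)+O(1)$. In short, steps (i)--(iii) are a dead end that your final paragraph quietly abandons; the proof should start from the radial decomposition of the good derivative and the identity $t(\partial_t+\partial_r)=S+(t-r)\partial_r$, which is what you end up doing anyway.
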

\begin{proof}
First, let us calculate that
\begin{eqnarray}\nonumber
&&t(\partial_t + \partial_r)(\partial_t - \partial_r)Y\\\nonumber
&&= t(\partial_t^2 - \Delta)Y + t(\Delta - \partial_r^2)Y\\\nonumber
&&= t(\partial_t^2 - \Delta)Y + \frac{t}{r}\big(\partial_rY +
  \frac{\partial_\theta^2 Y}{r}\big).
\end{eqnarray}
Consequently, we have
\begin{eqnarray}\label{G-1}
&&t|(\partial_t + \partial_r)(\partial_t -
\partial_r)Y|\\\nonumber
&&\leq t|(\partial_t^2 - \Delta)Y| + \frac{t}{r}\big(|\nabla Y| +
   \frac{|(x_i\partial_j - x_j\partial_i) \partial_\theta
   Y|}{r}\big)\\\nonumber
&&\lesssim t|(\partial_t^2 - \Delta)Y| + \frac{t}{r}\big(|\nabla Y|
  + |\nabla\Omega Y|\big).
\end{eqnarray}
Next, using \eqref{C-5}, \eqref{C-6}, \eqref{C-10} and \eqref{G-1},
we calculate that
\begin{eqnarray}\nonumber
&&t|(\partial_t + \partial_r)\partial_rY|\\\nonumber &&\leq
\frac{t}{2}|(\partial_t +
\partial_r)(\partial_t -
  \partial_r)Y| + \frac{t}{2}|(\partial_t + \partial_r)(\partial_t +
  \partial_r)Y|\\\nonumber
&&\lesssim \frac{t}{2}|(\partial_t + \partial_r)(\partial_t -
  \partial_r)Y| + \frac{1}{2}|S(\partial_t +
  \partial_r)Y| + \frac{1}{2}|(t - r)\partial_r(\partial_t +
  \partial_r)Y|\\\nonumber
&&\lesssim \frac{t}{2}|(\partial_t + \partial_r)(\partial_t -
  \partial_r)Y| + \frac{1}{2}|S(\partial_t +
  \partial_r)Y|\\\nonumber
&&\quad +\ \frac{1}{2}|(t - r)\partial_{tr}^2Y| + \frac{1}{2}|(t -
  r)(\Delta - \partial_r^2)Y| + \frac{1}{2}|(t - r)\Delta
  Y|\\\nonumber
&&\lesssim t|(\partial_t^2 - \Delta)Y| + (1 +
  \frac{t}{r})\big(|\nabla Y| + |\nabla\Gamma Y|\big).
\end{eqnarray}
Hence, we have
\begin{eqnarray}\label{G-2}
&&t|(\omega_j\partial_t + \partial_j)\partial_kY|\\\nonumber &&=
  \big|t\omega_j(\partial_t + \partial_r)\partial_kY +
  \frac{t}{r}\omega_j^\perp\partial_\theta
  \partial_kY\big|\\\nonumber
&&\lesssim |t\omega_j(\partial_t + \partial_r)\omega_k\partial_rY| + |\frac{t}{r}\frac{\partial_\theta}{r}\partial_r Y| +
  \frac{t}{r}\big(|\partial_\theta
  \partial_kY| + |\partial_r\partial_\theta Y|\big)\\\nonumber
&&\lesssim t|(\partial_t^2 - \Delta)Y| + (1 +
  \frac{t}{r})\big(|D Y| + |D\Gamma Y|\big).
\end{eqnarray}

At last, let us calculate that
\begin{eqnarray}\nonumber
t|(\partial_t + \partial_r)\partial_tY| &\leq& t|(\partial_t +
  \partial_r)(\partial_t - \partial_r)Y| + t|(\partial_t +
  \partial_r)\partial_rY|\\\nonumber
&\leq& t|(\partial_t +
  \partial_r)(\partial_t - \partial_r)Y| + t|\omega_j\omega_k
  (\omega_j\partial_t +
  \partial_j)\partial_kY|\\\nonumber
&&+\ t|\omega_j\partial_j\omega_k\partial_kY|
\end{eqnarray}
which together with \eqref{G-1} and \eqref{G-2} gives that
\begin{eqnarray}\label{G-3}
&&t|(\omega_j\partial_t + \partial_j)\partial_tY|\\\nonumber &&\leq
  t\big|(\partial_t + \partial_r)\partial_tY| +
  |r^{-1}\partial_\theta\partial_tY\big|\\\nonumber
&&\lesssim t|(\partial_t^2 - \Delta)Y| + (1 +
  \frac{t}{r})\big(|\nabla Y| + |\nabla\Gamma Y|\big).
\end{eqnarray}
Then the lemma follows from \eqref{G-2} and \eqref{G-3}.
\end{proof}

\section{Estimate of the $L^2$ Weighted Norm}\label{WE}

Now we are going to estimate the $L^2$ weighted generalized energy
$\mathcal{X}_{\kappa}$. First of all, we prove the following lemma
which says that the $L^2$ norm of $\nabla\Gamma^\alpha p$ and
$\|(\partial_t^2 - \Delta)\Gamma^\alpha Y\|_{L^2}$ which involve the
$(|\alpha| + 2)$-th order derivatives of unknowns can be bounded by
certain matters which only involve $(|\alpha| + 1)$-th order
derivatives of unknowns. This surprising result is based on the
inherent special structures of nonlinearities in the system.

\begin{lem}\label{SN-1}
Suppose that $\|\nabla Y\|_{L^\infty} \leq \delta$ for some
absolutely positive constant $\delta < 1$. Then there holds
\begin{equation}\nonumber
\|\nabla\Gamma^\alpha p\|_{L^2} + \|(\partial_t^2 -
\Delta)\Gamma^\alpha Y\|_{L^2} \lesssim \Pi(|\alpha| + 2)
\end{equation}
provided that $\delta$ is small enough, where
\begin{eqnarray}\label{D-9}
&&\Pi(|\alpha| + 2) \lesssim \sum_{\beta + \gamma = \alpha,\
  \gamma \neq \alpha}\big\||\nabla\Gamma^\beta Y||(\partial_t^2
   - \Delta)\Gamma^\gamma Y|\big\|_{L^2}\\\nonumber
&&\quad +\ \sum_{\beta + \gamma = \alpha,
  |\beta| > |\gamma|}\Pi_1 + \sum_{\beta + \gamma = \alpha,
  |\beta| \geq |\gamma|}\Pi_2,
\end{eqnarray}
where $\Pi_1$ and $\Pi_2$ are given by
\begin{eqnarray}\label{D-11}
\Pi_1 = \big\|(- \Delta)^{- \frac{1}{2}}\nabla\cdot\big(\partial_t
\Gamma^\beta Y^1\nabla^\perp\partial_t\Gamma^\gamma Y^2 -
\partial_j\Gamma^\beta Y^1\nabla^\perp\partial_j \Gamma^\gamma
Y^2\big)\big\|_{L^2},
\end{eqnarray}
and
\begin{eqnarray}\label{D-12}
\Pi_2 = \big\|(- \Delta)^{-
\frac{1}{2}}\nabla\cdot\big(\partial_t\Gamma^\beta Y^2
\nabla^\perp\partial_t\Gamma^\gamma Y^1 - \partial_j\Gamma^\beta
Y^2\nabla^\perp
\partial_j\Gamma^\gamma  Y^1\big)\big\|_{L^2}.
\end{eqnarray}
\end{lem}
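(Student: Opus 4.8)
The plan is to remove one derivative from $\nabla\Gamma^\alpha p$ by converting its equation into a Poisson equation and then exploiting the quadratic \emph{Jacobian} structure of the constraint \eqref{Struc-2}. Abbreviate $\Box=\partial_t^2-\Delta$ and write $J(f,g)=\partial_1 f\,\partial_2 g-\partial_2 f\,\partial_1 g$, so that \eqref{Struc-2} reads $\nabla\cdot\Gamma^\alpha Y=g^\alpha$ with $g^\alpha:=\sum_{\beta+\gamma=\alpha}J(\Gamma^\beta Y^2,\Gamma^\gamma Y^1)$; note the identities $J(f,g)=-\nabla\cdot(f\nabla^\perp g)=\nabla\cdot(g\nabla^\perp f)$. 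First I would recast \eqref{Elasticity-D} as $\Box\Gamma^\alpha Y=-(\nabla X)^{-\top}\nabla\Gamma^\alpha p+N^\alpha$, where $N^\alpha$ is the nonlinear sum; by \eqref{Inverse} and $\|\nabla Y\|_{L^\infty}\le\delta$ we have $\|(\nabla X)^{-1}\|_{L^\infty}\lesssim1$, so $\|N^\alpha\|_{L^2}$ is dominated by the first sum in \eqref{D-9}. Using \eqref{Inverse} in the form $(\nabla X)^{-\top}=I+E$ with $E=(\nabla\cdot Y)I-\nabla Y$ and $\|E\|_{L^\infty}\lesssim\delta$ (the $\nabla\cdot Y$ part being quadratic by \eqref{Struc-1}), take the divergence and use $\nabla\cdot\Box\Gamma^\alpha Y=\Box g^\alpha$ to obtain
$$-\Delta\Gamma^\alpha p=\Box g^\alpha+\nabla\cdot(E\nabla\Gamma^\alpha p)-\nabla\cdot N^\alpha ,$$
hence $\nabla\Gamma^\alpha p=\nabla(-\Delta)^{-1}\Box g^\alpha+\nabla(-\Delta)^{-1}\nabla\cdot(E\nabla\Gamma^\alpha p)-\nabla(-\Delta)^{-1}\nabla\cdot N^\alpha$. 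The structural point is that every term below ends up in pure divergence form $\nabla(-\Delta)^{-1}\nabla\cdot(\cdot)$, an $L^2$-bounded operator; this matters because a bare $(-\Delta)^{-1}$ on $\mathbb{R}^2$ would be problematic.

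The last two terms are immediate: $\|\nabla(-\Delta)^{-1}\nabla\cdot N^\alpha\|_{L^2}\lesssim\|N^\alpha\|_{L^2}\lesssim$ (first sum of \eqref{D-9}), while $\|\nabla(-\Delta)^{-1}\nabla\cdot(E\nabla\Gamma^\alpha p)\|_{L^2}\lesssim\|E\|_{L^\infty}\|\nabla\Gamma^\alpha p\|_{L^2}\lesssim\delta\|\nabla\Gamma^\alpha p\|_{L^2}$, to be absorbed on the left once $\delta$ is small. For the principal term I would apply, to each Jacobian in $g^\alpha$, the commutator identity
$$\Box J(f,g)=J(\Box f,g)+J(f,\Box g)+2J(\partial_t f,\partial_t g)-2\sum_k J(\partial_k f,\partial_k g),$$
and rewrite each resulting Jacobian as a divergence via $J(f,g)=-\nabla\cdot(f\nabla^\perp g)$. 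The "derivative‑distributed" pieces $J(\Box\Gamma^\beta Y^2,\Gamma^\gamma Y^1)$ and $J(\Gamma^\beta Y^2,\Box\Gamma^\gamma Y^1)$ ($\beta+\gamma=\alpha$) are written with the factor carrying $\Box$ outside $\nabla^\perp$, so their contribution to $\nabla\Gamma^\alpha p$ has $L^2$ norm $\lesssim\||\Box\Gamma^{\beta}Y|\,|\nabla\Gamma^{\gamma}Y|\|_{L^2}$ with $\Box$ on the higher‑order index; when that index is $<|\alpha|$ this is a term of the first sum of \eqref{D-9} after relabelling, and when it equals $|\alpha|$ the other factor is $Y$ itself, giving $\lesssim\delta\|\Box\Gamma^\alpha Y\|_{L^2}$, again absorbed.

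The null‑form pieces $2\big(J(\partial_t\Gamma^\beta Y^2,\partial_t\Gamma^\gamma Y^1)-\sum_k J(\partial_k\Gamma^\beta Y^2,\partial_k\Gamma^\gamma Y^1)\big)$ must be kept intact: splitting the difference would break the cancellation and leave a non‑integrable $\nabla^\perp\partial_t\Gamma^\gamma Y$ factor. Rewriting each Jacobian through $J(f,g)=-\nabla\cdot(f\nabla^\perp g)$, and when $|\gamma|>|\beta|$ using $J(f,g)=-J(g,f)$ to place the higher‑order index into the $\nabla^\perp$‑slot, one recognises the $L^2$ norm of $\nabla(-\Delta)^{-1}$ of this combination as exactly $2\Pi_2$ (when $|\beta|\ge|\gamma|$) or $2\Pi_1$ (when $|\gamma|>|\beta|$), with the operators of \eqref{D-11}–\eqref{D-12}; summing over $\beta+\gamma=\alpha$ produces $\lesssim\sum_{|\beta|>|\gamma|}\Pi_1+\sum_{|\beta|\ge|\gamma|}\Pi_2$. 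Combining the estimates gives $\|\nabla\Gamma^\alpha p\|_{L^2}\le C\delta\big(\|\nabla\Gamma^\alpha p\|_{L^2}+\|\Box\Gamma^\alpha Y\|_{L^2}\big)+C\,\Pi(|\alpha|+2)$; since moreover $\|\Box\Gamma^\alpha Y\|_{L^2}\le\|(\nabla X)^{-\top}\|_{L^\infty}\|\nabla\Gamma^\alpha p\|_{L^2}+\|N^\alpha\|_{L^2}\lesssim\|\nabla\Gamma^\alpha p\|_{L^2}+\Pi(|\alpha|+2)$, choosing $\delta$ small enough closes the bound for both quantities. I expect the main obstacle to be precisely this last matching: producing the $Q_0$‑type difference from the commutator identity without splitting it, aligning the index ranges $|\beta|>|\gamma|$ versus $|\beta|\ge|\gamma|$ and the $Y^1/Y^2$ placement against \eqref{D-11}–\eqref{D-12} through the antisymmetry of $J$, and verifying that after invoking \eqref{Struc-2} every term is genuinely in divergence form so that $(-\Delta)^{-1}$ never acts alone.
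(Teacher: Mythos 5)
Your argument is correct and is essentially the paper's own proof: both take the divergence of \eqref{Elasticity-D}, use the constraint \eqref{Struc-2} to turn $\nabla\cdot(\partial_t^2-\Delta)\Gamma^\alpha Y$ into the wave operator acting on Jacobians, expand $(\partial_t^2-\Delta)$ over the bilinear form, rewrite every resulting piece in divergence form so that only $L^2$-bounded operators of the type $\nabla(-\Delta)^{-1}\nabla\cdot$ or $(-\Delta)^{-\frac{1}{2}}\nabla\cdot$ act, and close by absorbing the $\delta$-small terms to bound both $\|\nabla\Gamma^\alpha p\|_{L^2}$ and $\|(\partial_t^2-\Delta)\Gamma^\alpha Y\|_{L^2}$ by $\Pi(|\alpha|+2)$. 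The only differences are cosmetic bookkeeping: you write $(\nabla X)^{-\top}=I+E$ and absorb $E\nabla\Gamma^\alpha p$ directly rather than first multiplying the equation by $(\nabla X)^{\top}$ as the paper does, and you use the antisymmetry of the Jacobian to sort the index ranges $|\beta|>|\gamma|$ versus $|\beta|\geq|\gamma|$ in place of the paper's $A_{11}$, $A_{12}$, $A_{13}$ symmetrization, which yields the same terms \eqref{D-11}--\eqref{D-12}.
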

\begin{proof}
By \eqref{Elasticity-D}, one has
\begin{eqnarray}\nonumber
- \nabla\Gamma^\alpha p = (\nabla X)^{\top}(\partial_t^2 - \Delta)
\Gamma^\alpha Y + \sum_{\beta + \gamma = \alpha,\ \gamma \neq
\alpha}C_\alpha^\beta(\nabla \Gamma^\beta Y)^{\top}(\partial_t^2 -
\Delta)\Gamma^\gamma Y.
\end{eqnarray}
Applying the divergence operator to the above equation and then
applying the operator $\nabla(- \Delta)^{-1}$, we obtain that
\begin{eqnarray}\nonumber
\nabla\Gamma^\alpha p &=& \sum_{\beta + \gamma = \alpha,\
  \gamma \neq \alpha}C_\alpha^\beta\big\{\nabla(- \Delta)^{-1}\nabla\cdot[(\nabla\Gamma^\beta Y)^\top(\partial_t^2
   - \Delta)\Gamma^\gamma Y)]\\\nonumber
&&+\ \nabla(- \Delta)^{-1}\nabla\cdot[(\nabla Y)^\top(\partial_t^2 -
  \Delta)\Gamma^\alpha Y]\\\nonumber
&&+\ \nabla(- \Delta)^{-1}\nabla\cdot[(\partial_t^2 -
  \Delta)\Gamma^\alpha Y]\big\}.
\end{eqnarray}
Hence, using the fact that the Riesz operator is bounded in $L^2$,
one has
\begin{eqnarray}\label{D-1}
\|\nabla\Gamma^\alpha p\|_{L^2} &\lesssim& \sum_{\beta + \gamma = \alpha,\
  \gamma \neq \alpha}\|(\nabla\Gamma^\beta Y)^\top(\partial_t^2
   - \Delta)\Gamma^\gamma Y)\|_{L^2}\\\nonumber
&&+\ \|(\nabla Y)^\top(\partial_t^2 - \Delta)\Gamma^\alpha Y\|_{L^2}
  + \|\nabla(- \Delta)^{-1}\nabla\cdot(\partial_t^2
  - \Delta)\Gamma^\alpha Y\|_{L^2}.
\end{eqnarray}
Here we kept the Riesz operator in the last quantity on the right
hand side of the above estimate, which needs further treatments
using the fantastic inherent structures of the system.

First of all, using \eqref{Struc-2}, we compute that
\begin{eqnarray}\label{D-2}
&&\nabla\cdot (\partial_t^2
  - \Delta)\Gamma^\alpha Y\\\nonumber
&&= (\partial_t^2 - \Delta)\sum_{\beta + \gamma = \alpha}C_\alpha^\beta
  \big[\partial_1\Gamma^\beta  Y^2\partial_2\Gamma^\gamma  Y^1
  - \partial_1\Gamma^\gamma  Y^1\partial_2\Gamma^\beta  Y^2\big]\\\nonumber
&&= \sum_{\beta + \gamma = \alpha}C_\alpha^\beta\big\{
  \big[\partial_1(\partial_t^2 - \Delta)\Gamma^\beta  Y^2\partial_2\Gamma^\gamma  Y^1
  - \partial_1\Gamma^\gamma  Y^1\partial_2(\partial_t^2 - \Delta)\Gamma^\beta  Y^2\big]\\\nonumber
&&\quad +\ \sum_{\beta + \gamma = \alpha}
  \big[\partial_1\Gamma^\beta  Y^2\partial_2(\partial_t^2 - \Delta)\Gamma^\gamma  Y^1
  - \partial_1(\partial_t^2 - \Delta)\Gamma^\gamma  Y^1\partial_2\Gamma^\beta  Y^2\big]\\\nonumber
&&\quad +\ 2\sum_{\beta + \gamma = \alpha}
  \big[\partial_1\partial_t\Gamma^\beta  Y^2\partial_2\partial_t\Gamma^\gamma  Y^1
  - \partial_1\partial_t\Gamma^\gamma  Y^1\partial_2\partial_t\Gamma^\beta
  Y^2\big]\\\nonumber
&&\quad -\ 2\sum_{\beta + \gamma = \alpha}\big[\partial_1\partial_j
  \Gamma^\beta  Y^2\partial_2\partial_j\Gamma^\gamma  Y^1
  - \partial_1\partial_j\Gamma^\gamma  Y^1\partial_2
  \partial_j\Gamma^\beta  Y^2\big]\big\}.
\end{eqnarray}
Noting the inherent cancellation relation, one sees that the first
two terms on the right hand side of \eqref{D-2} can be reorganized
to be
\begin{eqnarray}\label{D-3}
\nabla\cdot\sum_{\beta + \gamma = \alpha}C_\alpha^\beta \big[(\partial_t^2 -
\Delta)\Gamma^\gamma  Y^1\nabla^\perp\Gamma^\beta  Y^2 -
(\partial_t^2 - \Delta)\Gamma^\beta  Y^2\nabla^\perp\Gamma^\gamma
Y^1\big].
\end{eqnarray}

We still need to take care of the last two terms on the right hand
side of \eqref{D-2}. We first divide them into three parts:
$$A_{11} + A_{12} + A_{13},$$
where $A_{11}$ in the portion of the summation in which $|\beta| =
|\gamma|$:
\begin{eqnarray}\label{D-4}
A_{11} &=& 2\sum_{\beta + \gamma = \alpha, |\beta| = |\gamma|}C_\alpha^\beta
  \big[\partial_1\partial_t\Gamma^\beta  Y^2\partial_2\partial_t\Gamma^\gamma  Y^1
  - \partial_1\partial_t\Gamma^\gamma  Y^1\partial_2\partial_t\Gamma^\beta
  Y^2\big]\\\nonumber
&& -\ 2\sum_{\beta + \gamma = \alpha, |\beta| = |\gamma|}C_\alpha^\beta
  \big[\partial_1\partial_j\Gamma^\beta  Y^2\partial_2\partial_j
  \Gamma^\gamma  Y^1 - \partial_1\partial_j\Gamma^\gamma  Y^1
  \partial_2\partial_j\Gamma^\beta  Y^2\big]\\\nonumber
&=& - 2\nabla\cdot\sum_{\beta + \gamma = \alpha, |\beta| = |\gamma|}C_\alpha^\beta
  \big[\partial_t\Gamma^\beta  Y^2\nabla^\perp\partial_t\Gamma^\gamma
  Y^1 - \partial_j\Gamma^\beta  Y^2\nabla^\perp\partial_j
  \Gamma^\gamma  Y^1\big],
\end{eqnarray}
$A_{12}$ is responsible for the terms involving time derivatives in
the summation when $|\beta| \neq |\gamma|$:
\begin{eqnarray}\nonumber
A_{12} &=& 2\Big(\sum_{\beta + \gamma = \alpha, |\beta| > |\gamma|}
   + \sum_{\beta + \gamma = \alpha, |\beta| < |\gamma|}\Big)C_\alpha^\beta
   \partial_1\partial_t\Gamma^\beta  Y^2\partial_2\partial_t
   \Gamma^\gamma Y^1\\\nonumber
&&-\ 2\Big(\sum_{\beta + \gamma = \alpha, |\beta| >
  |\gamma|} + \sum_{\beta + \gamma = \alpha, |\beta|
  < |\gamma|}\Big)C_\alpha^\beta\partial_1\partial_t\Gamma^\gamma
  Y^1\partial_2\partial_t\Gamma^\beta Y^2,
\end{eqnarray}
and $A_{13}$ is the portion in the summation when $|\beta| \neq
|\gamma|$ which only involves spatial derivatives:
\begin{eqnarray}\nonumber
A_{13} &=& 2\Big(\sum_{\beta + \gamma = \alpha, |\beta| > |\gamma|}
  + \sum_{\beta + \gamma = \alpha, |\beta| < |\gamma|}\Big)C_\alpha^\beta
  \partial_1\partial_j\Gamma^\gamma  Y^1\partial_2\partial_j\Gamma^\beta  Y^2 \\\nonumber
&&-\ 2\Big(\sum_{\beta + \gamma = \alpha, |\beta| > |\gamma|}
  + \sum_{\beta + \gamma = \alpha, |\beta| < |\gamma|}\Big)C_\alpha^\beta\partial_1
  \partial_j\Gamma^\beta  Y^2\partial_2\partial_j\Gamma^\gamma  Y^1.
\end{eqnarray}
Note that if $|\alpha|$ is odd, then $A_{11} = 0$.

By symmetry of $\beta$ and $\gamma$, we can rewrite $A_{12}$ and
$A_{13}$ as
\begin{eqnarray}\nonumber
A_{12} &=& 2\sum_{\beta + \gamma = \alpha, |\beta| > |\gamma|}C_\alpha^\beta
   \big(\partial_1\partial_t\Gamma^\beta  Y^2\partial_2\partial_t
   \Gamma^\gamma Y^1 + \partial_1\partial_t\Gamma^\gamma  Y^2\partial_2\partial_t
   \Gamma^\beta Y^1\big)\\\nonumber
&&-\  2\sum_{\beta + \gamma = \alpha, |\beta| >
  |\gamma|}C_\alpha^\beta\big(\partial_1\partial_t\Gamma^\gamma
  Y^1\partial_2\partial_t\Gamma^\beta Y^2 + \partial_1\partial_t\Gamma^\beta
  Y^1\partial_2\partial_t\Gamma^\gamma Y^2\big),
\end{eqnarray}
and
\begin{eqnarray}\nonumber
A_{13} &=& 2\sum_{\beta + \gamma = \alpha, |\beta|
  > |\gamma|}C_\alpha^\beta\big(\partial_1\partial_j\Gamma^\gamma  Y^1\partial_2
  \partial_j\Gamma^\beta Y^2 + \partial_1\partial_j\Gamma^\beta
  Y^1\partial_2\partial_j\Gamma^\gamma Y^2\big)\\\nonumber
&&-\ 2\sum_{\beta + \gamma = \alpha, |\beta| > |\gamma|}C_\alpha^\beta
  \big(\partial_1\partial_j\Gamma^\beta  Y^2\partial_2
  \partial_j\Gamma^\gamma  Y^1 + \partial_1\partial_j\Gamma^\gamma
  Y^2\partial_2\partial_j\Gamma^\beta Y^1\big).
\end{eqnarray}
By merging the first and third terms, the second and the last terms
respectively, we further rewrite $A_{12}$ and $A_{13}$ as follows:
\begin{eqnarray}\nonumber
A_{12} = 2\nabla\cdot\sum_{\beta + \gamma = \alpha, |\beta| >
  |\gamma|}C_\alpha^\beta\big( - \partial_t
   \Gamma^\beta Y^2\nabla^\perp\partial_t\Gamma^\gamma Y^1 + \partial_t
   \Gamma^\beta Y^1\nabla^\perp\partial_t\Gamma^\gamma Y^2\big),
\end{eqnarray}
and
\begin{eqnarray}\nonumber
A_{13} = 2\nabla\cdot\sum_{\beta + \gamma = \alpha, |\beta|
  > |\gamma|}C_\alpha^\beta\big(\partial_j\Gamma^\beta Y^2\nabla^\perp\partial_j\Gamma^\gamma  Y^1
   - \partial_j\Gamma^\beta Y^1\nabla^\perp\partial_j\Gamma^\gamma Y^2\big).
\end{eqnarray}
Now it is clear that we may add up the above two identities and
figure out the contribution of $A_{12}$ and $A_{13}$ to \eqref{D-3},
which is
\begin{eqnarray}\label{D-5}
&&A_{12} + A_{13}\\\nonumber &&= 2\nabla\cdot\sum_{\beta + \gamma =
  \alpha,  |\beta| > |\gamma|}C_\alpha^\beta\big[\big(\partial_t
   \Gamma^\beta Y^1\nabla^\perp\partial_t\Gamma^\gamma Y^2
   - \partial_j\Gamma^\beta Y^1\nabla^\perp\partial_j
   \Gamma^\gamma Y^2\big)\\\nonumber
&&\quad + \big(\partial_j\Gamma^\beta Y^2\nabla^\perp
  \partial_j\Gamma^\gamma  Y^1 - \partial_t\Gamma^\beta Y^2
   \nabla^\perp\partial_t\Gamma^\gamma Y^1\big)\big].
\end{eqnarray}

Let us insert \eqref{D-3}, \eqref{D-4} and \eqref{D-5} into
\eqref{D-2} to derive that
\begin{eqnarray}\label{D-7}
&&\|\nabla(-\Delta)\nabla\cdot (\partial_t^2
  - \Delta)\Gamma^\alpha Y\|_{L^2}\\\nonumber
&&\lesssim \sum_{\beta + \gamma = \alpha}\big\|(\partial_t^2 -
  \Delta)\Gamma^\gamma Y^1\nabla^\perp\Gamma^\beta  Y^2 -
  (\partial_t^2 - \Delta)\Gamma^\beta  Y^2\nabla^\perp
  \Gamma^\gamma Y^1\big\|_{L^2}\\\nonumber
&&\quad +\ \sum_{\beta + \gamma = \alpha,
  |\beta| > |\gamma|}\Pi_1 + \sum_{\beta + \gamma = \alpha,
  |\beta| \geq |\gamma|}\Pi_2.
\end{eqnarray}
Here $\Pi_1$ and $\Pi_2$ are given in \eqref{D-11} and \eqref{D-12}.
We emphasis that in the expressions for $\Pi_1$ and $\Pi_2$ we still
kept the zero order Riesz operator. A crude estimate by removing them
directly is not enough to take the full advantage of the structure
of the system, which may only lead to an almost global existence
result and recovers what we already proved in \cite{LSZ13} by a
different method. Now let us insert \eqref{D-7} into \eqref{D-1} to
derive that
\begin{eqnarray}\label{D-8}
\|\nabla\Gamma^\alpha p\|_{L^2} \lesssim \|\nabla
Y\|_{L^\infty}\|(\partial_t^2 - \Delta)\Gamma^\alpha Y\|_{L^2} +
\Pi(|\alpha| + 2),
\end{eqnarray}
where $\Pi(|\alpha| + 2)$ is given in \eqref{D-9}. Using the
equation \eqref{Elasticity-D} and \eqref{D-8}, one has
\begin{eqnarray}\nonumber
&&\|(\partial_t^2 - \Delta)\Gamma^\alpha Y\|_{L^2}\\\nonumber
&&\lesssim \|(\nabla X)^{-T}\|_{L^\infty}\big(\|\nabla\Gamma^\alpha p\|_{L^2} + \sum_{\beta + \gamma
  = \alpha,\ \gamma \neq \alpha}\|\|(\nabla
  \Gamma^\beta Y)^{\top}(\partial_t^2 - \Delta)\Gamma^\gamma
  Y\|_{L^2}\big)\\\nonumber
&&\lesssim \|\nabla
  Y\|_{L^\infty}\|(\partial_t^2 - \Delta)\Gamma^\alpha
  Y\|_{L^2} + \Pi(|\alpha| + 2),
\end{eqnarray}
which gives that
\begin{eqnarray}\label{D-10}
\|(\partial_t^2 - \Delta)\Gamma^\alpha Y\|_{L^2} \lesssim
\Pi(|\alpha| + 2)
\end{eqnarray}
provided that $\|\nabla Y\|_{L^\infty}$ is appropriately smaller
than an absolute positive constant $\delta < 1$. Inserting
\eqref{D-10} into \eqref{D-8}, one also has
\begin{eqnarray}\nonumber
\|\nabla\Gamma^\alpha p\|_{L^2} \lesssim \Pi(|\alpha| + 2).
\end{eqnarray}
We have proved the lemma.
\end{proof}

In the next lemma, we will use Lemma \ref{SN-1} to estimate the main
source of nonlinearities in \eqref{Elasticity-D} by carefully
dealing with the last two terms in \eqref{D-9}. On the right hand
sides of those estimates that we are going to prove, we did not gain
derivatives since the both sides are of the same order. But what we
gain is the time decay rate. In section \ref{HEE} when we performing
higher order energy estimate, we will deal with the last two terms
in \eqref{D-9} once again, in a different way. The purpose there is
to gain one derivative, with the price of slowing down the decay
rate in time.
\begin{lem}\label{SN-2}
Suppose that $\kappa \geq 10$. There exists $\delta > 0$ such that
if $\mathcal{E}_{\kappa - 2} \leq \delta$, then there hold
\begin{equation}\nonumber
\langle t\rangle\|\nabla\Gamma^{\leq\kappa - 4} p\|_{L^2} + \langle
t\rangle\|(\partial_t^2 - \Delta)\Gamma^{\leq\kappa - 4} Y\|_{L^2}
\lesssim \mathcal{E}_{ \kappa - 2}^{\frac{1}{2}}\big(\mathcal{E}_{
\kappa - 2}^{\frac{1}{2}} + \mathcal{X}_{\kappa -
2}^{\frac{1}{2}}\big)
\end{equation}
and
\begin{equation}\nonumber
\langle t\rangle\|\nabla\Gamma^{\leq\kappa - 2} p\|_{L^2} + \langle
t\rangle\|(\partial_t^2 - \Delta)\Gamma^{\leq\kappa - 2} Y\|_{L^2}
\lesssim \mathcal{E}_{\kappa}^{\frac{1}{2}}\big(\mathcal{E}_{ \kappa
- 2}^{\frac{1}{2}} + \mathcal{X}_{ \kappa - 2}^{\frac{1}{2}}\big).
\end{equation}
\end{lem}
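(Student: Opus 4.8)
The plan is to start from Lemma~\ref{SN-1}, which already reduces both $\|\nabla\Gamma^\alpha p\|_{L^2}$ and $\|(\partial_t^2-\Delta)\Gamma^\alpha Y\|_{L^2}$ to the quantity $\Pi(|\alpha|+2)$, and then to estimate the three groups of terms in $\Pi$ separately, putting an explicit factor of $\langle t\rangle^{-1}$ in front by trading it against the decay of a good unknown. I would take $\alpha$ with $|\alpha|=\kappa-4$ in the first inequality and $|\alpha|=\kappa-2$ in the second; the two cases are parallel, so I describe the first. For the first sum $\sum_{\beta+\gamma=\alpha,\gamma\neq\alpha}\big\||\nabla\Gamma^\beta Y||(\partial_t^2-\Delta)\Gamma^\gamma Y|\big\|_{L^2}$ the strategy is the usual dichotomy on which factor carries the high order. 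When $\Gamma^\gamma$ is the low-order factor, I put $(\partial_t^2-\Delta)\Gamma^\gamma Y$ in $L^\infty$ using Lemma~\ref{DecayES} (which already supplies the gain $\langle t\rangle^{-1}$, at worst with an $\langle r\rangle^{-1/2}\langle t-r\rangle^{-1}$ in the light-cone zone, where $\langle t-r\rangle\gtrsim1$ so the bound is at least $\langle t\rangle^{-1}\mathcal{X}^{1/2}$), and $\nabla\Gamma^\beta Y$ in $L^2$ against $\mathcal{E}_{\kappa-2}^{1/2}$; since $\kappa\ge10$, $|\gamma|+4\le\kappa-2$ so Lemma~\ref{DecayES} applies. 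When instead $\Gamma^\beta$ is the low-order factor, I put $\nabla\Gamma^\beta Y$ in $L^\infty$ via Lemma~\ref{DecayEF}, which gives $\langle r\rangle^{-1/2}$ only; to recover the full $\langle t\rangle^{-1}$ one uses that on $\{r\le 2\langle t\rangle/3\}\cup\{r\ge 5\langle t\rangle/4\}$ estimate~\eqref{C-11} yields $\langle t\rangle^{-1}\ln^{1/2}$, while on the remaining light-cone band $\langle r\rangle\simeq\langle t\rangle$, so in all regions $|\nabla\Gamma^\beta Y|\lesssim \langle t\rangle^{-1}\ln^{1/2}(e+t)\,(\mathcal{E}_{\kappa-2}^{1/2}+\mathcal{X}_{\kappa-2}^{1/2})$; pairing with $\|(\partial_t^2-\Delta)\Gamma^\gamma Y\|_{L^2}$, which by \eqref{D-10} of Lemma~\ref{SN-1} (or a crude bound) is $\lesssim \mathcal{E}_{\kappa-2}^{1/2}$ for $|\gamma|\le\kappa-4$, closes this piece — the stray $\ln^{1/2}$ being harmless since we can absorb it into the slightly weaker right-hand side or, more carefully, avoid it by using \eqref{C-1} together with $\langle r\rangle\gtrsim\langle t\rangle$ away from the origin and \eqref{C-12} near it. For $t\le 4$ the factor $\langle t\rangle$ is bounded and the estimate is the trivial energy bound.

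For the two remaining sums, the point is precisely the one flagged before the statement: in $\Pi_1$ and $\Pi_2$ we must keep the zero-order Riesz operator $(-\Delta)^{-1/2}\nabla\cdot$ and exploit that its argument is a sum of \emph{null forms} $\partial_t\Gamma^\beta Y^a\,\nabla^\perp\partial_t\Gamma^\gamma Y^b-\partial_j\Gamma^\beta Y^a\,\nabla^\perp\partial_j\Gamma^\gamma Y^b$. The classical algebraic identity for such a quadratic null form is that it equals a linear combination of products in which at least one factor is a \emph{good derivative} $\omega_j\partial_t+\partial_j$ (or $\partial_r-\partial_t$) of one of the factors, modulo terms of the shape $\tfrac1r(\text{angular derivative})\cdot(\cdot)$; schematically $Q(\nabla u,\nabla v)\sim (\bar\partial u)(\partial v)+(\partial u)(\bar\partial v)+r^{-1}(\Omega u)(\partial v)+\cdots$. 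Using the $L^2$-boundedness of $(-\Delta)^{-1/2}\nabla$ on these rewritten expressions (now the Riesz kernel is fine since we have a genuine product, not a single Riesz image of $r$), I estimate: put the good-derivative factor in $L^2$ and use Lemma~\ref{GoodDeri}, which bounds $\langle t\rangle|\bar\partial D Y|$ by $|\nabla\Gamma Y|+|\nabla Y|+t|(\partial_t^2-\Delta)Y|$ in the light-cone band, while away from the band one has the extra $\langle r\rangle^{-1/2}$ or $\langle t\rangle^{-1}$ decay of $D\Gamma^{(\cdot)}Y$ from Lemma~\ref{DecayEF} directly; the other factor goes in $L^\infty$ via Lemma~\ref{DecayEF} or in $L^2$ via $\mathcal{E}$. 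The $r^{-1}$-angular terms are treated the same way, with $r^{-1}\lesssim\langle t\rangle^{-1}$ in the region where the weight of $\mathcal{X}$ is $\langle t-r\rangle$ replaced appropriately — more precisely, one splits into $r\le\langle t\rangle/3$, the band, and $r\ge 5\langle t\rangle/2$ and uses $\langle t\rangle^{-1}$ decay of $D\Gamma^{(\cdot)}Y$ from \eqref{C-11}/\eqref{C-12} off the band and $r^{-1}\simeq\langle t\rangle^{-1}$ on it. Counting derivatives: the highest-order factor carries $|\beta|+2\le\kappa-2$ derivatives and sits in $L^2$ inside $\mathcal{E}_{\kappa-2}^{1/2}$ or $\mathcal{X}_{\kappa-2}^{1/2}$, the low-order factor carries $\le|\gamma|+3\le\kappa-4+3=\kappa-1$ which is within the range of Lemmas~\ref{DecayEF}–\ref{DecayES} once we note those need only $\mathcal{E}_{|\cdot|+3}$, $\mathcal{X}_{|\cdot|+4}$, and $|\gamma|+4\le\kappa-2$. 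Summing over the finitely many splittings $\beta+\gamma=\alpha$ gives $\langle t\rangle\,(\Pi_1+\Pi_2)\lesssim \mathcal{E}_{\kappa-2}^{1/2}\big(\mathcal{E}_{\kappa-2}^{1/2}+\mathcal{X}_{\kappa-2}^{1/2}\big)$; the second inequality is identical with one more derivative everywhere, so the top factor lands in $\mathcal{E}_{\kappa}^{1/2}$ while the low factor still only costs $\kappa-2$ derivatives.

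The main obstacle I expect is bookkeeping the light-cone region in the null-form estimates: away from $\{\langle t\rangle/3\le r\le 5\langle t\rangle/2\}$ essentially every factor already decays like $\langle t\rangle^{-1}$ by the refined Sobolev estimate \eqref{C-11}–\eqref{C-12}, so there is slack; but \emph{on} the band one genuinely needs the null structure to produce a good derivative, and one must check that after commuting $(-\Delta)^{-1/2}\nabla\cdot$ through nothing is lost — i.e.\ that the rewritten integrand really is a bona fide product to which the Riesz operator applies boundedly, rather than something requiring $r$ itself to be an $A_2$ weight (the very pitfall noted in the discussion of Lemma~\ref{WE-1}). The resolution is that the $r^{-1}$ appearing is attached to an angular (rotation) derivative $\Omega$, and $\langle t\rangle^{-1}\Omega\Gamma^{(\cdot)}Y$ is controlled by $\mathcal{E}$ and $\mathcal{X}$ through exactly the same mechanism as in Lemma~\ref{GoodDeri}; so one never inverts the Laplacian against $r$ alone. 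Once the band is handled this way, summation and the smallness $\mathcal{E}_{\kappa-2}\le\delta$ (used only to absorb the $\|\nabla Y\|_{L^\infty}\|(\partial_t^2-\Delta)\Gamma^\alpha Y\|_{L^2}$ self-term on the way from Lemma~\ref{SN-1}) finish the proof.
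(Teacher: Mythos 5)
There is a genuine gap: you never set up the bootstrap that the paper's proof hinges on, and the substitutes you offer for it do not deliver the required $\langle t\rangle^{-1}$ decay on the light-cone band.

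Two of your intermediate claims are false in exactly the critical region $\langle t\rangle/3\le r\le 5\langle t\rangle/2$. First, when you put $(\partial_t^2-\Delta)\Gamma^\gamma Y$ in $L^\infty$ via Lemma~\ref{DecayES}, estimate~\eqref{C-4} gives $|D^2\Gamma^\gamma Y|\lesssim\langle r\rangle^{-1/2}\langle t-r\rangle^{-1}(\mathcal{X}^{1/2}+\mathcal{E}^{1/2})$. On the band $\langle r\rangle\simeq\langle t\rangle$, and near the cone $\langle t-r\rangle\simeq 1$, so this is only $\langle t\rangle^{-1/2}$, not the $\langle t\rangle^{-1}$ you assert. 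Second, when you instead put $\nabla\Gamma^\beta Y$ in $L^\infty$, \eqref{C-1} and \eqref{C-3} likewise give only $\langle t\rangle^{-1/2}$ (times at best an extra $\langle t-r\rangle^{-1/2}$) on the band, and the ``crude bound'' $\|(\partial_t^2-\Delta)\Gamma^\gamma Y\|_{L^2}\lesssim\mathcal{E}_{\kappa-2}^{1/2}$ supplies no decay at all. Multiplied together, both of your dichotomy branches produce at most $\langle t\rangle^{-1/2}\mathcal{E}_{\kappa-2}$ on the band, which is strictly weaker than the claimed $\langle t\rangle^{-1}\mathcal{E}_{\kappa-2}^{1/2}(\mathcal{E}_{\kappa-2}^{1/2}+\mathcal{X}_{\kappa-2}^{1/2})$. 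The same problem recurs in your treatment of $\Pi_1,\Pi_2$: after the null-form rewriting and Lemma~\ref{GoodDeri}, you correctly note that $\langle t\rangle|\bar\partial DY|$ is controlled by $|\nabla Y|+|\nabla\Gamma Y|+t|(\partial_t^2-\Delta)Y|$, but you never say what to do with the last term $t|(\partial_t^2-\Delta)\Gamma^\cdot Y|$, which, for the top-order $|\gamma|$ in the split, is of the same order as the quantity being estimated.

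The missing idea is precisely the one the paper uses: one does \emph{not} try to bound $\|(\partial_t^2-\Delta)\Gamma^\gamma Y\|_{L^2}$ by an explicit decaying quantity inside the argument. Instead, in every place it arises one writes $\|(\partial_t^2-\Delta)\Gamma^\gamma Y\|_{L^2}=\langle t\rangle^{-1}\|\langle t\rangle(\partial_t^2-\Delta)\Gamma^\gamma Y\|_{L^2}$ and keeps this ``self-term'' on the right-hand side, obtaining (as in~\eqref{F-4}) an implicit inequality of the form
$\Pi(|\alpha|+2)\lesssim \langle t\rangle^{-1}\mathcal{E}_{|\alpha|+2}^{1/2}(\mathcal{E}^{1/2}+\mathcal{X}^{1/2})+\langle t\rangle^{-1}\mathcal{E}_{[|\alpha|/2]+3}^{1/2}\,\|\langle t\rangle(\partial_t^2-\Delta)\Gamma^{|\alpha|}Y\|_{L^2}+\dots$,
where the coefficient $\mathcal{E}_{[|\alpha|/2]+3}^{1/2}\le\mathcal{E}_{\kappa-3}^{1/2}\lesssim\delta^{1/2}$ is small because $\kappa\ge 10$ forces $[|\alpha|/2]+3\le\kappa-3$. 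Feeding this back through Lemma~\ref{SN-1} gives a self-improving inequality for $\langle t\rangle\|(\partial_t^2-\Delta)\Gamma^{\kappa-4}Y\|_{L^2}$ in which the self-term is multiplied by a small constant and can be absorbed into the left-hand side. That absorption step is not cosmetic: it is what upgrades the na\"ive $\langle t\rangle^{-1/2}$ light-cone decay to the subcritical $\langle t\rangle^{-1}$. Your proposal already contains the other two key mechanisms (the explicit null-form algebra producing good derivatives, and the away-from-cone refined Sobolev estimates), so once you replace the crude $L^2$ and $L^\infty$ bounds on the $(\partial_t^2-\Delta)$ factor by the bootstrapped quantity $\|\langle t\rangle(\partial_t^2-\Delta)\Gamma^\cdot Y\|_{L^2}$ and close by absorption, the argument becomes the paper's.
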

\begin{proof}
We need deal with the two quantities in \eqref{D-11} and
\eqref{D-12}. They can be estimated in a similar way. Below we only
present the estimate for $\Pi_1$ in \eqref{D-11}. We first deal with
the integrals away from the light cone. Let $\phi^t$ be defined in
Lemma \ref{DecayEF}. It is easy to have the following first step
estimate
\begin{eqnarray}\nonumber
&&\sum_{\beta + \gamma = \alpha,
  |\gamma| \leq [|\alpha|/2]}\Big(\big\|\big(|\partial_t
   \Gamma^\beta Y^1\nabla^\perp\partial_t\Gamma^\gamma Y^2
   - \partial_j\Gamma^\beta Y^1\nabla^\perp\partial_j
   \Gamma^\gamma Y^2|\\\nonumber
&&\quad\quad\quad\quad +\ |\partial_t\Gamma^\beta Y^2
  \nabla^\perp\partial_t\Gamma^\gamma Y^1 - \partial_j\Gamma^\beta Y^2\nabla^\perp
  \partial_j\Gamma^\gamma  Y^1|\big)(1 - \phi^t)\big\|_{L^2}\\\nonumber
&&\quad\quad\ \ \lesssim \sum_{\beta + \gamma = \alpha,
  |\gamma| \leq [|\alpha|/2]}\|D\Gamma^\beta Y\|_{L^2}
  \|1_{{\rm supp}(1 - \varphi^t)}D^2\Gamma^\gamma Y\|_{L^\infty}.
\end{eqnarray}
Using Lemma \ref{DecayES}, the above is bounded by
\begin{eqnarray}\nonumber
\langle t\rangle^{-1}\mathcal{E}_{ |\alpha| +
1}^{\frac{1}{2}}\mathcal{X}_{[|\alpha|/2] + 4}^{\frac{1}{2}}.
\end{eqnarray}
Hence the estimate \eqref{D-9} is improved to be:
\begin{eqnarray}\label{F-1}
&&\Pi(|\alpha| + 2) \lesssim \langle
  t\rangle^{-1}\mathcal{E}_{ |\alpha| +
  1}^{\frac{1}{2}}\mathcal{X}_{[|\alpha|/2]
  + 4}^{\frac{1}{2}}\\\nonumber
&&\quad +\ \sum_{\beta + \gamma = \alpha,\
  \gamma \neq \alpha}\big\||\nabla\Gamma^\beta Y||(\partial_t^2
   - \Delta)\Gamma^\gamma Y|\big\|_{L^2}\\\nonumber
&&\quad +\ \sum_{\beta + \gamma = \alpha,
  |\gamma| \leq [|\alpha|/2]}\big(\Pi_1(\varphi^t) + \Pi_2(\varphi^t)\big),
\end{eqnarray}
where
\begin{eqnarray}\nonumber
\Pi_1(\varphi^t)  = \big\|(- \Delta)^{-
  \frac{1}{2}}\nabla\cdot\big[\varphi^t\big(\partial_t
   \Gamma^\beta Y^1\nabla^\perp\partial_t
   \Gamma^\gamma Y^2 -
  \partial_j\Gamma^\beta Y^1\nabla^\perp\partial_j
  \Gamma^\gamma Y^2\big)\big]\big\|_{L^2}
\end{eqnarray}
and
\begin{eqnarray}\nonumber
\Pi_2(\varphi^t) = \big\|(- \Delta)^{-
\frac{1}{2}}\nabla\cdot\big[\varphi^t\big(\partial_t\Gamma^\beta Y^2
\nabla^\perp\partial_t\Gamma^\gamma Y^1 - \partial_j\Gamma^\beta
Y^2\nabla^\perp
\partial_j\Gamma^\gamma  Y^1\big)\big]\big\|_{L^2}.
\end{eqnarray}
We will still need the use of this estimate in section \ref{HEE}.

Now let us deal with the third line of \eqref{F-1}. First of all, we
have
\begin{eqnarray}\nonumber
&&\nabla\cdot\big(\varphi^t\partial_t\Gamma^\beta Y^1
  \nabla^\perp\partial_t\Gamma^\gamma Y^2 - \varphi^t\partial_j\Gamma^\beta
  Y^1\nabla^\perp\partial_j\Gamma^\gamma Y^2\big)\\\nonumber
&&= \nabla\cdot\big(\varphi^t\partial_t\Gamma^\beta Y^1
  \nabla^\perp[\omega_j(\omega_j\partial_t + \partial_j)\Gamma^\gamma Y^2]\\\nonumber
&&\quad -\ \varphi^t\partial_j\Gamma^\beta
  Y^1\nabla^\perp(\omega_j\partial_t + \partial_j)\Gamma^\gamma Y^2\big)\\\nonumber
&&\quad +\ \nabla\cdot\big(\varphi^t\partial_j\Gamma^\beta
  Y^1\nabla^\perp(\omega_j\partial_t\Gamma^\gamma Y^2) - \varphi^t\partial_t\Gamma^\beta Y^1
  \nabla^\perp[\omega_j\partial_j\Gamma^\gamma Y^2]\big).
\end{eqnarray}
The last line on the right hand side of the above equality can be
rewritten as
\begin{eqnarray}\nonumber
\nabla^\perp\cdot\big(\omega_j\partial_t\Gamma^\gamma
Y^2\nabla(\varphi^t\partial_j\Gamma^\beta Y^1) -
\omega_j\partial_j\Gamma^\gamma
Y^2\nabla(\varphi^t\partial_t\Gamma^\beta Y^1)\big),
\end{eqnarray}
which can be further re-organized as follows:
\begin{eqnarray}\nonumber
&&\nabla^\perp\cdot\big((\omega_j\partial_t +
  \partial_j)\Gamma^\gamma Y^2\nabla(\varphi^t\partial_j
  \Gamma^\beta Y^1) - \partial_j\Gamma^\gamma Y^2\nabla\varphi^t
  (\omega_j\partial_t + \partial_j)\Gamma^\beta Y^1\\\nonumber
&&\quad - \partial_j\Gamma^\gamma Y^2\varphi^t(\omega_j\partial_t +
  \partial_j)\nabla\Gamma^\beta Y^1\big)\\\nonumber
&&= \nabla\cdot\big(\varphi^t\partial_j
  \Gamma^\beta Y^1\nabla^\perp(\omega_j\partial_t +
  \partial_j)\Gamma^\gamma Y^2\big)\\\nonumber
&&\quad - \nabla^\perp\cdot\big(\partial_j\Gamma^\gamma
  Y^2\varphi^t(\omega_j\partial_t +
  \partial_j)\nabla\Gamma^\beta Y^1\big)\\\nonumber
&&\quad - \nabla^\perp\cdot\big(\partial_j\Gamma^\gamma
  Y^2\nabla\varphi^t(\omega_j\partial_t + \partial_j)\Gamma^\beta Y^1\big).
\end{eqnarray}
Consequently, we have
\begin{eqnarray}\label{F-5}
&&\nabla\cdot\big(\varphi^t\partial_t\Gamma^\beta Y^1
  \nabla^\perp\partial_t\Gamma^\gamma Y^2 - \varphi^t\partial_j\Gamma^\beta
  Y^1\nabla^\perp\partial_j\Gamma^\gamma Y^2\big)\\\nonumber
&&= \nabla\cdot\big(\varphi^t\partial_t\Gamma^\beta Y^1
  \nabla^\perp[\omega_j(\omega_j\partial_t + \partial_j)\Gamma^\gamma Y^2]\\\nonumber
&&\quad -\ \varphi^t\partial_j\Gamma^\beta
  Y^1\nabla^\perp(\omega_j\partial_t + \partial_j)\Gamma^\gamma Y^2
  + \varphi^t\partial_j
  \Gamma^\beta Y^1\nabla^\perp(\omega_j\partial_t +
  \partial_j)\Gamma^\gamma Y^2\big)\\\nonumber
&&\quad -\ \nabla^\perp\cdot\big(\partial_j\Gamma^\gamma
  Y^2\varphi^t(\omega_j\partial_t + \partial_j)\nabla
  \Gamma^\beta Y^1 + \partial_j\Gamma^\gamma Y^2\nabla\varphi^t
  (\omega_j\partial_t + \partial_j)\Gamma^\beta Y^1\big).
\end{eqnarray}
The expression in \eqref{F-5} will be rewritten as a different form
in section \ref{HEE} for a different purpose.

Now we are ready to estimate the third line on the right hand side
of \eqref{F-1} as follows:
\begin{eqnarray}\nonumber
&&\big\|(- \Delta)^{-
  \frac{1}{2}}\nabla\cdot\big[\varphi^t\big(\partial_t
   \Gamma^\beta Y^1\nabla^\perp\partial_t\Gamma^\gamma
   Y^2 - \partial_j\Gamma^\beta
  Y^1\nabla^\perp\partial_j\Gamma^\gamma Y^2\big)
  \big]\big\|_{L^2}\\\nonumber
&&\lesssim \|D\Gamma^\beta Y\|_{L^2}\|1_{{\rm supp}\varphi^t}
  (\omega_j\partial_t + \partial_j)\nabla\Gamma^\gamma
  Y\|_{L^\infty}\\\nonumber
&&\quad +\ \|1_{{\rm supp}\varphi^t}D\Gamma^\gamma
Y\|_{L^\infty}\|1_{{\rm supp}\varphi^t}
  (\omega_j\partial_t + \partial_j)\nabla\Gamma^\beta
  Y\|_{L^2}\\\nonumber
&&\quad +\ \langle t\rangle^{-1}\|1_{{\rm
  supp}\varphi^t}D\Gamma^\gamma
  Y\|_{L^\infty}\|D\Gamma^\beta Y\|_{L^2}.
\end{eqnarray}
Note that the last term in the above is due to the commutation between $\nabla$ and good derivatives.  We first use Lemma \ref{GoodDeri} to bound the quantities on the
right hand side of the above estimate by
\begin{eqnarray}\nonumber
&&\langle t\rangle^{-1}\|D\Gamma^\beta Y\|_{L^2}\big(
  \|1_{{\rm supp}\varphi^t}\langle t\rangle (\partial_t^2 - \Delta)
  \Gamma^\gamma Y\|_{L^\infty} + \|1_{{\rm supp}\varphi^t}D\Gamma^{
  \leq|\gamma| + 1} Y\|_{L^\infty}\big)\\\nonumber
&&\quad +\ \langle t\rangle^{-1}\|1_{{\rm
  supp}\varphi^t}D\Gamma^\gamma Y\|_{L^\infty}
  \big(\|\langle t\rangle(\partial_t^2
  - \Delta)\Gamma^\beta Y\|_{L^2} + \|D\Gamma^{\leq|\beta| + 1} Y\|_{L^2}\big)\\\nonumber
&&\quad +\ \langle t\rangle^{-1}\|1_{{\rm
  supp}\varphi^t}D\Gamma^\gamma Y\|_{L^\infty}\|
  D\Gamma^\beta Y\|_{L^2}.
\end{eqnarray}
Consequently, noting $|\gamma| \leq [|\alpha|/2]$ and using
\eqref{C-1} in Lemma \ref{DecayEF} (slightly changing its proof by
using the cutoff function $\varphi^t$ to keep the wave operator
$\partial_t^2 - \Delta$), one can estimate the third line on the
right hand side of \eqref{F-1} as follows:
\begin{eqnarray}\label{F-2}
&&\sum_{\beta + \gamma = \alpha,
  |\gamma| \leq [|\alpha|/2]}\Pi_1(\varphi^t) \lesssim \langle
  t\rangle^{-\frac{3}{2}}\mathcal{E}_{
  [|\alpha|/2] + 4}^{\frac{1}{2}}\mathcal{E}_{
  |\alpha| + 2}^{\frac{1}{2}}\\\nonumber
&&\quad\quad\quad\quad\quad\quad +\ \langle t
  \rangle^{-\frac{3}{2}}\mathcal{E}_{
  |\alpha| + 1}^{\frac{1}{2}}\|\langle t\rangle (\partial_t^2 - \Delta)
  \Gamma^{\leq[|\alpha|/2] + 2} Y\|_{L^2}\\\nonumber
&&\quad\quad\quad\quad\quad\quad +\ \langle
t\rangle^{-\frac{3}{2}}\mathcal{E}_{
  [|\alpha|/2] + 3}^{\frac{1}{2}}
  \|\langle t\rangle(\partial_t^2
  - \Delta)\Gamma^{\leq|\alpha|} Y\|_{L^2}.
\end{eqnarray}
As we have already mentioned, $\Pi_2(\varphi^t)$ in the last line on the right hand side of (4.12) can be
bounded similarly as $\Pi_2(\varphi^t)$.

It remains to estimate the second line in \eqref{F-1}. Using Sobolev
imbedding $H^2 \hookrightarrow L^\infty$, we have
\begin{eqnarray}\label{F-9}
&&\sum_{\beta + \gamma = \alpha,\
  \gamma \neq \alpha}\big\||\nabla\Gamma^\beta Y||(\partial_t^2
   - \Delta)\Gamma^\gamma Y|\big\|_{L^2}\\\nonumber
&&\lesssim \sum_{\beta + \gamma = \alpha,\
  |\gamma| \leq [|\alpha|/2]}\|\nabla\Gamma^\beta Y\|_{L^2}\|(\partial_t^2
   - \Delta)\Gamma^\gamma Y\|_{L^\infty}\\\nonumber
&&\quad +\ \sum_{\beta + \gamma = \alpha,\
  [|\alpha|/2] < |\gamma| < |\alpha|}\|\nabla\Gamma^\beta Y\|_{L^\infty}\|(\partial_t^2
   - \Delta)\Gamma^\gamma Y\|_{L^2}\\\nonumber
&&\lesssim \langle t\rangle^{-1}\mathcal{E}_{
  |\alpha| + 1}^{\frac{1}{2}}\|\langle t\rangle(\partial_t^2
   - \Delta)\Gamma^{\leq[|\alpha|/2] + 2} Y\|_{L^2}\\\nonumber
&&\quad +\ \langle t\rangle^{-1}\mathcal{E}_{
  [|\alpha|/2] + 3}^{\frac{1}{2}}\|\langle t\rangle(\partial_t^2
   - \Delta)\Gamma^{\leq|\alpha| - 1} Y\|_{L^2}.
\end{eqnarray}
We now insert \eqref{F-9} and \eqref{F-2} into \eqref{F-1} to obtain
that
\begin{eqnarray}\label{F-4}
&&\Pi(|\alpha| + 2) \lesssim \langle
  t\rangle^{-1}\mathcal{E}_{ |\alpha| +
  2}^{\frac{1}{2}}\big(\mathcal{E}_{[|\alpha|/2]
  + 4}^{\frac{1}{2}} + \mathcal{X}_{[|\alpha|/2]
  + 4}^{\frac{1}{2}}\big)\\\nonumber
&&\quad\quad\quad\quad\ \ +\ \langle t\rangle^{-1}\mathcal{E}_{
  |\alpha| + 1}^{\frac{1}{2}}\|\langle t\rangle(\partial_t^2
   - \Delta)\Gamma^{\leq[|\alpha|/2] + 2} Y\|_{L^2}\\\nonumber
&&\quad\quad\quad\quad\ \ +\ \langle t\rangle^{-1}\mathcal{E}_{
  [|\alpha|/2] + 3}^{\frac{1}{2}}\|\langle t\rangle(\partial_t^2
   - \Delta)\Gamma^{\leq|\alpha|} Y\|_{L^2}
\end{eqnarray}

Now for $\kappa \geq 10$ and $|\alpha| \leq \kappa - 4$, one has
$|\alpha| + 2 \leq \kappa - 2$ and $[|\alpha|/2] + 4 \leq \kappa -
3$. Hence, by \eqref{F-4}, we have
\begin{eqnarray}\nonumber
&&\Pi(\kappa - 2) \lesssim \langle t\rangle^{-
  1}\mathcal{E}_{\kappa - 2}^{\frac{1}{2}}\big(\mathcal{E}_{ \kappa
  - 3}^{\frac{1}{2}} + \mathcal{X}_{
  \kappa - 3}^{\frac{1}{2}}\big)\\\nonumber
&&\quad\quad\quad \quad\ +\ \langle t\rangle^{-1}\mathcal{E}_{
  \kappa - 3}^{\frac{1}{2}}\|\langle t\rangle(\partial_t^2
   - \Delta)\Gamma^{\leq\kappa - 4} Y\|_{L^2}.
\end{eqnarray}
Inserting the above inequality into the estimate in Lemma \ref{SN-1}
and noting that $\mathcal{E}_{ \kappa - 2}(Y) \leq \delta$, one has
\begin{equation}\label{D-6}
\langle t\rangle\|\nabla\Gamma^{\leq\kappa - 4} p\|_{L^2} + \langle
t\rangle\|(\partial_t^2 - \Delta)\Gamma^{\leq\kappa - 4} Y\|_{L^2}
\lesssim \mathcal{E}_{ \kappa - 2}^{\frac{1}{2}}\big(\mathcal{E}_{
\kappa - 2}^{\frac{1}{2}} + \mathcal{X}_{\kappa -
2}^{\frac{1}{2}}\big).
\end{equation}
This proves the first estimate in Lemma \ref{SN-2}.

Next, for $|\alpha| \leq \kappa - 2$, there holds $[|\alpha|/2] + 4
\leq \kappa - 2$. Hence, one can derive from \eqref{F-4} that
\begin{eqnarray}\nonumber
&&\Pi(\kappa) \lesssim \langle t\rangle^{- 1}\mathcal{E}_{
  \kappa}^{\frac{1}{2}}\big(\mathcal{E}_{
  \kappa - 2}^{\frac{1}{2}} + \mathcal{X}_{
  \kappa - 2}^{\frac{1}{2}}\big)\\\nonumber
&&\quad\quad\  +\ \langle t\rangle^{-1}\mathcal{E}_{
  \kappa}^{\frac{1}{2}}\|\langle t\rangle(\partial_t^2
   - \Delta)\Gamma^{\leq\kappa - 4} Y\|_{L^2}\\\nonumber
&&\quad\quad\  +\ \langle t\rangle^{-1}\mathcal{E}_{
  \kappa - 2}^{\frac{1}{2}}\|\langle t\rangle(\partial_t^2
   - \Delta)\Gamma^{\leq\kappa - 2} Y\|_{L^2}.
\end{eqnarray}
which, combining \eqref{D-6} and Lemma \ref{SN-1} gives the second
estimate in the lemma.

\end{proof}

We are ready to state the following lemma:
\begin{lem}\label{WE-2}
Suppose that $\kappa \geq 10$. There exists $\delta > 0$ such that
if $\mathcal{E}_{\kappa - 2} \leq \delta$, then there hold
\begin{equation}\nonumber
\mathcal{X}_{\kappa - 2} \lesssim \mathcal{E}_{\kappa - 2},\quad
\mathcal{X}_{\kappa} \lesssim \mathcal{E}_{\kappa}.
\end{equation}
\end{lem}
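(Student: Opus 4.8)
The plan is to lift Lemma \ref{WE-1} up the derivative hierarchy and then absorb the resulting wave-operator term via Lemma \ref{SN-2}. The crucial point is that the proof of Lemma \ref{WE-1} never uses the equation for $Y$: it relies only on the polar decompositions of $\nabla$ and $\Delta$, integration by parts, and the commutator algebra of the fields $\partial_t,\partial_1,\partial_2,\widetilde\Omega,\widetilde S$ (for which $[\partial_t^2-\Delta,\Gamma]$ is either $0$ or $2(\partial_t^2-\Delta)$, so commutators never raise the order). Consequently it applies verbatim with $Y$ replaced by $\Gamma^\alpha Y$, giving for each multi-index $\alpha$
\begin{equation}\nonumber
\Big(\int_{r\leq 2\langle t\rangle}\langle t-r\rangle^2|D^2\Gamma^\alpha Y|^2dy+\int_{r>2\langle t\rangle}\langle t\rangle^2|D^2\Gamma^\alpha Y|^2dy\Big)^{\frac12}\lesssim\|D\Gamma^{|\alpha|+1}Y\|_{L^2}+\langle t\rangle\|(\partial_t^2-\Delta)\Gamma^\alpha Y\|_{L^2}.
\end{equation}
Squaring and summing over $|\alpha|\leq\kappa-4$ (respectively over $|\alpha|\leq\kappa-2$), and recalling the definitions of $\mathcal{E}_{\kappa-2},\mathcal{E}_\kappa$, one arrives at
\begin{equation}\nonumber
\mathcal{X}_{\kappa-2}^{\frac12}\lesssim\mathcal{E}_{\kappa-2}^{\frac12}+\langle t\rangle\|(\partial_t^2-\Delta)\Gamma^{\kappa-4}Y\|_{L^2},\qquad \mathcal{X}_{\kappa}^{\frac12}\lesssim\mathcal{E}_{\kappa}^{\frac12}+\langle t\rangle\|(\partial_t^2-\Delta)\Gamma^{\kappa-2}Y\|_{L^2}.
\end{equation}

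For the first assertion of the lemma I would substitute the first estimate of Lemma \ref{SN-2}, namely $\langle t\rangle\|(\partial_t^2-\Delta)\Gamma^{\kappa-4}Y\|_{L^2}\lesssim\mathcal{E}_{\kappa-2}^{\frac12}\big(\mathcal{E}_{\kappa-2}^{\frac12}+\mathcal{X}_{\kappa-2}^{\frac12}\big)$, into the bound just obtained; this produces the self-improving inequality $\mathcal{X}_{\kappa-2}^{\frac12}\lesssim\mathcal{E}_{\kappa-2}^{\frac12}+\mathcal{E}_{\kappa-2}^{\frac12}\mathcal{X}_{\kappa-2}^{\frac12}$. Since $\mathcal{E}_{\kappa-2}\leq\delta$ is small, the factor in front of $\mathcal{X}_{\kappa-2}^{\frac12}$ on the right is $\ll 1$, so this term is absorbed into the left-hand side, leaving $\mathcal{X}_{\kappa-2}\lesssim\mathcal{E}_{\kappa-2}$.

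With the first assertion available, the second follows by feeding it together with $\mathcal{E}_{\kappa-2}\leq\delta$ into the second estimate of Lemma \ref{SN-2}: $\langle t\rangle\|(\partial_t^2-\Delta)\Gamma^{\kappa-2}Y\|_{L^2}\lesssim\mathcal{E}_{\kappa}^{\frac12}\big(\mathcal{E}_{\kappa-2}^{\frac12}+\mathcal{X}_{\kappa-2}^{\frac12}\big)\lesssim\delta^{\frac12}\mathcal{E}_{\kappa}^{\frac12}$, and inserting this into the bound for $\mathcal{X}_{\kappa}^{\frac12}$ and choosing $\delta$ small yields $\mathcal{X}_{\kappa}\lesssim\mathcal{E}_{\kappa}$. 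The one step that needs care is the absorption in the first part: it presupposes that $\mathcal{X}_{\kappa-2}$ is a priori finite for the solution under consideration — which is guaranteed by the weighted Sobolev regularity of the data and the local existence theory — so that the inequality $\mathcal{X}_{\kappa-2}^{\frac12}\lesssim\mathcal{E}_{\kappa-2}^{\frac12}+\delta^{\frac12}\mathcal{X}_{\kappa-2}^{\frac12}$ may legitimately be closed; the remaining steps are a direct chain of substitutions.
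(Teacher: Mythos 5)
Your proposal is correct and follows essentially the same path as the paper: apply Lemma \ref{WE-1} (to $\Gamma^\alpha Y$, $|\alpha|\le\kappa-4$ resp.\ $\kappa-2$) together with Lemma \ref{SN-2}, then absorb the small multiple of $\mathcal{X}_{\kappa-2}^{1/2}$. You usefully spell out two things the paper leaves implicit — that Lemma \ref{WE-1} is a purely kinematic inequality so it lifts verbatim to $\Gamma^\alpha Y$, and that the absorption step presupposes a priori finiteness of $\mathcal{X}_{\kappa-2}$.
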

\begin{proof}
Applying Lemma \ref{WE-1} and Lemma \ref{SN-2}, one has
\begin{eqnarray}\nonumber
\mathcal{X}_{\kappa - 2}^{\frac{1}{2}} &\lesssim& \mathcal{E}_{\kappa -
  2}^{\frac{1}{2}} + \langle t\rangle\|(\partial_t^2 -
  \Delta)\Gamma^{\leq\kappa - 4}Y\|_{L^2}\\\nonumber
&\lesssim& \mathcal{E}_{\kappa -
  2}^{\frac{1}{2}} + \mathcal{E}_{\kappa -
  2}^{\frac{1}{2}}\big(\mathcal{E}_{\kappa -
  2}^{\frac{1}{2}} + \mathcal{X}_{\kappa -
  2}^{\frac{1}{2}}\big),
\end{eqnarray}
which gives the first estimate of the lemma by noting the
assumption.

Next, applying Lemma \ref{WE-1} and Lemma \ref{SN-2} once more, one
has
\begin{eqnarray}\nonumber
\mathcal{X}_{\kappa}^{\frac{1}{2}} &\lesssim& \mathcal{E}_{
  \kappa}^{\frac{1}{2}} + \langle t\rangle\|(\partial_t^2 -
  \Delta)\Gamma^{\leq\kappa - 2}Y\|_{L^2}\\\nonumber
&\lesssim& \mathcal{E}_{\kappa}^{\frac{1}{2}} +
  \mathcal{E}_{\kappa}^{\frac{1}{2}}\big(\mathcal{E}_{\kappa -
  2}^{\frac{1}{2}} + \mathcal{X}_{\kappa -
  2}^{\frac{1}{2}}\big).
\end{eqnarray}
Then the second estimate of the lemma follows from the first one and
the assumption.
\end{proof}

\section{Higher-order Energy Estimate}\label{HEE}

This section is devoted to the higher-order generalized energy
estimate. We will see that the ghost weight method introduced by
Alinhac in \cite{Alinhac00} plays an important role.

Let $\kappa \geq 10$ and $|\alpha| \leq \kappa - 1$. Let $\sigma = t
- r$ and $q(\sigma) = \arctan\sigma$. Taking the $L^2$ inner product
of \eqref{Elasticity-D} with $e^{- q(\sigma)}\partial_t\Gamma^\alpha
Y$ and using integration by parts, we have
\begin{eqnarray}\nonumber
&&\frac{d}{dt}\int e^{- q(\sigma)}\big(|\partial_t
  \Gamma^\alpha Y|^2 + |\nabla \Gamma^\alpha Y|^2\big)dy\\\nonumber
&&= - \int \frac{e^{- q(\sigma)}}{1 + \sigma^2}\big(|\partial_t
  \Gamma^\alpha Y|^2 + |\nabla \Gamma^\alpha Y|^2\big)dy\\\nonumber
&&\quad +\ 2\int e^{- q(\sigma)}\partial_t
  \Gamma^\alpha Y\cdot(\partial_t^2 - \Delta)\Gamma^\alpha Ydy
  - 2\int\partial_j e^{- q(\sigma)}\partial_j \Gamma^\alpha
  Y\partial_t\Gamma^\alpha Y\big)dy\\\nonumber
&&= - \sum_j\int \frac{e^{- q(\sigma)}}{1 +
  \sigma^2}|(\omega_j\partial_t + \partial_j)
  \Gamma^\alpha Y|^2dy\\\nonumber
&&\quad -\ 2\int e^{- q(\sigma)}\partial_t
  \Gamma^\alpha Y\cdot\big[(\nabla X)^{- \top}\nabla\Gamma^\alpha p\big] dy\\\nonumber
&&\quad -\ 2\int e^{- q(\sigma)}\partial_t
  \Gamma^\alpha Y\cdot\sum_{\beta + \gamma
  = \alpha,\ \gamma \neq \alpha}C_\alpha^\beta(\nabla X)^{- \top}(\nabla\Gamma^\beta
  Y)^\top(\partial_t^2 - \Delta)\Gamma^\gamma Ydy,
\end{eqnarray}
which gives that
\begin{eqnarray}\label{ES-1}
&&\frac{d}{dt}\int e^{- q(\sigma)}\big(|\partial_t
  \Gamma^\alpha Y|^2 + |\nabla \Gamma^\alpha Y|^2\big)dy\\\nonumber
&&\quad +\ \sum_j\int \frac{e^{- q(\sigma)}}{1 +
  \sigma^2}|(\omega_j\partial_t + \partial_j)
  \Gamma^\alpha Y|^2dy\\\nonumber
&&\lesssim \mathcal{E}_{\kappa}^{\frac{1}{2}}
  \|(\nabla X)^{-T}\|_{L^\infty}\Big(\|\nabla\Gamma^\alpha p\|_{L^2} + \sum_{\beta + \gamma
  = \alpha,\ \gamma \neq \alpha}\|(\nabla
  \Gamma^\beta Y)^{\top}(\partial_t^2 - \Delta)\Gamma^\gamma
  Y\|_{L^2}\Big),
\end{eqnarray}
We will use the simple estimate: $\|(\nabla X)^{-T}\|_{L^\infty} \leq 4$. At the first glance, we will always lose one derivative since
$\|\nabla\Gamma^\alpha p\|_{L^2}$ contains the $|\alpha| + 2$
derivatives of $Y$. Fortunately, we may modify the proof for Lemma
\ref{SN-2} so that we have similar estimates but gain one derivative
and at the same time, lose $\langle t\rangle^{-\frac{1}{2}}$ decay
rate. Moreover, whenever we lose $\langle t\rangle^{-\frac{1}{2}}$
decay rate, we have a good derivative $\omega_j\partial_t +
\partial_j$. Then the ghost weight method of Alinhac enables us to take the advantage of
null structure of nonlinearities when we perform highest order
energy estimate. We emphasis that all of those calculations are
based on the physical structures of the system.

We still use the estimate in \eqref{F-1}, but we need refine the
last line of \eqref{F-5} as follows:
\begin{eqnarray}\label{F-6}
&&\nabla^\perp\cdot\big(\partial_j\Gamma^\gamma
  Y^2\phi^t(\omega_j\partial_t + \partial_j)\nabla
  \Gamma^\beta Y^1 + \partial_j\Gamma^\gamma Y^2\nabla\phi^t
  (\omega_j\partial_t + \partial_j)\Gamma^\beta Y^1\big)\\\nonumber
&&= \nabla^\perp\cdot\big(\partial_j\Gamma^\gamma
  Y^2\nabla[\phi^t(\omega_j\partial_t + \partial_j)
  \Gamma^\beta Y^1]\big)\\\nonumber
&&\quad +\ \nabla^\perp\cdot\big(\partial_j\Gamma^\gamma
  Y^2\phi^t\nabla\omega_j\partial_t\Gamma^\beta Y^1\big)\\\nonumber
&&= \nabla\cdot\big([\phi^t(\omega_j\partial_t + \partial_j)
  \Gamma^\beta Y^1]\nabla^\perp\partial_j\Gamma^\gamma
  Y^2\big)\\\nonumber
&&\quad +\ \nabla^\perp\cdot\big(\partial_j\Gamma^\gamma
  Y^2\phi^t\nabla\omega_j\partial_t\Gamma^\beta Y^1\big).
\end{eqnarray}
Replacing the last line  in \eqref{F-5} by \eqref{F-6}, one has
\begin{eqnarray}\label{F-7}
&&\nabla\cdot\big(\phi^t\partial_t\Gamma^\beta Y^1
  \nabla^\perp\partial_t\Gamma^\gamma Y^2 - \phi^t\partial_j\Gamma^\beta
  Y^1\nabla^\perp\partial_j\Gamma^\gamma Y^2\big)\\\nonumber
&&= \nabla\cdot\Big(\phi^t\partial_t\Gamma^\beta Y^1
  \nabla^\perp[\omega_j(\omega_j\partial_t + \partial_j)\Gamma^\gamma Y^2]\\\nonumber
&&\quad -\ \phi^t\partial_j\Gamma^\beta
  Y^1\nabla^\perp(\omega_j\partial_t + \partial_j)\Gamma^\gamma Y^2
  + \phi^t\partial_j
  \Gamma^\beta Y^1\nabla^\perp(\omega_j\partial_t +
  \partial_j)\Gamma^\gamma Y^2\\\nonumber
&&\quad -\ [\phi^t(\omega_j\partial_t + \partial_j)
  \Gamma^\beta Y^1]\nabla^\perp\partial_j\Gamma^\gamma
  Y^2\Big) - \nabla^\perp\cdot\big(\partial_j\Gamma^\gamma
  Y^2\phi^t\nabla\omega_j\partial_t\Gamma^\beta Y^1\big).
\end{eqnarray}
Using \eqref{F-7}, we may re-estimate $\Pi_1(\varphi^t)$ in
\eqref{F-1} as follows:
\begin{eqnarray}\nonumber
&&\big\|(- \Delta)^{-
  \frac{1}{2}}\nabla\cdot\big[\varphi^t\big(\partial_t
   \Gamma^\beta Y^1\nabla^\perp\partial_t\Gamma^\gamma
   Y^2 - \partial_j\Gamma^\beta
  Y^1\nabla^\perp\partial_j\Gamma^\gamma Y^2\big)
  \big]\big\|_{L^2}\\\nonumber
&&\lesssim \|D\Gamma^\beta Y\|_{L^2}\|1_{{\rm supp}\varphi^t}
  (\omega_j\partial_t + \partial_j)\nabla\Gamma^\gamma
  Y\|_{L^\infty}\\\nonumber
&&\quad +\ \langle t\rangle^{- \frac{1}{2}}\|1_{{\rm
  supp}\varphi^t}\langle r\rangle^{\frac{1}{2}}\langle t - r\rangle
  D^2\Gamma^\gamma Y\|_{L^\infty}\|\langle t - r\rangle^{-1}
  (\omega_j\partial_t + \partial_j)
  \Gamma^\beta Y\|_{L^2}\\\nonumber
&&\quad +\ \langle t\rangle^{-1}\|1_{{\rm
  supp}\varphi^t}D\Gamma^\gamma
  Y\|_{L^\infty}\|D\Gamma^\beta Y\|_{L^2}.
\end{eqnarray}
Again, the last term in the above inequality is due to the commutation of $\nabla$ with good derivatives. Using Lemma \ref{GoodDeri}  and Lemma \ref{DecayES}, the above is
bounded by
\begin{eqnarray}\nonumber
&&\langle t\rangle^{-1}\|D\Gamma^\beta Y\|_{L^2}\big(
  \|1_{{\rm supp}\varphi^t}\langle t\rangle (\partial_t^2 - \Delta)
  \Gamma^\gamma Y\|_{L^\infty} + \|1_{{\rm supp}\varphi^t}D\Gamma^{\leq
  |\gamma| + 1} Y\|_{L^\infty}\big)\\\nonumber
&&\quad +\ \langle t\rangle^{-\frac{1}{2}}
  \big(\mathcal{X}_{|\gamma| + 4} +
  \mathcal{E}_{|\gamma| + 3}\big)^{\frac{1}{2}}\|\langle t - r\rangle^{-1}
  (\omega_j\partial_t + \partial_j)
  \Gamma^\beta Y\|_{L^2}\\\nonumber
&&\quad +\ \langle t\rangle^{- 1}\|D\Gamma^\gamma
  Y\|_{L^\infty}\|D\Gamma^\beta Y\|_{L^2}.
\end{eqnarray}
Notice that $|\gamma| \leq [|\alpha|/2]$. We further use Lemma
\ref{DecayEF} (again, we need modify the proof slightly by adding a
cutoff function $\varphi^t$ to keep the wave operator $\partial_t^2
- \Delta$) and Lemma \ref{WE-2} to bound the above by
\begin{eqnarray}\nonumber
&&\langle t\rangle^{-\frac{1}{2}}
  \mathcal{E}_{|\gamma| + 4}^{\frac{1}{2}}\|\langle t - r\rangle^{-1}
  (\omega_j\partial_t + \partial_j)
  \Gamma^\beta Y\|_{L^2}\\\nonumber
&&\quad +\ \langle t\rangle^{- \frac{3}{2}}\mathcal{E}_{|\alpha|
  + 1}^{\frac{1}{2}}\|\langle t\rangle (\partial_t^2 - \Delta)
  \Gamma^{\leq[|\alpha|/2] + 2} Y\|_{L^2}+ \langle t\rangle^{- \frac{3}{2}}\mathcal{E}_{[|\alpha|/2] +
  4}^{\frac{1}{2}}\mathcal{E}_{|\alpha| + 1}^{\frac{1}{2}}.
\end{eqnarray}
Since $[|\alpha|/2] + 4 \leq \kappa - 2$, we can use Lemma
\ref{SN-2} and Lemma \ref{WE-2} to bound the above quantities by
\begin{eqnarray}\label{F-8}
\langle t\rangle^{-\frac{1}{2}}
  \mathcal{E}_{\kappa -
  2}^{\frac{1}{2}}\|\langle t - r\rangle^{-1}
  (\omega_j\partial_t + \partial_j)
  \Gamma^\beta Y\|_{L^2} + \langle t\rangle^{- \frac{3}{2}}\mathcal{E}_{\kappa -
  2}^{\frac{1}{2}}\mathcal{E}_{\kappa}^{\frac{1}{2}}.
\end{eqnarray}

Similarly, the last line in \eqref{F-1} can also be bounded by the
quantity in \eqref{F-8}. Consequently, we can derive by inserting
\eqref{F-8} into \eqref{F-1} that
\begin{eqnarray}\nonumber
&&\|\nabla\Gamma^\alpha p\|_{L^2} \lesssim \sum_{\beta + \gamma =
\alpha,\
  \gamma \neq \alpha}\big\||\nabla\Gamma^\beta Y||(\partial_t^2
   - \Delta)\Gamma^\gamma Y|\big\|_{L^2}\\\nonumber
&&\quad +\ \langle t\rangle^{-\frac{1}{2}}
  \mathcal{E}_{\kappa -
  2}^{\frac{1}{2}}\|\langle t - r\rangle^{-1}
  (\omega_j\partial_t + \partial_j)
  \Gamma^\beta Y\|_{L^2} + \langle t\rangle^{- 1}\mathcal{E}_{\kappa -
  2}^{\frac{1}{2}}\mathcal{E}_{\kappa}^{\frac{1}{2}}.
\end{eqnarray}
Inserting the above estimate into \eqref{ES-1}, we have
\begin{eqnarray}\label{ES-3}
&&\frac{d}{dt}\int e^{- q(\sigma)}\big(|\partial_t
  \Gamma^\alpha Y|^2 + |\nabla \Gamma^\alpha Y|^2\big)dy\\\nonumber
&&\quad +\ \int \frac{e^{- q(\sigma)}}{1 + \sigma^2}\big(|\omega\partial_t
  \Gamma^\alpha Y + \nabla \Gamma^\alpha Y|^2\big)dy\\\nonumber
&&\lesssim \mathcal{E}_{\kappa}^{\frac{1}{2}}\sum_{\beta + \gamma
  = \alpha,\ \gamma \neq \alpha}\|(\nabla
  \Gamma^\beta Y)^{\top}(\partial_t^2 - \Delta)\Gamma^\gamma
  Y\|_{L^2}\\\nonumber
&&\quad +\ \langle t\rangle^{-\frac{1}{2}}
  \mathcal{E}_{\kappa - 2}^{\frac{1}{2}}\mathcal{E}_{\kappa}^{\frac{1}{2}}
  \|\langle t - r\rangle^{-1}(\omega_j\partial_t + \partial_j)
  \Gamma^\beta Y\|_{L^2} + \langle t\rangle^{- 1}\mathcal{E}_{\kappa -
  2}^{\frac{1}{2}}\mathcal{E}_{\kappa}.
\end{eqnarray}

Now let us estimate the remaining terms in \eqref{ES-3}. Using Lemma
\ref{DecayEF} and Lemma \ref{SN-2}, it is easy to derive that
\begin{eqnarray}\nonumber
&&\sum_{\beta + \gamma
  = \alpha,\ \gamma \neq \alpha}\|(\nabla
  \Gamma^\beta Y)^{\top}(\partial_t^2 - \Delta)\Gamma^\gamma
  Y\|_{L^2}\\\nonumber
&&\lesssim \sum_{\beta + \gamma
  = \alpha,\ |\gamma| \leq [|\alpha|/2]}\|\nabla
  \Gamma^\beta Y\|_{L^2}\|(\partial_t^2 - \Delta)\Gamma^\gamma
  Y\|_{L^\infty}\\\nonumber
&&\quad +\ \sum_{\beta + \gamma
  = \alpha,\ |\beta| \leq [|\alpha|/2], \gamma \neq \alpha}\|\nabla
  \Gamma^\beta Y\|_{L^\infty}\|(\partial_t^2 - \Delta)\Gamma^\gamma
  Y\|_{L^2}\\\nonumber
&&\lesssim \mathcal{E}_{\kappa}^{\frac{1}{2}}\|(\partial_t^2 -
  \Delta)\Gamma^{\leq\kappa - 4}Y\|_{L^2} + \langle t\rangle^{-
  \frac{1}{2}}\mathcal{E}_{\kappa - 2}^{\frac{1}{2}}\|(\partial_t^2 -
  \Delta)\Gamma^{\leq\kappa - 2}Y\|_{L^2}\\\nonumber
&&\lesssim \langle t\rangle^{-
  1}\mathcal{E}_{\kappa}^{\frac{1}{2}}\mathcal{E}_{\kappa - 2}.
\end{eqnarray}
Inserting the above estimates into \eqref{ES-3} and using Cauchy
inequality, we have
\begin{eqnarray}\label{ES-2}
\frac{d}{dt}\sum_{|\alpha| \leq \kappa - 1}\int e^{-
q(\sigma)}\big(|\partial_t \Gamma^\alpha Y|^2 + |\nabla
\Gamma^\alpha Y|^2\big)dy \lesssim \langle
t\rangle^{-1}\mathcal{E}_{\kappa} \mathcal{E}_{\kappa -
2}^{\frac{1}{2}}.
\end{eqnarray}
Here we used $\mathcal{E}_{\kappa -
2} \leq 1$. This gives the first differential inequality \eqref{C1} at the end of section
\ref{Equations}.

\section{Lower-order Energy Estimate}\label{LEE}

In this section we perform the lower order energy estimate. Let
$|\alpha| \leq \kappa - 3$. We rewrite \eqref{Elasticity-D} as
\begin{eqnarray}\nonumber
&&(\nabla X)^{\top}(\partial_t^2 - \Delta) \Gamma^\alpha
  Y + \nabla\Gamma^\alpha p\\\nonumber
&&= - \sum_{\beta + \gamma = \alpha,\ \gamma \neq \alpha}C_\alpha^\beta
  (\nabla \Gamma^\beta Y)^{\top}(\partial_t^2 -
   \Delta)\Gamma^\gamma Y.
\end{eqnarray}
Applying the curl operator to the above equation, one has
\begin{eqnarray}\nonumber
(\partial_t^2 - \Delta)\nabla^\perp\cdot
  \Gamma^\alpha Y &=& - \sum_{\beta + \gamma = \alpha,\ \gamma \neq \alpha}C_\alpha^\beta\big\{
  \nabla^\perp\cdot\big[(\nabla \Gamma^\beta
  Y)^{\top}(\partial_t^2 - \Delta)\Gamma^\gamma Y\big]\\\nonumber
&&-\ \nabla^\perp\cdot\big[(\nabla
  Y)^{\top}(\partial_t^2 - \Delta)\Gamma^\alpha Y\big]\big\}.
\end{eqnarray}
Consequently, we have
\begin{eqnarray}\label{Elasticity-L}
&&(\partial_t^2 - \Delta)(- \Delta)^{- \frac{1}{2}}\nabla^\perp\cdot
  \Gamma^\alpha Y\\\nonumber
&&=  \sum_{\beta + \gamma = \alpha}C_\alpha^\beta
  (- \Delta)^{- \frac{1}{2}}\nabla^\perp\cdot\big[(\nabla \Gamma^\beta
  Y)^{\top}(\partial_t^2 - \Delta)\Gamma^\gamma Y\big].
\end{eqnarray}

Multiplying \eqref{Elasticity-L} by $\partial_t(- \Delta)^{-
\frac{1}{2}}\nabla^\perp\cdot
  \Gamma^\alpha Y$ and then integrating over $\mathbb{R}^2$, one has
\begin{eqnarray}\nonumber
&&\frac{1}{2}\frac{d}{dt}\int\big(\big|\partial_t(- \Delta)^{-
  \frac{1}{2}}\nabla^\perp\cdot\Gamma^\alpha Y\big|^2
  + \big|\nabla(- \Delta)^{-\frac{1}{2}}\nabla^\perp\cdot\Gamma^\alpha
  Y\big|^2\big)dy\\\nonumber
&&\lesssim \sum_{\beta + \gamma = \alpha}
  \big\|\partial_t(- \Delta)^{- \frac{1}{2}}\nabla^\perp\cdot
  \Gamma^\alpha Y\big\|_{L^2}\big\||\nabla \Gamma^\beta
  Y||(\partial_t^2 - \Delta)\Gamma^\gamma Y|\big\|_{L^2}\\\nonumber
&&\lesssim \mathcal{E}_{\kappa - 2}^{\frac{1}{2}}\sum_{\beta +
  \gamma = \alpha}\big\||\nabla \Gamma^\beta
  Y||(\partial_t^2 - \Delta)\Gamma^\gamma Y|\big\|_{L^2}.
\end{eqnarray}
Let us first use Lemma \ref{DecayEF}, Lemma \ref{SN-2} and Lemma
\ref{WE-2} to estimate that
\begin{eqnarray}\nonumber
&&\sum_{\beta + \gamma = \alpha,\ |\beta| \leq [|\alpha|/2]
  }\big\||\nabla\Gamma^\beta Y||(\partial_t^2 - \Delta)
  \Gamma^\gamma Y|\big\|_{L^2}\\\nonumber
&&\quad\quad \quad \lesssim \|\nabla\Gamma^{\leq[|\alpha|/2]}
  Y\|_{L^\infty}\|(\partial_t^2 - \Delta)
  \Gamma^{\leq|\alpha|} Y\|_{L^2}\\\nonumber
&&\quad\quad \quad \lesssim \langle t\rangle^{-
3/2}\mathcal{E}_{\kappa -
  2}^{\frac{1}{2}}\|\langle t\rangle(\partial_t^2 - \Delta)
  \Gamma^{\leq\kappa - 2} Y\|_{L^2}\\\nonumber
&&\quad\quad \quad \lesssim \langle t\rangle^{-
3/2}\mathcal{E}_{\kappa -
  2}\mathcal{E}_{\kappa}^{\frac{1}{2}}.
\end{eqnarray}
Similarly, by $[|\alpha|/2] + 4 \leq \kappa - 2$, one also has
\begin{eqnarray}\nonumber
&&\sum_{\beta + \gamma = \alpha,\ |\gamma| \leq
  [|\alpha|/2]}\big\||\nabla
  \Gamma^\beta Y||(\partial_t^2 - \Delta)
  \Gamma^\gamma Y|\big\|_{L^2}\\\nonumber
&&\quad\quad \quad \lesssim \|(1 - \varphi^t)\nabla \Gamma^{\leq\kappa -
  3}Y\|_{L^\infty}\|(1 - \varphi^t)(\partial_t^2 - \Delta)
  \Gamma^{\leq[|\alpha|/2]} Y\|_{L^2}\\\nonumber
&&\quad\quad \quad \quad +\ \|\varphi^t\nabla \Gamma^{\leq\kappa - 3}
  Y\|_{L^2}\|\varphi^t(\partial_t^2 - \Delta)
  \Gamma^{\leq[|\alpha|/2]} Y\|_{L^\infty}\\\nonumber
&&\quad\quad \quad \lesssim \langle t\rangle^{- 3/2}
  \mathcal{E}_{\kappa}^{\frac{1}{2}}
  \|\langle t\rangle(1 - \varphi^t)(\partial_t^2 -
  \Delta)\Gamma^{\leq[|\alpha|/2]} Y\|_{L^2}\\\nonumber
&&\quad\quad \quad \quad +\ \langle t\rangle^{- 3/2}
  \mathcal{E}_{\kappa - 2}^{\frac{1}{2}}\|\langle t\rangle\varphi^t(\partial_t^2 - \Delta)
  \Gamma^{\leq[|\alpha|/2] + 2} Y\|_{L^2}\\\nonumber
&&\quad\quad \quad \lesssim \langle t\rangle^{- 3/2}
  \mathcal{E}_{\kappa - 2}\mathcal{E}_{\kappa}^{\frac{1}{2}}.
\end{eqnarray}
Hence, we have
\begin{eqnarray}\label{LOEE-1}
\frac{d}{dt}\sum_{|\alpha| \leq \kappa - 3}\int\big|D(- \Delta)^{-
\frac{1}{2}}\nabla^\perp\cdot\Gamma^\alpha Y\big|^2dy \lesssim
\langle t\rangle^{- 3/2}\mathcal{E}_{\kappa}^{\frac{1}{2}}
\mathcal{E}_{\kappa - 2}.
\end{eqnarray}

Now let us estimate $(- \Delta)^{- \frac{1}{2}}\nabla\cdot
D\Gamma^\alpha Y$. Using \eqref{Struc-2}, one has
\begin{eqnarray}\nonumber
&&(- \Delta)^{- \frac{1}{2}}\nabla\cdot D\Gamma^\alpha Y\\\nonumber
&&= (- \Delta)^{- \frac{1}{2}}D\sum_{\beta + \gamma = \alpha}C_\alpha^\beta
  \big[\partial_1\Gamma^\beta  Y^2\partial_2\Gamma^\gamma  Y^1
  - \partial_1\Gamma^\gamma  Y^1\partial_2\Gamma^\beta  Y^2\big]\\\nonumber
&&= (- \Delta)^{- \frac{1}{2}}\sum_{\beta + \gamma = \alpha}C_\alpha^\beta
  \big[\partial_1D\Gamma^\beta  Y^2\partial_2\Gamma^\gamma  Y^1
  - \partial_1\Gamma^\gamma  Y^1\partial_2D\Gamma^\beta  Y^2\big]\\\nonumber
&&\quad +\ (- \Delta)^{- \frac{1}{2}}\sum_{\beta + \gamma = \alpha}C_\alpha^\beta
  \big[\partial_1\Gamma^\beta  Y^2\partial_2D\Gamma^\gamma  Y^1
  - \partial_1D\Gamma^\gamma  Y^1\partial_2\Gamma^\beta  Y^2\big]\\\nonumber
&&= (- \Delta)^{- \frac{1}{2}}\nabla^\perp\cdot\sum_{\beta + \gamma
  = \alpha}C_\alpha^\beta\big[D\Gamma^\beta  Y^2\nabla\Gamma^\gamma  Y^1
  - \nabla\Gamma^\beta  Y^2D\Gamma^\gamma  Y^1\big].
\end{eqnarray}
Hence, one has
\begin{eqnarray}\label{B-3}
\|(- \Delta)^{- \frac{1}{2}}\nabla\cdot D\Gamma^\alpha
  Y\|_{L^2} &\lesssim& \sum_{\beta + \gamma = \alpha}\big\||D\Gamma^\beta
  Y||D\Gamma^\gamma Y|\big\|_{L^2}\\\nonumber
&\lesssim& \|D\Gamma^{\leq|\alpha|}
  Y\|_{L^2}\|D\Gamma^{\leq[|\alpha|/2]}\|_{L^\infty}\\\nonumber
&\lesssim& \mathcal{E}_{\kappa - 2}.
\end{eqnarray}
Hence, we see that $\big\|(- \Delta)^{-
\frac{1}{2}}\nabla^\perp\cdot D\Gamma^{\leq\kappa - 3}Y\big\|_{L^2}$ is
equivalent of $\big\|D\Gamma^{\leq\kappa - 3}Y\big\|_{L^2}$ since
\begin{eqnarray}\nonumber
\Big|\big\|D\Gamma^{\leq\kappa - 3}Y\big\|_{L^2}^2 - \big\|(- \Delta)^{-
\frac{1}{2}}\nabla^\perp\cdot D\Gamma^{\leq\kappa -
3}Y\big\|_{L^2}^2\Big| \lesssim \mathcal{E}_{\kappa - 3}^2 \lesssim
\epsilon^2\mathcal{E}_{\kappa - 2}.
\end{eqnarray}
Then we can replace all $\mathcal{E}_{\kappa - 2}$ appeared
throughout this paper by $\big\|(- \Delta)^{-
\frac{1}{2}}\nabla^\perp\cdot D\Gamma^{\leq\kappa - 3}Y\big\|_{L^2}^2$
without changing the final result. Then \eqref{LOEE-1} gives the
second differential inequality \eqref{C2} at the end of section
\ref{Equations}.

\section*{Appendix A}

In this appendix we explain how to obtain \eqref{Elasticity-D} and \eqref{Struc-2}. Let
$$\Gamma_i \in \{\partial_t, \partial_1, \partial_2, \widetilde{\Omega}, \widetilde{S}\},\ i = 1, 2, \cdots, 5.$$
Recall that we have defined the scaling and rotation groups in section 2 so that their generators are $\widetilde{S}$ and $\widetilde{\Omega}$. Similarly, we can define translation groups so that their generators are $\partial_t$, $\partial_1$ and $\partial_2$.

Now for each multi-index $\alpha$ and
$$\Gamma^\alpha = \Gamma_1^{\alpha_1}\cdots \widetilde{S}^{\alpha_j}\cdots\Gamma_5^{\alpha_5},$$ we can naturally define the group $T_{\alpha}$ such that
\begin{equation}\label{F1}
\frac{d^{\alpha_1}}{d\lambda_1^{\alpha_1}}\cdots\frac{d^{\alpha_5}}{d\lambda_5^{\alpha_5}} T_\alpha X\big|_{(\lambda_1,\cdots, \lambda_5) = e_j} = \Gamma^\alpha X.\end{equation}
Here $e_j$ is the unit vector in $\mathbb{R}^5$ whose $j$-th component 1. Indeed, $T_\alpha X$ can be defined as follows:
$$T_\alpha X = (T_{\lambda_1})^{\alpha_1}\cdots (T_{\lambda_5})^{\alpha_5}X,$$
where each group $T_{\lambda_j}$ has a generator $\Gamma_j$. Similar definition is applied to $T_\alpha p$.

Due to the invariance property of the system, one has the fact that $(T_\alpha X, T_\alpha p)$ is still a solution of the system \eqref{B-9}. Consequently, we have
\begin{equation}\nonumber
\begin{cases}
(\nabla T_\alpha X)^{\top}(\partial_t^2T_\alpha Y - \Delta T_\alpha Y) = -\nabla T_\alpha p,\\[-4mm]\\
\nabla\cdot T_\alpha Y = - \det(\nabla T_\alpha Y).
\end{cases}
\end{equation}
Clearly, differentiating the above equations with respect to $\lambda_j$'s and then use \eqref{F1}, one deduces \eqref{Elasticity-D} and \eqref{Struc-2}.

\section*{Acknowledgement.}

The author would like to thank Prof. Jean Bourgain, Prof. Thomas
Sideris and Prof. Yi Zhou for many stimulating discussions, and thank the anonymous referees for constructive suggestions and Dr. Qingtian Zhang from Penn State for checking the English writing. He also
would like to thank Prof. Weinan E, Prof. Jiaxing Hong and Prof.
Fanghua Lin for their long-standing supports  and encouragements
during the past several years when the author was working on this
problem.  Part of this work was carried out when the author was a
member of IAS during the spring of 2014. He would like to thank the
hospitality of the Institute. His work was partially supported by
the S. S. Chern Foundation for Mathematics Research Fund and the
Charles Simonyi Endowment via the IAS. The author was also in part
supported by NSFC (grant No. 11421061 and 11222107), National Support Program for Young Top-Notch Talents, Shanghai Shu Guang project,
Shanghai Talent Development Fund and
SGST 09DZ2272900.

%-----------------------------------------------------------------------------bibliography

\end{document}